\documentclass[reqno,11pt]{amsart}

\usepackage{amsmath,amsfonts,amssymb,amsthm,epsfig,enumerate}
\usepackage{xcolor}
\usepackage[english]{babel}
\usepackage{hyperref} 
\voffset=-1.5cm \textheight=23cm \hoffset=-.5cm \textwidth=16cm
\oddsidemargin=1cm \evensidemargin=-.1cm
\footskip=35pt \linespread{1.25}
\parindent=20pt

\sloppy \allowdisplaybreaks
\numberwithin{equation}{section}

\font\script=rsfs10 at 11pt
\def\eps{\varepsilon}
\def\H{{\mbox{\script H}\,\,}}

\def\E{\mathcal E}
\def\F{\mathcal F}
\def\G{\mathcal G}

\def\Fot{F_j^{\rm \,other}}
\def\R{\mathbb R}
\def\S{\mathbb S}
\def\N{\mathbb N}
\def\diam{{\rm diam}}
\def\jm{{j_{\rm max}}}
\def\tv#1{\big| |#1|\big|}
\def\bbeps{\overline{\overline\eps}}
\def\bal{\begin{aligned}}
\def\eal{\end{aligned}}
\def\proofof#1{\begin{proof}[Proof of #1]}
\def\Chi#1{\hbox{{\large $\chi$}{\Large $_{_{#1}}$}}}
\def\step#1#2{\par\noindent{\underline{\it Step~#1.}}\emph{ #2}\\}
\def\comp{\subset\subset}


\newcounter{mt}
\def\maintheorem#1#2#3{\par \medskip \noindent {\bf Theorem~\mref{#1}}~(#2).~{\it #3}\par}
\def\mref#1{\Alph{#1}}
\def\maintheoremdeclaration#1{\stepcounter{mt}\newcounter{#1}\setcounter{#1}{\arabic{mt}}}

\maintheoremdeclaration{main1}
\maintheoremdeclaration{main2}

\newtheorem{theorem}{Theorem}[section]
\newtheorem{lemma}[theorem]{Lemma}
\newtheorem{prop}[theorem]{Proposition}

\newtheorem{defin}[theorem]{Definition}
\newtheorem{remark}[theorem]{Remark}

\begin{document}

\title[Small minimal clusters in manifolds]{Connectedness properties of small minimal clusters in Riemannian or Finsler manifolds}

\author[S.~Nardulli]{S.~Nardulli}
\address[S.~Nardulli]{Universidade Federal do ABC, Centro de Matem\'atica, Computa\c c\~ao e Cogni\c c\~ao, Avenida dos Estados 5001, S\~ao Paulo, Brasil}
\email{stefano.nardulli@ufabc.edu.br}
\author[A.~Pratelli]{A.~Pratelli}
\address[A.~Pratelli]{Dipartimento di Matematica, Universit\`a di Pisa, Largo B. Pontecorvo 5, 56127 Pisa, Italy}
\email{aldo.pratelli@unipi.it}

\begin{abstract}
We prove that in a compact Riemannian manifold, the $m$-minimal clusters of sufficiently small total volume are connected and with small diameter, while in a more general Finsler manifold they are done by at most $m$ connected components of small diameter. We apply these results to calculate the asymptotic expansion of the multi-isoperimetric profile at the first nontrivial order, for small volumes.
\end{abstract}

\maketitle

\section{Introduction}

We are interested in the isoperimetric problem for clusters in a Riemannian or a Finsler manifold, in particular in the connectedness properties of small minimal clusters. In fact, while it is clear that a generic minimal cluster could be made by an arbitrary number of connected components, the situation changes when considering minimal clusters of \emph{small} volume. Indeed, since the manifold locally looks like $\R^N$ with a fixed convex norm (and in particular, the norm is the Euclidean one if the manifold is of Riemannian type), one can guess that minimal clusters of small volume behave more or less like minimal clusters in situations with a fixed norm, which are well known to be connected. The situation for isoperimetric sets, which correspond to $m$-clusters when $m=1$, is already quite known, see for instance~\cite{FigalliMaggi2011,derosa2023localminimizersanisotropicisoperimetric,NardulliAGAG09,MR1803220} and the references therein. What we are able to prove is that minimal $m$-clusters of small volume are actually connected in the case of a compact, Riemannian manifold, and they are made by at most $m$ connected components for a compact, Finsler manifold. This is the content of our main results below.

\maintheorem{main1}{Connectedness: the Finsler case}{
Let $M$ be a $N$-dimensional, ${\rm C}^2$, compact Finsler manifold (not necessarily reversible), and let $m\in\N$ be given. There exist two constants $\bar\eps>0$ and $K>0$ such that any minimal $m$-cluster $\E$ with total volume $\tv{\E}<\bar\eps$ is made by at most $m$ connected components $\E_i$. In addition, each connected component satisfies
\begin{equation}\label{bounddiam}
\diam(\E_i)\leq K \tv{\E_i}^{1/N}\,.
\end{equation}}

Concerning the definition of ``diameter'' of a connected component of a cluster, see Remark~\ref{whatisdiam}. In fact, there are more than one meaningful definition of diameter, but by compactness they are all equivalent, so the estimate~(\ref{bounddiam}) is true with any of them, up to modify the constant $K$.

\maintheorem{main2}{Connectedness: the Riemannian case}{
Let $M$ be a $N$-dimensional, ${\rm C}^2$, compact Riemannian manifold, and let $m\in\N$ be given. There exists a constant $\bbeps>0$ such that any minimal cluster $\E$ with total volume $\tv\E<\bbeps$ is connected.
}

In fact, what we will prove is slightly stronger. Indeed, the claim is true not only if the manifold is Riemannian, but also if it is ``almost Riemannian''; this means that the infinitesimal norm at each point is not necessarily exactly the Euclidean one, but it is close enough to it. The precise definitions and the proof of this stronger result are contained in the final Section~\ref{afn}. In that section we will also discuss the possibility to further extend Theorem~\mref{main2}; in fact, one could think that the result might be true for any compact Finsler manifold, which would then make Theorem~\mref{main1} useless, but in fact the claim is false in this generality. Instead, it is reasonable to guess the validity of the claim for all the ``fixed norm manifolds'', see Definition~\ref{fnm}.

This result is particularly valuable, as it represents, to the best of our knowledge, the first instance in the literature where the global problem for clusters of small volumes can be rigorously reduced to a local one. It thereby provides a formal proof of a widely expected phenomenon: namely, that isoperimetric clusters of small volumes are small perturbations of the isoperimetric clusters for the corresponding infinitesimal problem in the tangent space. This uniform control of the diameter of an isoperimetric cluster in terms of its volume and the geometric bounds of the ambient manifold pave the way for extending the well-established theory in the case $m = 1 $ to the general case of arbitrary $m \in \N\setminus\{0\} $.

We believe that the ideas developed in this work are also useful for proving deep results that are genuinely new even in the case $m = 1$, with potential applications in the theory of existence and multiplicity of solutions to various phase transition models arising in realistic physical contexts (see~\cite{andradeponciano2024,andraderesende2024} for comparisons). A first, simple and direct consequence of our results, is that the (multi)-isoperimetric profile in a compact Riemannian manifold is asymptotically close to that of $\R^N$ with the standard Euclidean metric for small total volumes. This result --which is highly expected-- generalizes the classical result by B\`erard and Meyer~\cite{MR690651} relative to the case $m = 1$. For $m = 2$, this was recently established in~\cite[Corollary 3.28]{andraderesende2024} using the full resolution of the double bubble conjecture. In contrast, we can now immediately obtain the completely general case of any $m$ as a simple consequence of Theorems~\mref{main1} and~\mref{main2}, even in the Finsler case.

\begin{theorem}[Asymptotic expansion of the multi-isoperimetric profile for small volumes]\label{AsymptoticTheorem}
Let $M$ be a $N$-dimensional, ${\rm C}^2$, compact Riemannian manifold, and let $m\in\N$ be given. Then
\begin{equation}\label{eq12}
\frac{J_M(v)}{J_{\R^N}(v)} \to 1\qquad \hbox{as}\ |v|\to 0\,,
\end{equation}
where $J_M$ and $J_{\R^N}$ denote the multi-isoperimetric profiles of $M$ and of $\R^N$ with the Euclidean metric, and for any vector $v\in(\R^+)^m$ we denote by $|v|=v_1 + v_2 + \cdots + v_m$ the $L^1$ norm, which is the ``total volume'' of an $m$-cluster whose volume is $v$. More in general, if $M$ is a compact Finsler manifold, then the following expansion holds,
\begin{equation}\label{eq12bis}
\frac 1{C_2^-} \leq \liminf \frac{J_M(v)}{J_{\R^N}(v)}\leq \limsup \frac{J_M(v)}{J_{\R^N}(v)} \leq C_2^+ \qquad \hbox{as}\ |v|\to 0\,,
\end{equation}
where the constants $C_2^-$ and $C_2^+$ are defined in~(\ref{defC_2^-}) and~(\ref{defC_2^+}) respectively. Equation~(\ref{eq12bis}) reduces to~(\ref{eq12}) in the Riemannian case, since in that case $C_2^-=C_2^+=1$.
\end{theorem}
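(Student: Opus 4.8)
The plan is to sandwich $J_M(v)$ between $(1-o(1))$ and $(1+o(1))$ times $J_{\R^N}(v)$ as $|v|\to0$ (inserting the constants $C_2^\pm$ in the Finsler case). Two facts will be used throughout. First, by the $\mathrm C^2$ regularity of $M$ and its compactness there is a modulus $\omega(r)\to0$ such that, in normal coordinates around \emph{any} $p\in M$, the metric on the geodesic ball $B_r(p)$ is $(1+\omega(r))$-bi-Lipschitz to the Euclidean one and, in the Finsler case, the infinitesimal norm on $B_r(p)$ is within a factor $1+\omega(r)$ of the frozen norm $F(p,\cdot)$; hence the volume of each chamber and the (weighted) perimeter of a cluster sitting inside $B_r(p)$ are distorted by at most $1+\omega(r)$ when read through the chart. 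Second, $J_{\R^N}$ is homogeneous, $J_{\R^N}(tv)=t^{(N-1)/N}J_{\R^N}(v)$, and it is continuous and bounded between two positive constants on the compact simplex $\{w\in(\R^+)^m:\ |w|=1\}$; combining these, whenever $v'=(1+o(1))v$ componentwise one has $J_{\R^N}(v')=(1+o(1))J_{\R^N}(v)$, and this is what makes every error term uniform as $|v|\to0$.

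\textbf{Upper bound.} Given $v$ with $|v|$ small, I would fix $p\in M$ with normal coordinates on $B_\rho(p)$, $\rho$ below the injectivity radius, take a minimal $m$-cluster $\F$ of volume $v$ in $\R^N$ (which satisfies $\diam(\F)\le c|v|^{1/N}$ by scaling, hence fits inside the chart once $|v|$ is small), and transplant $\F$ to $M$ through the chart. This produces an $m$-cluster with volume $(1+o(1))v$ and $M$-perimeter at most $(1+o(1))\,\mathcal P_{\R^N}(\F)=(1+o(1))\,J_{\R^N}(v)$; correcting the volume mismatch by a cheap local modification (whose perimeter cost is $o(J_{\R^N}(v))$) gives $J_M(v)\le(1+o(1))J_{\R^N}(v)$. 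In the Finsler case $\F$ should instead be taken to minimise the $F(p,\cdot)$-anisotropic cluster functional and $p$ chosen so as to make that minimal value smallest; the same comparison then yields $J_M(v)\le(1+o(1))\,C_2^+\,J_{\R^N}(v)$, with $C_2^+$ of~(\ref{defC_2^+}) measuring the largest ratio, over directions of $v$, between that best anisotropic profile and the Euclidean one.

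\textbf{Lower bound.} This is where Theorems~\mref{main1} and~\mref{main2} are used. Let $\E$ be a minimal $m$-cluster in $M$ with $|v|$ small. In the Riemannian case Theorem~\mref{main2} makes $\E$ connected and Theorem~\mref{main1} (applicable since a Riemannian manifold is in particular Finsler) gives $\diam(\E)\le K|v|^{1/N}$; in the Finsler case Theorem~\mref{main1} gives at most $m$ connected components $\E_i$ with $\diam(\E_i)\le K\tv{\E_i}^{1/N}\le K|v|^{1/N}$. I would enclose each $\E_i$ in a geodesic ball of radius $O(|v|^{1/N})$, read it through normal coordinates to get a Euclidean cluster $\F_i$ with volume $(1+o(1))$-close to that of $\E_i$ and $\mathcal P_{\R^N}(\F_i)\le(1+o(1))\,\mathcal P_M(\E_i)$, and finally place the $\F_i$ far apart in $\R^N$, identifying equal chambers, obtaining a single $m$-cluster $\F$ of volume $(1+o(1))v$ with $\mathcal P_{\R^N}(\F)=\sum_i\mathcal P_{\R^N}(\F_i)$. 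Then $J_M(v)=\mathcal P_M(\E)=\sum_i\mathcal P_M(\E_i)\ge(1-o(1))\mathcal P_{\R^N}(\F)\ge(1-o(1))J_{\R^N}((1+o(1))v)=(1-o(1))J_{\R^N}(v)$. In the Finsler case the pieces $\F_i$ carry the distinct frozen norms $F(p_i,\cdot)$ and cannot be merged into one Euclidean cluster, so one bounds $\sum_i\mathcal P_{F(p_i,\cdot)}(\F_i)$ from below by $(C_2^-)^{-1}J_{\R^N}(v)$, where $C_2^-$ of~(\ref{defC_2^-}) quantifies the worst ratio, over directions, between the Euclidean profile and the cheapest way of distributing a prescribed volume vector among at most $m$ clusters placed in (possibly different) infinitesimal norms of $M$.

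\textbf{Conclusion and main obstacle.} Combining the two bounds gives~(\ref{eq12}) in the Riemannian case and~(\ref{eq12bis}) in the Finsler one; and when $M$ is Riemannian the infinitesimal norm $F(p,\cdot)$ is Euclidean at every $p$, so all the anisotropic profiles above coincide with the Euclidean one, forcing $C_2^-=C_2^+=1$ and making~(\ref{eq12bis}) collapse to~(\ref{eq12}). The only genuine difficulty is the lower bound — or rather the fact that it would be vacuous without an \emph{a priori} confinement of the minimal cluster. This confinement is exactly what Theorems~\mref{main1} and~\mref{main2} provide: without the estimate $\diam(\E_i)\le K|v|^{1/N}$ a minimal $M$-cluster could spread over a region on which the metric oscillates by an amount bounded away from zero, and no $1+o(1)$ comparison with a fixed-norm model would be available. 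Everything else — uniformity of $\omega$ over the compact $M$, uniform continuity of $J_{\R^N}$ on the simplex (and of the anisotropic profiles over the compact family of infinitesimal norms), and the cheap volume corrections — is routine bookkeeping.
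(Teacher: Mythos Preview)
Your argument is correct and follows the same two-sided comparison as the paper, but you are invoking more than is needed, and your reading of the constants $C_2^\pm$ is off.

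The paper explicitly observes that Theorem~\ref{AsymptoticTheorem} is a consequence of Theorem~\ref{smalldiam} (the diameter bound on each connected component) together with Lemma~\ref{fmcc} (finiteness of the number of components), \emph{without} using Theorems~\mref{main1} or~\mref{main2}. Your lower bound only really uses that a minimal cluster breaks into finitely many pieces, each of diameter $O(\tv{\E_i}^{1/N})$; the bound $H\le m$ on the number of pieces, and a fortiori connectedness, play no role once you place the $\F_i$ far apart in $\R^N$ and sum perimeters. So the sentence ``This is where Theorems~\mref{main1} and~\mref{main2} are used'' overstates the dependence.

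Two smaller points. First, the paper works with charts normalised so that volumes transfer \emph{exactly} (see~(\ref{choiceatlas}), where $|E|_M=|F|$); this removes your ``cheap volume correction'' step entirely. Second, $C_2^+$ and $C_2^-$ are not ratios of isoperimetric profiles: by~(\ref{defC_2^+})--(\ref{defC_2^-}) they are simply $\max_{p,\nu}\|\nu\|_p^*$ and $(\min_{p,\nu}\|\nu\|_p^*)^{-1}$, pointwise bounds on the dual infinitesimal norm. The paper uses them directly via $h_i(x,\nu)\le(1+\sigma)C_2^+$ and $h_i(x,\nu)\ge(1-\sigma)/C_2^-$ inside a chart of oscillation~$\sigma$, which immediately gives the two inequalities in~(\ref{eq12bis}); no optimisation over anisotropic profiles or over the basepoint~$p$ is required for the upper bound.
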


Finally, we emphasize that our results provide the correct generalization of Almgren's Lemma, in the setting where a sequence of $m$-clusters converges to a limit with strictly less than $m $ chambers.

The plan of the paper is the following. In Section~\ref{notdef} we will fix the notation that will be used through the paper, and we will present some important definitions. Then, Section~\ref{secsmalldiam} will be devoted to show Theorem~\ref{smalldiam}, which is a fundamental ``brick'' in our construction; in this result we show that connected components of a small minimal cluster must have small diameter, and the diameter is actually controlled by the $N$-th root of their total volume. In Section~\ref{secma1} and~\ref{secma2} we will prove Theorem~\mref{main1} and~\mref{main2} respectively. In the short final Section~\ref{secfin} we will show Theorem~\ref{AsymptoticTheorem} and we will present a couple of final remarks.

\subsection{Notation and basic definitions\label{notdef}}

This section is devoted to fix the basic notation we are going to use during the paper. We are given a $N$-dimensional compact manifold $M$, which can be endowed with either a Riemannian structure or more generally with a Finslerian one. The latter means that one is given a function $j_F:TM\to\R^+$ which is positively $1$-homogeneous and whose restriction to $T_p M$ is convex for every $p\in M$ (for a general discussion on the Finsler setting one can see for instance~\cite{MR1747675}).\par

Having a Riemannian or a Finsler manifold, volume and perimeter of subsets of $M$ are defined.  In this paper, it will be convenient for us to calculate them directly on the charts. Let us be more precise; one can take an atlas where the charts are (finitely many) bijective functions $\varphi_i : U_i\to V_i$, where each $U_i$ is a ball in $\R^N$ and $\{V_i\}$ is an open cover of $M$. Then, if the set $E$ is completely contained in some $V_i$, its volume and perimeter are simply
\begin{align}\label{choiceatlas}
|E|_M = |F| = \int_F 1\, d\H^N\,, && P_M(E) = \int_{\partial^* F} h_i\big(x,\nu_F(x)\big)\, d\H^{N-1}(x)\,,
\end{align}
where $F=\varphi_i^{-1}(E)$, and where the functions $h_i:U_i\times\R^N\to\R^+$ are determined by the Finsler structure. We are not going to need the precise definition of $h_i$ in this paper, it is enough to know that $h_i(x,\cdot)$ is positively $1$-homogeneous and convex for every index $i$ and every $x\in U_i$, and that each $h_i$ is of class ${\rm C}^2$ in the first variable because so is the manifold. Notice carefully that $h_i$ is of class ${\rm C}^2$ in the \emph{first} variable, but not in the \emph{second}! In particular, for every $x \in U_i$ the function $h_i(x,\cdot)$ is $1$-homogeneous and convex, but not necessarily of class ${\rm C}^2$ (and neither ${\rm C}^1$) and not necessarily strictly convex. Notice also that we are not using a ``standard'' atlas; in fact, in calculating volumes one typically has the integral of $g(x)$, while in~(\ref{choiceatlas}) we integrate the function $1$. This is possible by a suitable change of coordinates; it is not really important, it only makes the notation in the proofs slightly simpler.\par

Of course, if one is interested in a set $E$ which is not completely contained in a single chart, it is still possible to evaluate its volume and perimeter by means of~(\ref{choiceatlas}), by suitably subdividing it in different pieces, and without counting the perimeter added by the cuts. However, the situation for sets completely contained in a single chart is clearly simpler.\par

For every index $i$ and every $z\in U_i$, the function $h_i(z,\cdot)$ is a convex norm in $\R^N$, that we denote by $\|\cdot\|^*_{i,z}$. This norm induces a perimeter that we call $P^i_z$, where for every set $F\subseteq \R^N$ one has
\[
P^i_z(F) = \int_{\partial^* F} \|\nu_F(x)\|^*_{i,z}\, d\H^{N-1}(x) =  \int_{\partial^* F} h_i\big(z,\nu_F(x)\big)\, d\H^{N-1}(x)\,.
\]
Notice that, by the continuity of $h_i$, if the set $E$ is contained in $V_i$ and has a small diameter, then $P_M(E)$ is close to $P^i_z\big(\varphi^{-1}(E)\big)$ where $z$ is any point of $\varphi^{-1}(E)$.\par

The next piece of definition that we need now is the ``infinitesimal norm'', or ``infinitesimal perimeter'', at a point. Keep in mind that, for every $p\in M$, the restriction of $j_F$ to $T_p M$ is a positively $1$-homogeneous and convex function, so in particular it is an anisotropic norm on $\R^N\approx T_p M$, that we can call \emph{infinitesimal norm at $p$} and denote by $\|\cdot\|_p$. Such a norm, as usual, induces a perimeter on $\R^N$, that we call call \emph{infinitesimal perimeter} and denote by $P_p$, and which is given by
\[
P_p(F) = \int_{\partial^* F} \|\nu_F(x)\|_p^*\, d\H^{N-1}(x)
\]
for every set $F\subseteq\R^N$. Here, as usual, $\|\cdot\|_p^*$ is the dual norm of $\|\cdot\|_p$. Notice that $T_p M \approx \R^N$, but there is not a standard basis on $\R^N$. Therefore, the norm $\|\cdot\|^*_p$ and the perimeter $P_p$ are defined on $\R^N$ only up to a rotation. Using the infinitesimal norms is very useful. In particular, if $M$ is a Riemannian manifold, then the infinitesimal norm at each point is always the Euclidean one. More in general, we can give the following definition.
\begin{defin}[Fixed-norm manifold]\label{fnm}
A Finsler manifold $M$ is said a \emph{fixed-norm manifold} if the infinitesimal norms at all the points coincide (up to rotations). In this case, we call \emph{fixed norm} this unique norm.
\end{defin}
Of course, a Riemannian manifold is a special case of a fixed-norm manifold, corresponding to when the fixed norm is the Euclidean one.\par

To read this paper, a good knowledge of Finsler manifolds is not needed; in fact, we will do everything by using~(\ref{choiceatlas}) on suitable atlantes. A very positive fact is that, up to take suitable fine atlantes, the perimeters $P^i_z$ are arbitrarily close to the perimeters $P_p$ with $p=\varphi_i(z)$. More precisely, we can give the following definition.
\begin{defin}[oscillation of an atlas]\label{defosc}
Let $\big\{ \varphi_i,\, U_i,\, V_i\big\}$ be a finite atlas for which~(\ref{choiceatlas}) is in force, and let $\rho>0$ be a small number. We say that the atlas has \emph{oscillation bounded by $\rho$} if for any index $i$, any two points $x,\,y$ in $U_i$, any $p\in V_i$, and any unit vector $\nu\in\S^{N-1}$ one has
\begin{align*}
1-\rho \leq \frac{h_i(x,\nu)}{h_i(y,\nu)} \leq 1+\rho\,, && 1-\rho \leq \frac{\|\nu\|^*_{i,x}}{\| \nu\|_p^*} \leq 1+\rho\,.
\end{align*}
\end{defin}
For any given $\rho\ll 1$, it is possible to take an atlas with oscillation bounded by $\rho$, with number of charts exploding if $\rho\to 0$. It will be important to use atlantes with very small oscillation. Indeed, the perimeters $P_p$ only depend on the Finslerian structure of the manifolds, while the perimeters $P_z^i$ depend on the arbitrary choice of the atlas and so are not well defined. On the other hand, using the functions $h_i$ to evaluate the perimeters via~(\ref{choiceatlas}) is much simpler, because it only requires standard integrals on $\R^N$. Therefore, taking an atlas with a small oscillation is extremely convenient, because it allows to make simple calculations and estimates, at the same time knowing that one is doing only a small error with respect to the ``correct'' quantities which are well-defined.

Now, we pass to define the $m$-clusters and their volume and perimeter. Given $m\in\N$, a \emph{$m$-cluster} is a collection of $m$ mutually disjoint sets $\E=(E_1,\, E_2,\, \dots\,,\, E_m)$ in $M$. The \emph{volume} of the cluster $\E$ is the vector $|\E|_M = (|E_1|_M,\, |E_2|_M,\, \dots\,,\, |E_m|_M)\in (\R^+)^m$, and the \emph{total volume} of the cluster is $\tv\E_M=\big| \cup_{j=1}^m E_j\big|_M$, so the $L^1$ norm of the vector $|\E|_M\in\R^m$. Let us now define the \emph{perimeter} of $\E$. In the special Euclidean case, the perimeter of $\E$ is simply $\H^{N-1}(\partial^*\E)$, where we write for brevity
\[
\partial^* \E := \bigcup\nolimits_{j=1}^m \partial^* E_j\,.
\]
Notice that the perimeter of a cluster is smaller than the sum of the perimeters of the sets $E_j$, because the ``common boundary'' is counted just once. Therefore, in order to minimize the perimeter of clusters with given volume, it is easy to guess that it is convenient to have the different sets attached to each other in order to have a sufficiently large ``common boundary''. In the general case of manifolds, the definition of perimeter of the cluster is
\begin{equation}\label{perclu}
P_M(\E) = \frac 12\, \bigg(P_M\Big( \bigcup\nolimits_{j=1}^m E_j\Big) + \sum\nolimits_{j=1}^m P_M(E_j)\bigg)\,.
\end{equation}
This definition might seem strange at first sight, but it is immediate to observe that this actually reduces to $\H^{N-1}(\partial^*\E)$ in the Euclidean case, and that this is exactly what one has in mind as ``perimeter'' for a cluster in a manifold. It is useful to make another observation. A Finsler manifold is called ``symmetric'', or ``reversible'', if $h_i(x,\nu)=h_i(x,-\nu)$ for any choice of $i,\, x,\, \nu$. We are \emph{not} going to restrict ourselves to the case of reversible Finsler manifolds; however, in that case the perimeter of a cluster can be easily written as an integral. More precisely, if the whole cluster $\E$ is entirely contained in the chart $V_i$, and we call $\F=\varphi_i^{-1}(\E)$, then we can write
\[
P_M(\E) = \int_{\partial^*\F} h_i(x,\nu_\F(x)) d\H^{N-1}(x)\,.
\]
Observe that the normal vector $\nu_\F(x)$ is only defined up to the sign, then the above formula makes sense only if the manifold is symmetric; hence, we will only use the general definition~(\ref{perclu}). As usual, we will say that $\E$ is a \emph{minimal cluster} if it minimizes the perimeter among all clusters with the same volume. By standard lower semicontinuity and compactness, there exist minimal clusters for any given volume, as soon as the total volume is less than the volume of $M$ itself.\par

Now, we give another definition.
\begin{defin}[maximal diameter]\label{defmadi}
The \emph{maximal diameter $d$} for the manifold $M$ is the maximum of the diameters of all the clusters in $\R^N$ which have total volume equal to $1$ and which are minimal clusters with respect to the perimeter $P_p$ for some $p\in M$.
\end{defin}
Notice that the maximal diameter $d$ is well defined again by the compactness of $M$ and of the subset of $(\R^+)^m$ made by the vectors with $L^1$ norm equal to $1$. Hence, whenever a cluster is minimal for some of the perimeters $P_p$, its diameter must be bounded by $d$ (times the $N$-th root of the total volume, if different from $1$). By approximation, we can also deduce that if some cluster is \emph{almost} minimal for some perimeter, then a \emph{large} portion of it has diameter at most $2d$ (we could do the same with $cd$ with any $c>1$, but this would only make the notation more complicate). The corresponding definition is the following.

\begin{defin}[$\sigma$-minimal deviation]\label{defsmd}
Let $M$ be a manifold and $d$ its maximal diameter. For every small constant $\sigma$, we say that the \emph{$\sigma$-minimal deviation} is the largest constant $\eta=\eta(\sigma)$ such that, if $\F\subseteq\R^N$ is any cluster with total volume $1$ and such that for some $p\in M$ one has
\[
P_p(\F) \leq (1+\eta) \min \Big\{ P_p(\widetilde\F),\, |\widetilde\F| = |\F|\Big\}\,,
\]
then there exists a ball $B$ of diameter $2d$ such that $\tv{\F\cap B}\geq 1-\sigma$.
\end{defin}
In words, a cluster which is optimal up to a percentage $\eta(\sigma)$ for some perimeter $P_p$ must be contained in some ball of radius $d$ except for a small portion with total volume less than $\sigma$ (once again, the well-posedness of this definition is trivial by compactness).\par

We conclude this introduction by defining three constants (as usual, well-defined by compactness) which will be used in the sequel, and which depend only on the manifold and its structure, and \emph{not} on the choice of a particular atlas (in particular, they may be large, but they do not explode by taking atlantes with arbitrarily small oscillation).
\begin{gather}
C_1= \Big(\min \Big\{ \H^{N-1}(\partial^* \F),\, \tv{\F}=1,\, \hbox{$\F$ is a minimal cluster for some $P_p$}\Big\}\Big)^{-1}\,,\label{defC_1}\\
C^+_2 = \max \Big\{ \|\nu\|^*_p,\, p\in M,\, \nu\in\S^{N-1} \Big\}\,, \label{defC_2^+}\\
C^-_2 = \Big(\min \Big\{ \|\nu\|^*_p,\, p\in M,\, \nu\in\S^{N-1} \Big\}\Big)^{-1} \,.\label{defC_2^-}
\end{gather}
Notice the power $-1$ in the definition of $C_1$ and $C_2^-$; we have made this choice just for simplicity, so that for a ``bad'' manifold all the three constants might become very large, instead of someone very large and some other very small.

\section{The small connected components\label{secsmalldiam}}

This section is devoted to show that, for a minimal cluster of sufficiently small total volume, every connected component is contained in a single chart of a suitable atlas, and its diameter (as seen on the chart) is very small. More precisely, the main result of this section is the following.
\begin{theorem}\label{smalldiam}
Let $M$ be a $N$-dimensional, compact, Finsler manifold, and $m\in\N$ be given. For any small $\sigma<4^{-N}$, and for any atlas satisfying~(\ref{choiceatlas}) and with oscillation smaller than $\sigma$, there exists a constant $\bar\eps_1>0$ such that the following holds. Let $\E\subseteq M$ be a minimal $m$-cluster, and let $\widetilde\E$ be a connected component of $\E$ with total volume less than $\bar\eps_1$. Then, there exists an index $i$ such that $\widetilde\E$ is entirely contained in the chart $V_i$, and the cluster $\F= \varphi_i^{-1}(\widetilde \E)$ satisfies $\diam(\F)\leq 4C_4 \tv\F^{1/N}=4C_4 \tv{\widetilde \E}_M^{1/N}$, where $C_4$ is defined in~(\ref{defC4}).
\end{theorem}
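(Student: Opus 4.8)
The plan is to argue by a density/decay estimate on the perimeter, combined with the almost-minimality of connected components inherited from the minimality of $\E$. First, I would observe that if $\widetilde\E$ is a connected component of a minimal cluster $\E$ with very small total volume, then $\widetilde\E$ is itself an \emph{almost-minimal} cluster: one cannot move volume between $\widetilde\E$ and the rest of $\E$ at zero cost, but the ``fixing volume'' construction (adding or removing a small ball somewhere in the manifold, as in the standard lemma of Almgren) shows that rescaling $\widetilde\E$ to restore its prescribed volume vector changes the perimeter only by a factor $1+O(\tv{\widetilde\E}^{1/N})$. Hence for $\tv{\widetilde\E}$ small, $\widetilde\E$ is a $\sigma$-minimal-deviation competitor in the sense of Definition~\ref{defsmd}, once read on the appropriate chart. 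This already gives the key qualitative fact: away from a set of small total volume, $\widetilde\E$ is concentrated in a ball of controlled diameter.

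The heart of the argument is a diameter bound, which I would get from a differential inequality for $t\mapsto V(t):=\tv{\widetilde\E\setminus B_t(x_0)}$, where $x_0$ is (the chart-preimage of) a point of $\widetilde\E$. By minimality of $\E$, cutting $\widetilde\E$ along a sphere $\partial B_t$ and discarding the outside piece (then rescaling to recover the volume) must not decrease the perimeter too much; this produces the classical inequality $-V'(t)\gtrsim P_M(\widetilde\E\cap B_t)$ minus a small correction, together with the isoperimetric-type lower bound $P_M(\widetilde\E\cap B_t)\gtrsim V(t)^{(N-1)/N}$ coming from the local Euclidean (or fixed-norm) isoperimetric inequality valid on charts of small oscillation. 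The constants here are exactly where $C_1$ (the minimal cluster perimeter constant, \eqref{defC_1}) and the oscillation bound $\sigma$ enter, and they combine into the constant $C_4$ of~\eqref{defC4}. Integrating the resulting ODE $-V'(t)\gtrsim V(t)^{(N-1)/N}$ forces $V$ to vanish in finite time, with vanishing time $\lesssim V(0)^{1/N}=\tv{\widetilde\E}^{1/N}$; choosing $\bar\eps_1$ small enough that $4C_4\bar\eps_1^{1/N}$ is below the chart radius guarantees both that $\widetilde\E$ is contained in a single chart $V_i$ and that $\diam(\F)\leq 4C_4\tv\F^{1/N}$.

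The main obstacle I expect is the bookkeeping needed to make the cut-and-rescale argument legitimate \emph{on the manifold}: a spherical cut performed on a chart must be transported back to $M$, its perimeter cost compared with $P_M$ rather than with the chart perimeter $P^i_z$, and the volume of the discarded piece must be restored by an operation (removing a small ball elsewhere, or competing against a standard minimal cluster) whose perimeter cost is genuinely $o(1)$ relative to $P_M(\widetilde\E\cap B_t)$. This requires simultaneously that the oscillation of the atlas is small (so $P^i_z$ and $P_M$ are comparable up to $\sigma$) and that $\tv{\widetilde\E}$ is small (so the rescaling factor is close to $1$ and the competitor-ball correction is a lower-order term). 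A secondary subtlety is that one must first know $\widetilde\E$ has \emph{some} small diameter before the chart-localization even makes sense — this is where the $\sigma$-minimal-deviation input is used: it confines the bulk of $\widetilde\E$ to a ball of radius $d\,\tv{\widetilde\E}^{1/N}$, hence into a single chart for $\tv{\widetilde\E}$ small, and the remaining small-volume tail is then absorbed by the ODE argument applied inside that chart.
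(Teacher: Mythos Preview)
Your proposal has a genuine gap in the volume-fixing step. For $m$-clusters with $m>1$, a homothety rescales every chamber volume by the same factor $\lambda^N$, so ``rescaling $\widetilde\E$ to restore its prescribed volume vector'' after discarding $\widetilde\E\setminus B_t$ is in general impossible: the cut-off piece carries some volume vector $w$, and you would need $\lambda^N(v_q-w_q)=v_q$ simultaneously for all $q$, which forces $w$ proportional to $v$. If instead you restore volume by inserting a minimal cluster of volume $w$ elsewhere, its perimeter cost is of order $J(w)\gtrsim |w|^{(N-1)/N}$, which exactly cancels the isoperimetric lower bound on the discarded piece and the differential inequality collapses to the triviality $-V'(t)\geq 0$. (Your displayed inequality $-V'(t)\gtrsim P_M(\widetilde\E\cap B_t)$ is also written backwards: the spherical slice $-V'(t)$ is small, whereas the perimeter of the \emph{inside} piece is essentially the full perimeter of $\widetilde\E$ once $t$ is large; and there is no reason for $P_M(\widetilde\E\cap B_t)$ to be bounded below by the \emph{outside} volume $V(t)^{(N-1)/N}$.) An $\eps$--$\eps$ deformation would give a lower-order cost $C_5|w|$ and could rescue the ODE, but that is not what you describe, and it is a separate nontrivial tool that the paper only invokes later, in Lemma~\ref{fmcc}.

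The paper avoids this difficulty by never cutting and fixing. In Lemma~\ref{lemma1} the piece $\F_I=\F\cap B_I$ sitting in an annulus (with small boundary-crossing~\eqref{assumptlemma1}) is \emph{replaced} by a $P_p$-minimal cluster $\F'$ with the identical volume vector $|\F_I|$, so the competitor automatically has the right volume; minimality of $\E$ then yields $P_p(\F_I)\leq(1+\eta)P_p(\F')$, and the $\sigma$-minimal deviation (Definition~\ref{defsmd}) gives concentration of $\F_I$ in a ball of diameter $2d\,\eps_I^{1/N}$ rather than an ODE. Lemma~\ref{lemma2} removes the small-crossing hypothesis by a Fubini search for radii where the spherical slice is small, and the proof of Theorem~\ref{smalldiam} then \emph{iterates} Lemma~\ref{lemma2}, producing a geometric decay $\eps_n\leq\sigma^n\eps_0$; the condition $\sigma<4^{-N}$ is exactly what makes the resulting series $\sum_n(2\sigma^{1/N})^n$ converge. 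The constant $C_4$ in~\eqref{defC4} records this structure --- the $2d$ term is the concentration diameter from Lemma~\ref{lemma1}, the $8C_3/(1-2^{-1/N})$ term is the total length of the Fubini-search intervals --- neither of which your ODE mechanism would produce.
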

Notice carefully that the connected components of a minimal cluster $\E$ \emph{need not} to be the sets $E_j$, they are simply the connected components of the union $\cup_{j=1}^m E_j$. Of course, in principle it is possible that each $E_j$ happens to be a different connected component of $\E$, but this is absolutely not to be expected in general: on the contrary, this is surely false if the total volume of $\E$ is small enough.\par

In order to state the next technical results, we need another simple piece of notation. Given a ball $B=B(c,r)\subseteq\R^N$, with radius $r$ and center $c\in\R^N$, for every $A\subseteq [0,r]$ we write
\[
B_A := \Big\{ y \in B:\, |y-c|\in A \Big\} = \bigcup \big\{ \partial B(c,s),\, s\in A\big\}\,,
\]
so that in particular $B=B_{[0,r]}$. The first result of this section is then the following.
\begin{lemma}\label{lemma1}
Let $\sigma>0$ be given, let $\eta$ be the associated $\sigma$-minimal deviation, and let $\{\varphi_i,\, U_i,\, V_i\}$ be an atlas satisfying~(\ref{choiceatlas}) and with oscillation smaller than $\rho\leq\eta/4$. Let the ball $B=B(c,r)$ be compactly contained in $U_i$, and let $\F$ be a connected component of $\varphi_i^{-1}(\E\cap V_i)$, where $\E$ is some minimal cluster. Let also $I=[a,b]\subseteq [0,r]$ be an interval such that, calling $\eps_I =\tv{\F\cap B_I}$,
\begin{equation}\label{assumptlemma1}
\H^{N-1}\Big(\big(\partial B(c,a)\cup \partial B(c,b)\big)\cap \F\Big) \leq \frac 1{C_3}\, \eps_I^{\frac{N-1}N}\,,
\end{equation}
with
\begin{equation}\label{defC3}
C_3 = \frac{2C_1 C_2^- C_2^+}\rho\,.
\end{equation}
Then, there exists some interval $J\subseteq I$ such that
\begin{align}\label{thesislemma1}
|J| \leq 2d \eps_I^{1/N} \,, && \tv{\F \cap B_J} \geq \big(1-\sigma) \eps_I\,.
\end{align}
\end{lemma}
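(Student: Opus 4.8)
The plan is to rescale $\F\cap B_I$ to unit total volume, to show that the rescaled cluster is an almost-minimizer for one of the infinitesimal perimeters $P_p$, and then to apply Definition~\ref{defsmd}. If $2d\,\eps_I^{1/N}\geq |I|$ there is nothing to prove, since then $J=I$ satisfies both conditions in~(\ref{thesislemma1}); so assume $2d\,\eps_I^{1/N}< |I|$. Pick a point $z_0\in B$, set $p_0:=\varphi_i(z_0)\in V_i$ and $\lambda:=\eps_I^{-1/N}$, so that $\G:=\lambda\,(\F\cap B_I)\subseteq\R^N$ is a cluster of total volume $1$, and let $\widetilde\G$ be a minimal cluster for $P_{p_0}$ among the clusters of volume $|\G|$. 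By Definition~\ref{defmadi} this $\widetilde\G$ has diameter at most $d$, and since $P_{p_0}(\widetilde\G)\geq\big(\min_{\nu\in\S^{N-1}}\|\nu\|^*_{p_0}\big)\,\H^{N-1}(\partial^*\widetilde\G)$, the definitions~(\ref{defC_1}) and~(\ref{defC_2^-}) give $P^*:=P_{p_0}(\widetilde\G)\geq (C_1C_2^-)^{-1}$.

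Next I would build a competitor $\E'$ for $\E$ by removing $\varphi_i(\F\cap B_I)$ and re-inserting a translated copy $W$ of $\lambda^{-1}\widetilde\G$, placed inside the chart $V_i$ and disjoint from the remaining part of $\E$; this $W$ has diameter at most $d\,\eps_I^{1/N}$ and the same volume vector as $\F\cap B_I$, so that $|\E'|=|\E|$. (There is room for $W$ because the cluster does not fill $M$ and $W$ is small; a possible small overlap can be removed by a standard volume-fixing variation, whose perimeter cost is of lower order in $\eps_I$.) Since the modification is supported in $V_i$, minimality of $\E$ together with~(\ref{choiceatlas})--(\ref{perclu}) says that the ($h_i$-weighted) perimeter of $\Phi:=\varphi_i^{-1}(\E\cap V_i)$ inside $U_i$ is at most the corresponding quantity for $A:=\Phi\setminus(\F\cap B_I)$ plus that for $W$. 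Now: because $\F$ is a connected \emph{component} of $\Phi$, it is at positive distance from $A$ inside the open annulus, so $\partial^*\Phi$ agrees with $\partial^*(\F\cap B_I)\cup\partial^*A$ off the two spheres $\partial B(c,a)\cup\partial B(c,b)$, while on those spheres the overlap is contained in $(\partial B(c,a)\cup\partial B(c,b))\cap\F$, which by~(\ref{assumptlemma1}) has small measure; moreover $W$ is disjoint from $A$; and $(1-\rho)\|\nu\|^*_{p_0}\leq h_i(x,\nu)\leq(1+\rho)\|\nu\|^*_{p_0}\leq(1+\rho)C_2^+$ for every $x\in U_i$ and every $\nu\in\S^{N-1}$, by Definition~\ref{defosc}. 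Putting all this together — using that the cut set is counted in the boundary of both $\F\cap B_I$ and $A$ but not of $\Phi$, that $C_3=2C_1C_2^-C_2^+/\rho$ by~(\ref{defC3}), and that $P_{p_0}(\F\cap B_I)=\eps_I^{(N-1)/N}P_{p_0}(\G)$, $P_{p_0}(W)=\eps_I^{(N-1)/N}P^*$ — one obtains, after dividing through by $\eps_I^{(N-1)/N}$,
\[
(1-\rho)\,P_{p_0}(\G)\ \leq\ (1+\rho)\,P^*+2(1+\rho)\,C_2^+\cdot\frac{1}{C_3}\ =\ (1+\rho)\,P^*+\frac{(1+\rho)\,\rho}{C_1C_2^-}\ \leq\ (1+\rho)^2\,P^*\,,
\]
the last inequality by $P^*\geq(C_1C_2^-)^{-1}$. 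Hence $P_{p_0}(\G)\leq\frac{(1+\rho)^2}{1-\rho}\,P^*\leq(1+\eta)\,P^*$, because $\rho\leq\eta/4$.

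Therefore $\G$ is a cluster of total volume $1$ which is optimal up to the factor $1+\eta$ for $P_{p_0}$, so by the definition of the $\sigma$-minimal deviation (Definition~\ref{defsmd}) there is a ball $B^*\subseteq\R^N$ of diameter $2d$ with $\tv{\G\cap B^*}\geq 1-\sigma$. Rescaling back by $\lambda^{-1}=\eps_I^{1/N}$, the ball $B':=\lambda^{-1}B^*$ has diameter $2d\,\eps_I^{1/N}$ and $\tv{(\F\cap B_I)\cap B'}\geq(1-\sigma)\,\eps_I$. Finally, letting $J$ be the (nonempty) intersection of $I$ with the interval $\big\{\,|y-c|\,:\,y\in B'\,\big\}$, one has $J\subseteq I$, $|J|\leq\diam(B')=2d\,\eps_I^{1/N}$ and $B_I\cap B'\subseteq B_J$, so that $\tv{\F\cap B_J}\geq\tv{(\F\cap B_I)\cap B'}\geq(1-\sigma)\,\eps_I$; this is~(\ref{thesislemma1}).

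The main obstacle is the perimeter bookkeeping in the second step: one must verify carefully that excising $\F\cap B_I$ destroys no part of $\partial^*\E$ other than the boundary of $\F$ inside the annulus and creates new boundary only along the two spheres — here it is essential that $\F$ is a connected component, not an arbitrary subset — and then keep precise track of the multiplicative oscillation errors, since the estimate only closes because the value~(\ref{defC3}) of $C_3$ is exactly the one compensating the bound $P^*\geq(C_1C_2^-)^{-1}$ while the hypothesis $\rho\leq\eta/4$ leaves just the room needed for $(1+\rho)^2/(1-\rho)\leq 1+\eta$. The remaining point requiring a separate (but routine) argument is the placement of the reinserted cluster $W$.
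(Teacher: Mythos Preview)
Your proof is correct and takes essentially the same approach as the paper. The one point worth noting is that the paper sidesteps the placement obstacle you flag at the end: since $|I|>d\,\eps_I^{1/N}\geq\diam(\lambda^{-1}\widetilde\G)$, the rescaled minimizer can be translated to lie \emph{inside the annulus $B_I$ itself}, so the whole modification of $\E$ is supported in $\varphi_i(B_I)$ and no separate room-finding or volume-fixing argument elsewhere in $M$ is needed.
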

\begin{proof}
Let us call for brevity $\F_I=\F\cap B_I$, and let $\F'$ be a cluster with volume $|\F'|=|\F_I|\in\R^m$ which is minimal for the perimeter $P_p$, where we denote $p=\varphi_i(c)$. By definition of maximal diameter, we have that
\[
\diam(\F') \leq d \tv{\F'}^{1/N} = d \tv{\F_I}^{1/N}=d\,\eps_I^{1/N}\,.
\]
If $|I|\leq d\, \eps_I^{1/N}$, then we can simply set $J=I$ and~(\ref{thesislemma1}) is clearly true. Hence, we can assume without loss of generality that $|I|> d\,\eps_I^{1/N}$. In this case, up to a translation we have that $\F'$ is compactly contained in the annulus $B_I$. Therefore, we can define a competitor cluster $\E'$ simply removing $\varphi_i(\F_I)$ from $\E$ and replacing it with $\varphi_i(\F')$. By construction, $\tv{\E'} = \tv\E$, so the cluster $\E'$ is actually a competitor for $\E$. To evaluate the difference between the perimeters of $\E$ and $\E'$, we call $\Gamma=\big(\partial B(c,a) \cup \partial B(c,b)\big)\cap\F$, and for every $x\in\Gamma$ we call $\nu(x)$ the direction of the vector $x-c$, so the normal vector (up to the sign) to the annulus $B_I$ at $x$. Thus, keeping in mind~(\ref{choiceatlas}), by construction we have
\[
P_M(\E')-P_M(\E) = P_M(\varphi_i(\F')) - P_M(\varphi_i(\F_I)) + \int_\Gamma h_i(x,\nu(x))+h_i(x,-\nu(x)) \,d\H^{N-1}(x)\,.
\]
Since the oscillation of the atlas is less than $\rho$, we have
\begin{align*}
P_M(\varphi_i(\F_I))\geq (1-\rho) P_p(\F_I)\,, && P_M(\varphi_i(\F'))\leq (1+\rho) P_p(\F')\,.
\end{align*}
Moreover, for any $x\in\Gamma\subseteq U_i$ and any $\nu\in\S^{N-1}$, by~(\ref{defC_2^+}) we have
\[
h_i(x,\nu)  = \| \nu \|^*_{i,x}\leq (1+\rho) \|\nu\|^*_p \leq (1+\rho) C_2^+\,.
\]
Keeping in mind that $\E$ is a minimal cluster, thus $P_M(\E)\leq P_M(\E')$, we deduce
\[\begin{split}
(1-\rho) P_p(\F_I) &\leq P_M(\varphi_i(\F_I)) \leq P_M(\varphi_i(\F')) + \int_\Gamma h_i(x,\nu(x))+h_i(x,-\nu(x)) \,d\H^{N-1}(x)\\
&\leq (1+\rho) P_p(\F') + 2(1+\rho) C_2^+ \H^{N-1}(\Gamma)
\leq (1+\rho) \bigg( P_p(\F') + \frac{\rho \,\eps_I^{\frac{N-1}N}}{C_1C_2^-} \bigg)\,,
\end{split}\]
where we have used~(\ref{assumptlemma1}) and~(\ref{defC3}). Now, remember that $\F'$ is a minimal cluster for the perimeter $P_p$, and it has total volume $\eps_I$; hence, by~(\ref{defC_2^-}) and~(\ref{defC_1}), we have that
\[
P_p(\F') \geq \frac 1 {C_2^-}\, \H^{N-1}(\partial^* \F') \geq \frac{\eps_I^{\frac{N-1}N}}{C_1 C_2^-}\,.
\]
Inserting this bound in the above inequality, and keeping in mind that $\rho$ is small, we get
\[
P_p(\F_I) \leq \frac{(1+\rho)^2}{1-\rho} \, P_p(\F')\leq (1+4\rho) P_p(\F')\leq (1+\eta) P_p(\F')  \,.
\]
Finally, recalling that $|\F_I|=|\F'|$, that $\F'$ is a minimal cluster for the perimeter $P_p$, and that $\eta$ is the $\sigma$-minimal deviation, by Definition~\ref{defsmd} we obtain the existence of a ball $D$ with diameter $2d\eps_I^{1/N}$ and such that $\tv{\F_I\cap D}\geq (1-\sigma) \eps_I$. We can finally call $J\subseteq I$ the smallest interval such that $D \cap B_I \subseteq B_J$, and notice that this interval satisfies the requirements of~(\ref{thesislemma1}).
\end{proof}

Our second result says that the thesis of Lemma~\ref{lemma1} remains true also removing the assumtion~(\ref{assumptlemma1}), up to replace the factor $2$ in~(\ref{thesislemma1}) with a larger one, and to allow $J$ to be not an interval, but the union of three intervals.
\begin{lemma}\label{lemma2}
Let $\sigma,\, \eta,\, \{\varphi_i,\, U_i,\, V_i\},\, \rho,\, B, \, \E,\, \F$ be as in Lemma~\ref{lemma1}, with $\rho=\eta/4$, let again $I\subseteq [0,r]$ be an interval, and call $\eps_I=\tv{\F\cap B_I}$. Then, there exist three disjoint intervals $J' = (a,a^+)$, $J'' = (a',b')$ and $J'''=(b^-,b)$ such that $J=J'\cup J'' \cup J'''$ satisfies
\begin{align}\label{thesislemma2}
|J| \leq C_4 \eps_I^{1/N} \,, && \tv{\F \cap B_J} \geq \big(1-\sigma) \eps_I\,,
\end{align}
where
\begin{equation}\label{defC4}
C_4 = \frac{8 C_3}{1-2^{-1/N}}+2d\,.
\end{equation}
\end{lemma}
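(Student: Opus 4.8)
The plan is to iterate Lemma~\ref{lemma1} in order to peel off the "good" intervals, and to bound the total length of the radii we have to throw away by a geometric series. The obstruction to directly applying Lemma~\ref{lemma1} is precisely the assumption~(\ref{assumptlemma1}): the endpoints $a,\,b$ of the interval $I$ need not satisfy the smallness condition on $\H^{N-1}\big((\partial B(c,a)\cup\partial B(c,b))\cap\F\big)$. But a mean-value/averaging argument in the radial variable fixes this. First I would observe that, by the coarea formula applied to the radial distance function on the annulus $B_I$, one has $\int_a^b \H^{N-1}(\partial B(c,s)\cap\F)\,ds\leq \H^N(\F\cap B_I)\leq\eps_I$ (up to the constant $C_2^-$ relating the Euclidean $\H^{N-1}$ to the perimeter weighted by the norm, but since $\eps_I$ is a volume this is even cleaner). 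Hence, on a subinterval of $I$ of length at least, say, $|I|/3$, the function $s\mapsto \H^{N-1}(\partial B(c,s)\cap\F)$ is small — more precisely, on a set of radii of substantial measure its value is at most $3\eps_I/|I|$. The point is that if $|I|$ is not already $\leq C_4\eps_I^{1/N}$, then $3\eps_I/|I| \ll \eps_I^{(N-1)/N}$, so~(\ref{assumptlemma1}) can be satisfied by choosing $a',\,b'$ appropriately inside $I$.

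More concretely, here is the iteration I would set up. If $|I|\leq 2d\,\eps_I^{1/N}\leq C_4\eps_I^{1/N}$ we are done trivially by taking $J''=I$ and $J'=J'''=\emptyset$. Otherwise, I want to choose $a^+\in I$ close to $a$ and $b^-\in I$ close to $b$, with $a^+-a$ and $b-b^-$ both small, such that the interval $I'=[a^+,b^-]$ satisfies~(\ref{assumptlemma1}); then Lemma~\ref{lemma1} produces an interval $J''\subseteq I'$ with $|J''|\leq 2d\,\eps_{I'}^{1/N}\leq 2d\,\eps_I^{1/N}$ and $\tv{\F\cap B_{J''}}\geq(1-\sigma)\eps_{I'}$. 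The mass lost is $\eps_I-\eps_{I'}=\tv{\F\cap B_{[a,a^+]}}+\tv{\F\cap B_{[b^-,b]}}$ plus the $\sigma\eps_{I'}$ lost inside. To control the endpoint losses and the lengths $a^+-a$, $b-b^-$ simultaneously, I would iterate on each side: subdivide $[a,b]$ into dyadic-type pieces and, using the coarea bound above, pick radii where $\H^{N-1}(\partial B(c,s)\cap\F)\leq \frac1{C_3}(\cdot)^{(N-1)/N}$ — such radii exist because the average of $\H^{N-1}(\partial B(c,s)\cap\F)$ over a piece of length $\ell$ is $\leq \eps_I/\ell$, which beats the required threshold as soon as $\ell\gtrsim C_3\eps_I^{1/N}$. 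This is where the factor $\frac{8C_3}{1-2^{-1/N}}$ in~(\ref{defC4}) comes from: each time we halve the relevant mass we pay a length of order $C_3(\text{current mass})^{1/N}$, and $\sum_k (2^{-k})^{1/N}=(1-2^{-1/N})^{-1}$.

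The cleanest way to organize this is probably as follows. Set $J'$ to be the union of the "left" discarded intervals and $J'''$ the union of the "right" ones; but since the statement only allows $J'$ and $J'''$ to be single intervals, I would instead use Lemma~\ref{lemma1} just \emph{once}, after a single well-chosen pair of cuts. Choose $a^+$ to be the smallest radius $s\geq a$ with $s-a\leq \frac{4C_3}{1-2^{-1/N}}\eps_I^{1/N}$ and $\H^{N-1}(\partial B(c,s)\cap\F)\leq\frac1{C_3}\big(\tv{\F\cap B_{[s,b]}}\big)^{(N-1)/N}$ — a nested/continuity argument (shrinking the mass $\tv{\F\cap B_{[s,b]}}$ as $s$ increases, together with the coarea average bound) guarantees such an $s$ exists within the allotted length, and symmetrically for $b^-$. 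With $I'=[a^+,b^-]$ and $\eps_{I'}=\tv{\F\cap B_{I'}}\leq\eps_I$, apply Lemma~\ref{lemma1} to $I'$: it yields $J''\subseteq I'$ with $|J''|\leq 2d\,\eps_{I'}^{1/N}\leq 2d\,\eps_I^{1/N}$ and $\tv{\F\cap B_{J''}}\geq(1-\sigma)\eps_{I'}$. Now take $J'=(a,a^+)$, $J'''=(b^-,b)$; then $|J'|+|J''|+|J'''|\leq\big(\frac{8C_3}{1-2^{-1/N}}+2d\big)\eps_I^{1/N}=C_4\eps_I^{1/N}$, and $\tv{\F\cap B_J}\geq\tv{\F\cap B_{J''}}\geq(1-\sigma)\eps_{I'}=(1-\sigma)\big(\eps_I-\tv{\F\cap B_{J'}}-\tv{\F\cap B_{J'''}}\big)$, which is $\geq(1-\sigma)\eps_I$ only if the endpoint losses vanish. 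So the real work is to choose the cuts so that the \emph{discarded mass is actually zero up to the $\sigma$-budget}: one must place $a^+,\,b^-$ at radii where not only the slice is thin but also essentially no mass lies between $a$ and $a^+$; the averaging argument gives a radius with thin slice, and the Lemma's conclusion $\tv{\F\cap B_{J''}}\ge(1-\sigma)\eps_{I'}$ already absorbs a $\sigma$-fraction, so I would instead route the tiny endpoint masses into this same budget by choosing $\sigma$ in Lemma~\ref{lemma1} slightly smaller than the target $\sigma$. The main obstacle, and the one deserving the most care, is exactly this bookkeeping: showing that a single pair of cuts, of total length $\leq\frac{8C_3}{1-2^{-1/N}}\eps_I^{1/N}$, can be placed so that~(\ref{assumptlemma1}) holds at both endpoints \emph{and} the mass outside $[a^+,b^-]$ together with the $\sigma$-loss from Lemma~\ref{lemma1} stays within the allowed $\sigma\eps_I$ — this is where the geometric-series constant and the coarea average estimate must be combined quantitatively.
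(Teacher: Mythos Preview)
Your overall strategy---coarea averaging to locate radii with thin slices, then invoke Lemma~\ref{lemma1}, with a geometric-series control on the cut lengths---is the right one, but you have manufactured a phantom obstacle by misreading the role of $J'$ and $J'''$. These intervals are \emph{included} in $J=J'\cup J''\cup J'''$, so the mass in $B_{J'}$ and $B_{J'''}$ counts \emph{toward} $\tv{\F\cap B_J}$, not against it. With $I'=[a^+,b^-]$ and $\eps_{I'}=\tv{\F\cap B_{I'}}$, the correct arithmetic is
\[
\tv{\F\cap B_J}=\tv{\F\cap B_{J'\cup J'''}}+\tv{\F\cap B_{J''}}=(\eps_I-\eps_{I'})+\tv{\F\cap B_{J''}}\ge(\eps_I-\eps_{I'})+(1-\sigma)\eps_{I'}=\eps_I-\sigma\eps_{I'}\ge(1-\sigma)\eps_I\,,
\]
so nothing needs to be ``routed into the $\sigma$-budget'' and there is no reason to shrink $\sigma$ in Lemma~\ref{lemma1}.

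The genuine issue, which you flag but do not resolve, is that a single pair of cuts with thin slices need not satisfy~(\ref{assumptlemma1}) for the mass $\eps_{I'}$ trapped between them, since $\eps_{I'}$ may be much smaller than $\eps_I$; your self-referential choice of $a^+$ is not shown to exist within the allotted length. The paper handles this by an explicit recursion rather than a single cut. Given $I_n=[a_n,b_n]$ with mass $\eps_n$, one seeks the ``almost first'' $a_{n+1}$ in the left half and the ``almost last'' $b_{n+1}$ in the right half where $\H^{N-1}(\partial B(c,\cdot)\cap\F)<\tfrac1{4C_3}\eps_n^{(N-1)/N}$; Fubini on the complementary set gives $(a_{n+1}-a_n)+(b_n-b_{n+1})\le 8C_3\eps_n^{1/N}$. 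If $\eps_{n+1}\ge\eps_n/2$, the two slice bounds sum to $\le\tfrac1{2C_3}\eps_n^{(N-1)/N}\le\tfrac1{C_3}\eps_{n+1}^{(N-1)/N}$, which is exactly~(\ref{assumptlemma1}) for $I_{n+1}$, and Lemma~\ref{lemma1} applies; otherwise $\eps_{n+1}<\eps_n/2$ and one iterates. Since then $\eps_n\le 2^{-n}\eps_I$, the total cut length is at most $8C_3\sum_n(2^{-n}\eps_I)^{1/N}=\tfrac{8C_3}{1-2^{-1/N}}\eps_I^{1/N}$, and because the $a_n$ increase and the $b_n$ decrease, $J'=(a,a_{\rm final})$ and $J'''=(b_{\rm final},b)$ are single intervals as required.
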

\begin{proof}
We will perform an argument by recursion. We start by noticing that, if for every $t\in (a, (a+b)/2)$ one has
\begin{equation}\label{checkthis}
\H^{N-1}\big( \F\cap \partial B(c,t)\big) \geq \frac 1{4C_3}\,\eps_I^{\frac{N-1}N}\,,
\end{equation}
then by Fubini
\[
\eps_I = \tv{\F \cap B_I} \geq \int_a^{\frac{a+b}2} \H^{N-1} \big( \F\cap \partial B(c,t)\big)\,dt\geq \frac 1{8C_3}\, |I|\, \eps_I^{\frac{N-1}N}\,,
\]
which gives
\[
|I|\leq  8 C_3  \eps_I^{1/N}\leq C_4\eps_I^{1/N}\,.
\]
Then, the thesis is already obtained with $J=I$ (so $J'=J'''=\emptyset$ and $J''=I$). In the very same way, the thesis immediately follows if~(\ref{checkthis}) is true for every $t \in ((a+b)/2,b)$.

Otherwise, we let $a_1$ be an ``almost smallest'' element of $(a,(a+b)/2)$ such that~(\ref{checkthis}) fails. This means that~(\ref{checkthis}) fails at $a_1$, but it is true for all $t\in (a,(a+a_1)/2)$. Analogously, we let $b_1$ be an ``almost largest'' element of $((a+b)/2,b)$ such that~(\ref{checkthis}) fails, that is, (\ref{checkthis}) is true for all $t\in ((b_1+b)/2,b)$, but false at $b_1$. The same calculation as before via Fubini gives
\[
\eps_I \geq \int_{(a,(a+a_1)/2)\cup ((b_1+b)/2,b)} \H^{N-1} \big( \F\cap \partial B(c,t)\big)\,dt\,,
\]
which gives
\begin{equation}\label{length1}
(a_1-a) + (b-b_1) \leq  8C_3 \eps_I^{1/N}\,.
\end{equation}
Let us now call $I_1=[a_1,b_1]$, and let $\eps_1 =\tv{\F\cap B_{I_1}}$. Since we know that~(\ref{checkthis}) fails at $a_1$ and $b_1$, we have
\begin{equation}\label{failure}
\H^{N-1} \big( (\partial B(c,a_1)\cup \partial B(c,b_1))\cap \F\big) \leq \frac  1{2C_3}\, \eps_I^{\frac{N-1}N}\,.
\end{equation}
We can now distinguish two possibilities. First, assume that $\eps_1\geq \eps_I/2$, so that the above estimate implies
\[
\H^{N-1} \big( (\partial B(c,a_1)\cup \partial B(c,b_1))\cap \F\big) \leq \frac {2^{-1/N}} {C_3}\, \eps_1^{\frac{N-1}N}\leq \frac 1{C_3}\, \eps_1^{\frac{N-1}N} \,.
\]
Thus, we can apply Lemma~\ref{lemma1} with the interval $I_1=(a_1,b_1)$ in place of $I$, since the above estimate corresponds to~(\ref{assumptlemma1}). Lemma~\ref{lemma1} provides then us with an interval $J''\subseteq I_1$ satisfying~(\ref{thesislemma1}), which reads as
\begin{align*}
|J''| \leq 2 d \eps_1^{1/N} \leq 2d \eps_I^{1/N}\,, && \tv{\F\cap B_{J''}} \geq (1-\sigma) \eps_1\,.
\end{align*}
The thesis is then obtained with $J'=(a,a_1)$ and $J'''=(b_1,b)$. Indeed, by the above estimate and~(\ref{length1}), we have
\[
|J| = |J'| + |J''| + |J'''| \leq (8C_3 +2d )\eps_I^{1/N} \leq C_4 \eps_I^{1/N}\,,
\]
and
\[
\tv{\F\cap B_J} = \eps_I - \eps_1 + \tv{\F\cap B_{J''}} \geq \eps_I - \sigma \eps_1 \geq (1-\sigma) \eps_I\,,
\]
so that~(\ref{thesislemma2}) is true. Hence, we have obtained the proof under the assumption that $\eps_1\geq \eps_I/2$.

Let us now assume that $\eps_1< \eps_I/2$. In this case, we start again as in the beginning of the proof, using $I_1$ and $\eps_1$ in place of $I$ and $\eps_i$. In place of~(\ref{checkthis}), we are now interested in the validity of the inequality
\begin{equation}\tag{\ref{checkthis}'}\label{checkthis'}
\H^{N-1}\big( \F\cap \partial B(c,t)\big) \geq \frac 1{4C_3}\,\eps_1^{\frac{N-1}N}
\end{equation}
for $t\in I_1$. If this inequality is true for all $t\in (a_1, (a_1+b_1)/2)$, or for all $t\in ((a_1+b_1)/2,b_1)$, the same argument as before by Fubini implies
\[
|I_1| \leq 8C_3 \eps_1^{1/N} \,,
\]
which using~(\ref{length1}) and keeping in mind that $\eps_1<\eps_I/2$ implies
\[
|I| \leq |I_1| + 8C_3 \eps_I^{1/N} \leq 8 C_3 \Big(  1+2^{-1/N}\Big)\eps_I^{1/N}\,.
\]
Therefore, also in this case the thesis is completed simply with $J=I$. Otherwise, arguing as in the previous step we call $a_2$ (resp., $b_2$) an ``almost smallest'' (resp., ``almost largest'') element of $(a_1,(a_1+b_1)/2)$ (resp., $((a_1+b_1)/2,b_1)$) such that~(\ref{checkthis'}) fails, and we denote $I_2=[a_2,b_2]$ and $\eps_2=\tv{\F\cap I_2}$. The same arguments as before give now, in place of~(\ref{length1}) and~(\ref{failure}), the estimates
\begin{gather}
(a_2-a_1) + (b_1-b_2) \leq 8 C_3 \eps_1^{1/N}\leq 8\cdot 2^{-\frac 1N} C_3\, \eps_I^{1/N}\,, \tag{\ref{length1}'}\label{length1'}\\
\H^{N-1} \big( (\partial B(c,a_2)\cup \partial B(c,b_2))\cap \F\big) \leq \frac 1 {2C_3}\, \eps_1^{\frac{N-1}N}\,.\tag{\ref{failure}'}\label{failure'}
\end{gather}
As before, if $\eps_2\geq \eps_1/2$ then~(\ref{failure'}) implies~(\ref{assumptlemma1}) with the interval $I_2$ and $\eps_2$ in place of $I$ and $\eps_I$, so we can apply Lemma~\ref{lemma1} getting an interval $J''\subseteq I_2$ satisfying~(\ref{thesislemma1}), that is,
\begin{align*}
|J''| \leq 2 d \eps_2^{1/N} \leq 2d \eps_I^{1/N}\,, && \tv{\F\cap B_{J''}} \geq (1-\sigma) \eps_2\,.
\end{align*}
We obtain then the thesis by setting $J'=(a,a_2)$ and $J'''=(b_2,b)$, since~(\ref{length1}) and~(\ref{length1'}) give
\[
|J| \leq \Big(8C_3  \big( 1 + 2^{-1/N}\big)+ 2d\Big)\eps_I^{1/N}\leq C_4 \eps_I^{1/N}\,,
\]
while by construction and by~(\ref{thesislemma1}) we also have
\[
\tv{\F\cap B_J} = \eps_I - \eps_2 + \tv{\F\cap B_{J''}} \geq \eps_I - \sigma \eps_2 \geq (1-\sigma) \eps_I\,,
\]
so~(\ref{thesislemma2}) is proved. Thus, we are done if $\eps_2\geq \eps_1/2$.

If $\eps_2<\eps_1/2 <\eps_1/4$, we continue recursively in the obvious way. Let us check that this works. After $n\geq 2$ steps, if the analogous of~(\ref{checkthis}) holds in half of the interval $I_n=[a_n,b_n]$, then by Fubini we get $|I_n|\leq 8C_3 \eps_n^{1/N}$, which by~(\ref{length1}) and the anologous estimates, and keeping in mind that $\bal\eps_n\leq \frac{\eps_{n-1}}2\leq \cdots \leq \frac{\eps_I}{2^n}\eal$, implies
\[\begin{split}
|I| &\leq 8C_3 \big( \eps_I^{1/N} + \eps_1^{1/N} + \cdots + \eps_n^{1/N}\big) \leq 8 C_3 \eps_I^{1/N} \bigg(1 + 2^{-1/N}+ 2^{-2/N} + \cdots + 2^{-n/N}\bigg)\\
&\leq \frac{8 C_3}{1-2^{-1/N}} \,\eps_I^{1/N}\leq C_4 \eps_I^{1/N}\,,
\end{split}\]
so the claim is simply true with $J=I$. Otherwise, we can call $I_{n+1}=[a_{n+1},b_{n+1}]$ where $a_{n+1}$ and $b_{n+1}$ are an almost smallest and an almost largest element of $(a_n, (a_n+b_n)/2)$ and of $((a_n+b_n)/2,b_n)$ such that the analogous of~(\ref{checkthis}) does not hold. As before, we have the analogous of~(\ref{length1}) and~(\ref{failure}), so that we have to check whether $\eps_{n+1}\geq \eps_n/2$. If so, then we can apply Lemma~\ref{lemma1} with the interval $I_{n+1}$ and $\eps_{n+1}=\tv{\F\cap B_{I_{n+1}}}$, getting and interval $J''$ satisfying~(\ref{thesislemma1}), thus $|J''| \leq 2d \eps_I^{1/N}$ and $\tv{\F\cap J''}\geq (1-\sigma)\eps_{n+1}$, and then the claim is obtained with $J'=(a,a_{n+1})$ and $J'''=(b_{n+1},b)$, since
\[
|J| \leq \bigg(8C_3 \Big(1 + 2^{-1/N} + \cdots + 2^{-n/N}\Big) +2d\bigg) \eps_I^{1/N}
\leq \bigg( \frac{8C_3}{1-2^{-1/N}} + 2d\bigg)\, \eps_I^{1/N} = C_4 \eps_I^{1/N}
\]
and
\[
\tv{\F\cap B_J} = \eps_I - \eps_{n+1} + \tv{\F\cap J''} \geq (1-\sigma) \eps_I\,.
\]
If not, we go on with the recursion. Thus, either the recursion stops at some point, and then we have obtained the thesis with a suitable $J$, or the recursion never stops, and then we can call $a_\infty$ and $b_\infty$ the limits of the increasing sequence $\{a_n\}$ and of the decreasing sequence $\{b_n\}$. By construction,
\[
\tv{\F\cap B_{(a_\infty,b_\infty)}} = \lim_{n\to\infty} \tv{\F\cap B_{(a_n,b_n)}} = \lim_{n\to\infty} \eps_n \leq \lim_{n\to\infty} 2^{-n} \eps_I = 0\,,
\]
while
\[
(a_\infty - a ) + (b-b_\infty)\leq \frac{8C_3}{1-2^{-1/N}}\, \eps_I^{1/N} \leq C_4 \eps_I^{1/N}\,.
\]
Then, in this last case the claim is obtained with $J'=(a,a_\infty)$, $J''=\emptyset$, and $J'''=(b_\infty,b)$.
\end{proof}

Thanks to the above lemma, we are now in position to show the main result of this section.
\begin{proof}[Proof of Theorem~\ref{smalldiam}]
Let $\sigma<4^{-N}$ be fixed, let $\eta>0$ be the $\sigma$-minimal deviation, and define $\rho=\min\{ \eta/4,\, \sigma\}$, which is a constant only depending on $M$ and on $\sigma$. Let us take an atlas satifying~(\ref{choiceatlas}) and with oscillation smaller than $\rho$, so in particular smaller than $\sigma$. By compactness of $M$, there exists a constant $\bar r>0$, depending on the atlas, on $M$, and on $\sigma$, such that for any $p\in M$ there exists an index $i$ such that $p\in V_i$, and the whole ball with center $\varphi_i^{-1}(p)$ and radius $\bar r$ is compactly contained in $U_i$. We can then define
\begin{equation}\label{defbarr}
\bar\eps_1 := \min \bigg\{ \frac 12\, \omega_N ,\, \frac 1{(2C_4)^N}\bigg\} \ \bar r^N\,.
\end{equation}
In particular, the volume of every ball of radius $\bar r$ entirely contained in some $U_i$ is at least $2\bar\eps_1$.\par

Let now $\E$ be any minimal $m$-cluster, and let $\widetilde\E$ be any of its connected components with total volume less than $\bar\eps_1$. Fix an arbitrary point $p\in \partial^*\widetilde\E$, and let the index $i$ be such that $p\in V_i$ and the ball centered at $c=\varphi_i^{-1}(p)$ and with radius $\bar r$ is compactly contained in $U_i$. Let us call $\F$ the connected component of $\varphi_i^{-1}(\widetilde\E\cap V_i)$ containing $c$, and let $\eps=\tv\F$, which is smaller than $\bar\eps_1$ since $\varphi_i(\F)$ is contained in $\widetilde\E$. Let us now call $B=B(c,\bar r)$ the ball centered at $c$ and with radius $\bar r$, which by construction is compactly contained in $U_i$. Let us also call $I_0=[0,r]$, where
\[
r = 2 C_4 \tv\F^{1/N} \leq \bar r\,,
\]
and the last inequality comes from~(\ref{defbarr}) and since $\tv\F=\eps\leq \bar\eps_1$. Moreover, let us call $\eps_0 = \tv{\F\cap B_{I_0}}\leq \eps\leq \bar\eps_1$. We can apply Lemma~\ref{lemma2} to the interval $I_0$, getting a set $J_1$ satisfying
\begin{align*}
|J_1| \leq C_4 \eps_0^{1/N} \,, && \eps_1=\tv{\F \cap B_{I_1}} \leq \sigma\eps_0\,,
\end{align*}
where we have called $I_1=I_0\setminus J_1$. Since $J_1=J_1'\cup J_1''\cup J_1'''$, and $J_1',\, J_1''$ and $J_1'''$ are three intervals, with $J_1'$ starting at the first endpoint of $I_0$ and $J_1'''$ ending at the second endpoint of $I_0$, we deduce that $I_1$ is the union of two disjoint intervals, that we call $I_1^1$ and $I_1^2$. Accordingly, we call $\eps_1=\eps_1^1+\eps_1^2$, where $\eps_1^l=\tv{\F\cap B_{I_1^l}}$ for $l\in \{1,\,2\}$.

We can now apply Lemma~\ref{lemma2} separately to the intervals $I_1^1$ and $I_1^2$, obtaining two sets $J_2^1$ and $J_2^2$ which satisfy
\begin{align*}
|J_2^l| \leq C_4 (\eps_1^l)^{1/N} \,, && \eps_2^l=\tv{\F \cap B_{I_2^l}} \leq \sigma\eps_1^l\,,
\end{align*}
where $l\in\{1,\,2\}$ and we call $I_2^l=I_1^l\setminus J_2^l$. Setting $J_2=J_2^1\cup J_2^2$, and then also $I_2=I_1\setminus J_2=I_2^1\cup I_2^2$ and $\eps_2=\eps_2^1+\eps_2^2=\tv{\F\cap B_{I_2}}$, the last estimates become
\begin{align*}
|J_2| \leq C_4 \Big((\eps_1^1)^{1/N}+(\eps_1^2)^{1/N}\Big)\leq 2 C_4 \eps_1^{1/N} \,, && \eps_2 \leq \sigma\eps_1\leq \sigma^2 \eps_0\,.
\end{align*}
Going on with the obvious recursion, we find a sequence of disjoint sets $J_n$ such that each set $I_n=I_0\setminus (J_1\cup J_2 \cdots \cup J_n)$ is made by $2^n$ disjoint intervals, and so that $\eps_n = \tv{\F\cap B_{I_n}}\leq \sigma^n \eps_0$, while
\[
|J_{n+1}| \leq 2^n\, C_4  \eps_n^{1/N} \leq \Big(2\sigma^{1/N}\Big)^n C_4 \eps_0^{1/N}\,.
\]
We can then call $J_\infty= \cup_{n\in\N} J_n$, and $I_\infty=I_0\setminus J_\infty$. By the above estimates,
\[
|J_\infty| \leq C_4 \eps_0^{1/N} \sum\nolimits_{n\in\N} \Big(2\sigma^{1/N}\Big)^n = \frac{C_4}{1-2\sigma^{1/N}}\, \eps_0^{1/N}
< 2C_4 \tv\F^{1/N}=r\,,
\]
where the last inequality is due to the fact that $\eps_0\leq \eps=\tv\F$, and that $\sigma<4^{-N}$. On the other hand, keeping in mind that the sets $I_n$ are monotone decreasing (that is, for every $n$ one has that $I_{n+1}\subseteq I_n$), we deduce that $\tv{\F\cap B_{I_\infty}}=0$. As a consequence, $\tv{\F\cap B_{J_\infty}}=\eps=\tv\F$. That is, $\F$ is entirely contained in $B_{I_0}$, thus it is compactly contained in $U_i$. By contruction, this means that actually $\widetilde\E=\varphi_i(\F)$. Summarizing, we have proved that $\widetilde \E $ is entirely contained in the chart $V_i$, and moreover $\diam(\F)\leq 2r \leq 4C_4 \tv{\F}^{1/N} = 4C_4\tv{\widetilde\E}_M$, so the proof is completed.
\end{proof}

\section{The general Finsler case: proof of Theorem~\mref{main1}\label{secma1}}

This section is devoted to prove Theorem~\mref{main1}, that is, that sufficiently small minimal $m$-clusters in $M$ can have at most $m$ connected components. We start with the preliminary observation that every minimal cluster (not necessarily with small voume) contains a finite number of connected components.

\begin{lemma}\label{fmcc}
Let $\E$ be a minimal cluster in $M$. Then, $\E$ is done by finitely many connected components.
\end{lemma}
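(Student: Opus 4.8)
The plan is to show that every connected component of a minimal cluster $\E$ has total volume bounded below by a positive constant depending only on $M$, $m$ and $\E$. Since the connected components of $\E$ are mutually disjoint and the sum of their total volumes is $\tv\E_M\le|M|<\infty$, such a lower bound immediately forces the number of components to be finite.

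To produce the lower bound I would first invoke the standard ``volume-fixing variations'' mechanism, which shows that a minimal cluster is a $\Lambda$-minimizer of the perimeter at small scales: there are constants $\Lambda<\infty$ and $r_0>0$, depending only on $\E$, such that $P_M(\E)\le P_M(\E')+\Lambda\,|\E\,\triangle\,\E'|$ for every cluster $\E'$ with $|\E\,\triangle\,\E'|<r_0$ (here $|\E\,\triangle\,\E'|=\sum_j|E_j\,\triangle\,E_j'|$). Concretely, given a competitor that alters the volume vector of $\E$ by a small amount $v$, one restores each chamber's volume by flowing $E_j$ along a vector field compactly supported near a Lebesgue density point of its reduced boundary, routing the needed amount from the exterior chamber through the adjacency graph of the chambers when $E_j$ is not directly adjacent to the exterior; since the anisotropic perimeter differs from the Euclidean one only by the bounded factors $C_2^{\pm}$, each such correction changes the perimeter by at most $O(|v|)$, with a constant depending only on the fixed cluster $\E$.

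Granting this, let $\widetilde\E$ be a connected component of $\E$ with $\tv{\widetilde\E}_M<r_0$ (if no such component exists we are already done). Reassigning the region occupied by $\widetilde\E$ to the exterior chamber produces a cluster $\E'$ with $|\E\,\triangle\,\E'|=\tv{\widetilde\E}_M$, and since $\widetilde\E$ is a connected component of $\bigcup_j E_j$ it is separated from the rest of it, so that $P_M(\E')=P_M(\E)-\tfrac12\big(P_M(\widetilde\E)+\sum_jP_M(E_j\cap\widetilde\E)\big)\le P_M(\E)-\tfrac12 P_M(\widetilde\E)$. On the other hand, by Theorem~\ref{smalldiam} the set $\widetilde\E$ lies in a single chart, so combining~(\ref{choiceatlas}), the oscillation bound, and the Euclidean isoperimetric inequality gives $P_M(\widetilde\E)\ge c_0\,\tv{\widetilde\E}_M^{\frac{N-1}N}$ for some $c_0>0$ depending only on $M$ and the chosen atlas. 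Inserting both estimates into the $\Lambda$-minimality inequality yields $\tfrac12 c_0\,\tv{\widetilde\E}_M^{\frac{N-1}N}\le\Lambda\,\tv{\widetilde\E}_M$, hence $\tv{\widetilde\E}_M\ge(c_0/2\Lambda)^N$. Therefore every connected component of $\E$ has total volume at least $\min\{r_0,(c_0/2\Lambda)^N\}>0$, and the lemma follows.

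The delicate point is entirely in the $\Lambda$-minimality step, i.e.\ in making the volume-fixing variations precise in this generality (non-reversible Finsler metric, on a manifold): one has to redistribute volume among \emph{all} chambers at a cost linear in $|v|$, including for chambers that are not adjacent to the exterior, and one has to perform this redistribution away from the component $\widetilde\E$ being deleted. It is here that Theorem~\ref{smalldiam} is convenient, since it confines each small component to a small region of a single chart and therefore far from the finitely many fixed boundary sites used for the volume correction. Everything else is a routine combination of the isoperimetric inequality with the competitor described above.
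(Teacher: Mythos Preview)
Your proposal is correct and follows essentially the same route as the paper: both obtain a positive lower bound (depending on $\E$) on the total volume of each connected component by deleting a putative small component, restoring the volumes via a volume-fixing/$\eps$--$\eps$ deformation away from it, and contradicting minimality through the isoperimetric inequality applied in a chart via Theorem~\ref{smalldiam}. The paper handles the point you call ``delicate'' by quoting the $\eps$--$\eps$ property for clusters from~\cite{PratelliScattaglia2024}, which provides precisely the localized volume adjustment you sketch.
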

\begin{proof}
Let $\E$ be a minimal cluster. First of all, we apply Theorem~\ref{smalldiam} with $\sigma=5^{-N}$ and any atlas $\{\varphi_i,\, U_i,\, V_i\}$ with oscillation smaller than $\sigma$, obtaining a constant $\bar\eps_1$  so that the claim of the Theorem holds. Then, we call $\bar r>0$ a constant such that, for every $p\in M$, there exists some index $i$ such that $p\in V_i$ and $B_p=B(\varphi_i^{-1}(p),\bar r)$ is compactly contained in $U_i$. By~\cite{PratelliScattaglia2024}, the so called $\eps-\eps$ property holds. More precisely, up to decrease the constant $\bar r$, we have two constants $\bar\eps_2 < \bar\eps_1$ and $C_5>0$ such that, for every $p \in M$ and every $\eps\in\R^m$ with $|\eps|<\bar\eps_2$, there exists another cluster $\E'$ with $\E'=\E$ in $\varphi_i(B_p)$, and
\begin{align}\label{PraSca}
|\E'|=|\E|+\eps\,, && P_M(\E') \leq P_M(\E) + C_5 |\eps|\,.
\end{align}
Without loss of generality, we can assume that
\begin{equation}\label{smallr}
\bar\eps_2< \bigg(\frac{\bar r}{4C_4}\bigg)^N \wedge \bigg(\frac{1-5^{-N}}{C_2^- \, C_5}\bigg)^N\, N^N\omega_N\,.
\end{equation}
We want to show that every connected component of $\E$ has total volume at least $\bar\eps_2$; of course, this will conclude the thesis, because as an obvious consequence the connected components will be at most $\tv\E/\bar\eps_2$. Let us then suppose that $\widetilde\E$ is a connected component of $\E$ with total volume $\tv{\widetilde\E}<\bar\eps_2$. Since $\bar\eps_2<\bar\eps_1$, by Theorem~\ref{smalldiam} we know that $\widetilde\E$ is entirely contained in some $V_i$, and we can call $\F=\varphi_i^{-1}(\widetilde\E) \comp U_i$. Moreover, by~(\ref{smallr})
\[
\diam(\F) \leq 4C_4\tv \F^{1/N} < 4 C_4 \bar\eps_2^{1/N} < \bar r\,,
\]
and then, calling $p$ an arbitrary point of $\widetilde\E$, $\F$ is compactly contained in the ball $B_p$ defined above. Calling $\eps=|\F|$, the $\eps-\eps$ property provides us with a cluster $\E'$ which coincides with $\E$ in $\varphi_i(B_p)$, and such that~(\ref{PraSca}) holds. In particular, $\widetilde\E$ is a connected component also for $\E'$. We can then define $\widehat\E= \E' \setminus \widetilde\E$, so that by construction and by~(\ref{PraSca}) we have $|\widehat\E|=|\E|$. Since $\E$ is a minimal cluster, this implies $P_M(\widehat\E)\geq P_M(\E)$. Thus, using~(\ref{PraSca}), we have
\begin{equation}\label{generic}
0\leq P_M(\widehat\E)-P_M(\E)=P_M(\E') - P_M(\widetilde\E)- P_M(\E)\leq C_5 |\eps| - P_M(\widetilde\E)\,.
\end{equation}
Let us now evaluate $P_M(\widetilde\E)$. Since the atlas has oscillation smaller than $\sigma=5^{-N}$, for every $x\in U_i$ and every $\nu\in\S^{N-1}$ we have $h_i(x,\nu)\geq (1-\sigma)\|\nu\|^*_{\varphi_i(x)}\geq (1-\sigma)/C_2^-$. Consequently,
\[
P_M(\widetilde\E) \geq \frac{1-\sigma}{C_2^-}\, \H^{N-1}(\partial^*\F)
\geq \frac{1-\sigma}{C_2^-}\, N\omega_N^{1/N} \tv\F^{\frac{N-1}N}
=\frac{1-\sigma}{C_2^-}\, N\omega_N^{1/N} |\eps|^{\frac{N-1}N}\,,
\]
and inserting this estimate in~(\ref{generic}) we find
\[
|\eps|\geq \bigg(\frac{1-5^{-N}}{C_2^- \, C_5}\bigg)^N\, N^N\omega_N \,.
\]
Since this is in contradiction with~(\ref{smallr}), we have proved that a connected component of $\E$ with total volume less thann $\bar\eps_2$ cannot exist, thus the proof is completed.
\end{proof}

It is important to notice that the constant $\bar\eps_2$ of the above lemma in principle depends on the cluster $\E$. Therefore, the bound on the number of connected components is not uniform. However, this is enough to allow us to perform a homothethy argument to prove Theorem~\mref{main1}.

\begin{proof}[Proof of Theorem~\mref{main1}]
Let $\E$ be a $m$-minimal cluster. By Lemma~\ref{fmcc}, we know that $\E$ is done by finitely many connected components, that we call $\E_1,\, \E_2,\, \dots\,,\, \E_H$ with some $H\in\N$. Our goal is to show that $H\leq m$, provided that $\tv\E$ is smaller than a suitable constant $\bar\eps$, that we are going to precisely define in~(\ref{defbareps}), only depending on $M$ and $m$. First of all, we apply Theorem~\ref{smalldiam} with constant $\sigma=5^{-N}$ and an arbitrary atlas $\{\varphi_i,\, U_i,\, V_i\}$ with oscillation smaller than $\sigma$, getting a constant $\bar\eps_1$ such that the claim of the theorem holds. If $\tv\E<\bar\eps_1$, then obviously every connected component $\E_\ell$ of $\E$ has total volume less than $\bar\eps_1$. As a consequence, for every $1\leq \ell\leq H$ there exists some $i=i(\ell)$ such that $\E_\ell$ is compactly contained in $V_i$, and thus we call $\F_\ell=\varphi_i^{-1}(\E_\ell)$.

Let $\lambda_\ell\in\R$, for $1\leq \ell\leq H$, be some constants, to be specified later. For $\delta\in (-1/2,1/2)$ and for each $1\leq \ell\leq H$, we can fix a point $x_\ell$ in $\F_\ell$ and define the modified cluster $\F_\ell^\delta\subseteq\R^N$ as
\[
\F_\ell^\delta = x_\ell +  \big( 1+\delta\lambda_\ell\big)^{1/N}\, (\F_\ell-x_\ell)\,.
\]
In other words, we keep the point $x_\ell$ fixed and we perform a homothety of ratio $(1+\delta\lambda_\ell)^{1/N}$. Since the components $\F_\ell$ are finitely many, if $|\delta|$ is small enough then each cluster $\F_\ell^\delta$ is still compactly contained in $U_i$, and the clusters $\E^\delta_\ell=\varphi_i(\F_\ell^\delta)$ are still pairwise disjoint. Therefore, we can call $\E^\delta=\cup_{\ell=1}^H \E_\ell^\delta$, and the clusters $\E_\ell^\delta$ are the connected components of $\E^\delta$. The volume of $\E^\delta$ is easily calculated as
\[
\big|\E^\delta\big|_M = \sum\nolimits_{\ell=1}^H |\F_\ell^\delta| 
= \sum\nolimits_{\ell=1}^H \big(1+\delta\lambda_\ell) |\F_\ell|
= |\E|_M + \delta \Big( \sum\nolimits_{\ell=1}^H \lambda_\ell |\F_\ell|\Big)\,.
\]
Now, keep in mind that the $|\F_\ell|$'s are $H$ vectors in $\R^m$. As a consequence, if $H>m$ there exists a nontrivial choice of $\lambda_\ell$ so that
\[
\sum\nolimits_{\ell=1}^H \lambda_\ell |\F_\ell| = 0\,.
\]
With this choice, for every $|\delta|\ll 1$ the cluster $\E^\delta$ has the same volume as $\E$, so it can be used as a competitor. Let us now estimate the perimeter of $\E$. To do so, for every $1\leq \ell\leq H$ we call $E_{\ell,j}$ for $1\leq j\leq m$ the components of $\E_\ell$, and similarly $E^\delta_{\ell,j}$ are the components of $\E^\delta_\ell$. In other words, we write $\E_\ell=(E_{\ell,1},\, E_{\ell,2},\, \dots\,,\, E_{\ell,m})$ and $\E^\delta_\ell=(E^\delta_{\ell,1},\, E^\delta_{\ell,2},\, \dots\,,\, E^\delta_{\ell,m})$. By definition, we have then
\begin{equation}\label{recall1}
P_M\big(\E^\delta_\ell) =  \frac 12\, \sum_{j=1}^m P_M(E^\delta_{\ell,j})\,,
\end{equation}
and by construction each $E^\delta_{\ell,j}$ satisfies
\begin{equation}\label{homothety}
E^\delta_{\ell,j} = x_\ell + \big( 1+\delta\lambda_\ell\big)^{1/N}\, (E_{\ell,j}-x_\ell)\,.
\end{equation}
Calling then for simplicity of notation $\nu(x)$ the unit outer normal vector to $E^\delta_{\ell,j}$ at $x\in \partial^* E^\delta_{\ell,j}$, with the change of variables $x=\tau(y)=x_\ell+( 1+\delta\lambda_\ell)^{1/N}(y-x_\ell)$ we have
\[
P_M(E^\delta_{\ell,j}) = \int_{\partial^* E^\delta_{\ell,j}} h_i(x,\nu(x))\, d\H^{N-1}(x)
=\big( 1+\delta\lambda_\ell\big)^{\frac{N-1}N}\, \int_{\partial^* E_{\ell,j}} h_i\big(\tau(y),\nu((\tau(y))\big) \, d\H^{N-1}(y)\,.
\]
Notice now that, by~(\ref{homothety}), the unit normal vector $\nu(y)$ to $E_{\ell,j}$ at $y\in \partial^* E_{\ell,j}$ coincides with $\nu(\tau(y))$. As a consequence, if we define
\[
\gamma(y) = h_i\big(\tau(y),\nu((\tau(y))\big) - h_i\big(y,\nu(y)\big)
= h_i\big(\tau(y),\nu(y)\big) - h_i\big(y,\nu(y)\big)\,,
\]
the above equality gives
\begin{equation}\label{starthere}
P_M(E^\delta_{\ell,j})=\big( 1+\delta\lambda_\ell\big)^{\frac{N-1}N}\, \int_{\partial^* E_{\ell,j}} h_i(y,\nu(y))+\gamma(y) \, d\H^{N-1}(y)\,.
\end{equation}
We need now to expand the above term up to the second order for $\delta\approx 0$. We begin with the function $\tau$: by definition,
\[
\tau(y) = y + \frac {\lambda_\ell}N\, (y-x_\ell) \delta + \frac {(1-N)\lambda_\ell^2}{2N^2}\, (y-x_\ell)\delta^2 + o(\delta^2)\,.
\]
Let us now pass to the expansion of $\gamma$: writing for brevity $\psi(z)=\psi_{i,\nu}(z)=h_i(z,\nu)$, we have
\[\begin{split}
\gamma(y) &=h_i(\tau(y),\nu(y)) - h_i(y,\nu(y))=\psi(\tau(y))-\psi(y)\\
&= \nabla \psi(y) \cdot (\tau(y) - y) + \frac 12\, (\tau(y)-y)^t D^2\psi(y) (\tau(y)-y)\\
&= \nabla \psi(y) \cdot \bigg(\frac {\lambda_\ell}N\, (y-x_\ell) \delta+ \frac {(1-N)\lambda_\ell^2}{2N^2}\, (y-x_\ell)\delta^2\bigg)\\
& \phantom{= \nabla \psi(y) \cdot \bigg(\frac {\lambda_\ell}N\, (y-x_\ell) \delta\,}+\frac {\lambda_\ell^2}{2N^2}\, (y-x_\ell)^t D^2\psi(y)  (y-x_\ell)\,\delta^2+o(\delta^2)\,.
\end{split}\]
Finally, we observe that
\[
(1+\delta\lambda_\ell)^{\frac{N-1}N} = 1 + \frac{N-1}N\, \lambda_\ell\delta -\frac{N-1}{N^2}\, \lambda_\ell^2\delta^2 + o (\delta^2)\,.
\]
Inserting the last two estimates in~(\ref{starthere}), we obtain
\begin{equation}\label{recall2}
P_M(E^\delta_{\ell,j}) = P_M(E_{\ell,j}) + \alpha \delta + \beta \delta^2 + o(\delta^2)\,,
\end{equation}
with two suitable constants $\alpha$ and $\beta$. We are not interested in evaluating $\alpha$; concerning $\beta$, instead, we have
\begin{equation}\label{hereisbeta}\begin{split}
\beta=\frac{\lambda_\ell^2}{N^2} \int_{\partial^* E_{\ell,j}} -(N-1)h_i(y,\nu(y)) &+ \frac{N-1}2\,\nabla\psi_{i,\nu(y)}(y)\cdot (y-x_\ell)\\
&+\frac 12\,(y-x_\ell)^t D^2\psi_{i,\nu(y)}(y)  (y-x_\ell)\,d\H^{N-1}(y)\,.
\end{split}\end{equation}
Now, since the oscillation of the atlas is at most $\sigma=5^{-N}$, for every $z\in U_i$ and every unit vector $\nu\in\S^{N-1}$ we have
\[
h_i(z,\nu) \geq (1-\sigma)\|\nu\|^*_{\varphi_i(z)}\geq \frac{1-\sigma}{C_2^-}\geq \frac 1{2C_2^-}\,.
\]
On the other hand, keep in mind that $x_\ell$ is a fixed point in $\F_\ell$, and by Theorem~\ref{smalldiam} we know that $\diam(\F_\ell)\leq 4 C_4\tv{\F_\ell}^{1/N}$, so in particular $\diam(\F_\ell)\leq 4 C_4\bar\eps^{1/N}$ since we are assuming that $\tv\E\leq\bar\eps$. But then, for every $y\in\partial^* E_{\ell,j}$ we have $|y-x_\ell| \leq 4 C_4 \bar\eps^{1/N}$, and then, since the definition~(\ref{defbareps}) of $\bar\eps$ in particular will satisfy $\bar\eps\leq (4C_4)^{-N}$, we get
\[\begin{split}
\bigg|\frac{N-1}2\,\nabla\psi_{i,\nu(y)}(y)\cdot (y-x_\ell)&+\frac 12\,(y-x_\ell)^t D^2\psi_{i,\nu(y)}(y)  (y-x_\ell)\bigg|\\
&\leq \bigg(\frac {N-1}2\, \|\nabla\psi_{i,\nu(y)}\|_{L^\infty} + \frac 12\, \|D^2\psi_{i,\nu(y)}\|_{L^\infty} \bigg) \,4C_4\bar\eps^{1/N}
=C_6\bar\eps^{1/N}\,,
\end{split}\]
where
\[
C_6 := 4 C_4  \max_{i,\, \nu}\bigg\{\bigg(\frac {N-1}2\, \|\nabla\psi_{i,\nu}\|_{L^\infty} + \frac 12\, \|D^2\psi_{i,\nu}\|_{L^\infty} \bigg) \bigg\}\,.
\]
Notice that the constant $C_6$ is well defined by compactness (and since the manifold is of class ${\rm C}^2$), and it only depends on the atlas, thus only on the manifold and on its dimension $N$, since the atlas was generic. Inserting the above estimates in~(\ref{hereisbeta}) gives
\[
\beta \leq \frac{\lambda_\ell^2}{N^2} \int_{\partial^* E_{\ell,j}} -\frac{N-1}{2C_2^-} + C_6\bar\eps^{1/N}\,d\H^{N-1}(y)\,.
\]
We are finally in position to define the constant $\bar\eps$ as
\begin{equation}\label{defbareps}
\bar\eps := \min \bigg\{ \bar\eps_1,\, \bigg(\frac 1{4C_4}\bigg)^N,\,\bigg(\frac{N-1}{3C_2^- C_6}\bigg)^N\bigg\}\,,
\end{equation}
which by construction only depends on the manifold $M$ and its dimension $N$. With this choice, we have that the constant $\beta$ is negative, and actually strictly negative unless $\lambda_\ell=0$. As a consequence, in a suitably small neighborhood of $\delta=0$ the function $\delta\mapsto P_M(\E_\ell^\delta)$ is strictly concave whenever $\lambda_\ell\neq 0$ (while otherwise it is constant). Since the choice of the coefficients $\{\lambda_\ell\}$ is nontrivial, we deduce that the perimeter of the cluster $\E^\delta$ is a strictly concave function of $\delta$ in a small neighborhood of $\delta=0$. Since all the clusters $\E^\delta$ have the same volume and $\E=\E^0$ is a minimal cluster, this is impossible. The contradiction show that necessarily $H\leq m$, and this concludes the thesis.
\end{proof}

A quick discussion has to be done regarding the bound on the diameter given by~(\ref{bounddiam}).

\begin{remark}\label{whatisdiam}
If $\E$ is a minimal cluster with total volume less than $\bar\eps$, in Theorem~\mref{main1} we have proved that it might have at most $m$ connected components $\E_\ell$. Moreover, each component is compactly contained in some $V_i$, and then $\F_\ell=\varphi_i^{-1}(\E_\ell)$ is a cluster in $\R^N$. According to Theorem~\ref{smalldiam}, the diameter of $\F_\ell$ in $\R^N$ can be bounded by $4C_4\tv{\F_\ell}^{1/N}=4C_4\tv{\E_\ell}^{1/N}$. We can observe that this estimate ensures the validity of~(\ref{bounddiam}) on the diameter of $\E_\ell$. Before doing that, we have of course to define the diameter of sets in $M$; since clearly the diameter of a set is defined as the minimal distance between points in that set, and the distance between two points is the minimal length of the regular curves connecting them, this amounts in defining the length of vectors in the tangent space $TM$. There are in fact three possible choices, which are all meaningful: one is to use the standard distance on the manifold, which then does not depend on the Finslerian structure; a second one is to use the norm $\|\cdot\|_p$ on $T_pM$ for each $p\in M$; and the third one is to use the dual norm $\|\cdot\|^*_p$ on $T_pM$. There can be various reasons to prefer one between these possible definitions; however, since the manifold is compact, all these choices are equivalent. Therefore, we do not need to specify any of these definitions, because the validity of~(\ref{bounddiam}) is true for each of them, with just a different definition of the constant $K$.
\end{remark}

\section{The Riemannian case: proof of Theorem~\mref{main2}\label{secma2}}

This section is devoted to prove Theorem~\mref{main2}, which says that sufficiently small minimal clusters are actually connected if the manifold is a Riemannian one. Many of our arguments actually work whenever $M$ is a fixed-norm manifold; as a consequence, we will make this assumption through all this section (and we will underline when we are going to use that $M$ is actually Riemannian). Thus, there is a given norm $\|\cdot\|^*$ in $\R^N$ such that for every $p\in M$ one has $\|\cdot\|_p^* = \|\cdot\|^*$, up to rotations. For brevity, we call $P_*$ the perimeter in $\R^N$ associated to this norm, that is, for every set $F\subseteq\R^N$ we define
\[
P_*(F)  = \int_{\partial^* F} \|\nu_F(x)\|^* \, d\H^{N-1}(x)\,,
\]
and for every cluster $\F\subseteq\R^N$ we define $P_*(\F)$ as usual via~(\ref{perclu}). We call then $J:(\R^+)^m\to\R^+$ the isoperimetric function corresponding to this perimeter, that is, for every $v\in(\R^+)^m$ we set
\[
J(v) = \min \Big\{ P_*(\F),\, \F\subseteq\R^N,\, |\F|=v\Big\}\,.
\]
Notice that, by the homogeneity of the perimeter, for every $\lambda>0$ and every $v\in(\R^+)^m$ we have $J(\lambda v) = \lambda^{N-1} J(v)$, thus it is enough to study the function $J$ on the vectors with total norm equal to $1$. We start now with the following easy observation.
\begin{lemma}\label{largesmall}
For every small $\delta>0$ there exists $\mu>0$ such that, for any cluster $\F\subseteq\R^N$, if the largest connected component of $\F$ has total volume less than $(1-\delta)\tv\F$, then
\[
P_*(\F) \geq (1+\mu) \, J(|\F|)\,.
\]
\end{lemma}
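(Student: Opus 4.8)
The plan is to argue by contradiction and compactness, exploiting the scaling $J(\lambda v)=\lambda^{N-1}J(v)$ to reduce to clusters of total volume $1$. Suppose the statement fails for some $\delta>0$: then there is a sequence of clusters $\F^{(k)}\subseteq\R^N$ with $|\F^{(k)}|=v^{(k)}$ such that the largest connected component of $\F^{(k)}$ has total volume at most $(1-\delta)\tv{\F^{(k)}}$, yet $P_*(\F^{(k)})\leq(1+1/k)\,J(v^{(k)})$. By homogeneity of $P_*$ and $J$ we may rescale so that $\tv{\F^{(k)}}=1$ for every $k$; after passing to a subsequence, $v^{(k)}\to v$ with $|v|=1$, and since $J$ is continuous on the unit simplex, $P_*(\F^{(k)})\to J(v)$. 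Thus $\{\F^{(k)}\}$ is a minimizing sequence for the volume constraint $v$; the point is that each $\F^{(k)}$ is \emph{almost} minimal but fails to be ``almost connected'' at a fixed scale $\delta$, and we want to derive a contradiction.

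The first key step is to control the diameter of a near-minimizer for $P_*$, so that no mass escapes to infinity and we retain compactness. I would invoke Definition~\ref{defsmd} (the $\sigma$-minimal deviation, applied to $P_p=P_*$ since we are in a fixed-norm manifold): for $\sigma=\delta/2$, say, there is $\eta>0$ such that any cluster of total volume $1$ with $P_*\leq(1+\eta)J(v)$ has all but a portion of total volume $\sigma$ contained in a single ball of diameter $2d$. For $k$ large enough $1/k<\eta$, so each $\F^{(k)}$ has total volume at least $1-\delta/2$ inside some ball $B_k$ of radius $d$; after a translation we may assume $B_k=B(0,d)$. Then the restricted clusters $\F^{(k)}\res B(0,2d)$ live in a fixed bounded region, have uniformly bounded perimeter, and hence (by BV compactness, applied componentwise) converge in $L^1$, up to a subsequence, to a limit cluster $\F$. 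By lower semicontinuity of $P_*$ and the uniform perimeter bound on the ``escaping'' part (whose total volume tends to $0$, hence whose perimeter contribution is controlled by the isoperimetric inequality and tends to $0$ as well), we get $|\F|=v$ and $P_*(\F)\leq\liminf P_*(\F^{(k)})=J(v)$, so $\F$ is a genuine minimal cluster for $P_*$ with volume $v$.

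The second key step, and the main obstacle, is to pass the ``not almost connected'' property to the limit: I must show that the limit minimizer $\F$ also has its largest connected component of total volume $\leq(1-\delta/2)$ (say), which contradicts the known fact that minimal clusters for a fixed norm in $\R^N$ are connected (the Finsler/Euclidean isoperimetric theory quoted in the introduction, e.g.~\cite{FigalliMaggi2011,derosa2023localminimizersanisotropicisoperimetric}). The subtlety is that $L^1$ convergence does not a priori preserve connected components — components can merge in the limit or mass can redistribute among them. To handle this I would work with a fixed finite partition: by the diameter control, for $k$ large the cluster $\F^{(k)}$ decomposes as a piece of total volume $\geq 1-\delta/2$ inside $B(0,d)$ plus a remainder of total volume $\leq\delta/2$; moreover, by Theorem~\ref{smalldiam} / Lemma~\ref{fmcc}-type reasoning each connected component that carries nonnegligible mass is itself of bounded diameter, so only finitely many components carry mass above a threshold and their number is bounded uniformly in $k$. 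Passing to a further subsequence, the ``mass vector'' of the large components stabilizes, and one sees that the limit $\F$ must contain at least two components each of total volume bounded below by a positive constant depending only on $\delta$ — contradicting connectedness of $\F$. This last merging-analysis is where care is needed; everything else is a routine compactness-and-lower-semicontinuity argument together with the already-established regularity and diameter bounds. Once the contradiction is reached, the lemma follows, with $\mu$ the (strictly positive) quantity forced by the uniform contradiction over all $k$ large.
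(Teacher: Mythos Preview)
Your overall strategy (rescale to total volume $1$, contradict with a minimizing sequence, pass to a limit, and use that $P_*$-minimal clusters are connected) matches the paper's, but the argument has a genuine gap at exactly the point you flag as ``the main obstacle''.

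First, two smaller errors. With $\sigma=\delta/2$ fixed, the mass outside $B_k$ is only bounded by $\delta/2$; it does \emph{not} tend to $0$, so your claim that the escaping perimeter vanishes is unjustified, and you cannot conclude $|\F|=v$ from the limit inside $B(0,2d)$ alone. Second, Theorem~\ref{smalldiam} and Lemma~\ref{fmcc} concern connected components of \emph{minimal} clusters in the manifold $M$; they do not give diameter or finiteness control for components of a merely near-minimal cluster in $\R^N$, so invoking them here is circular or inapplicable.

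More importantly, even granting diameter control on the large components, your merging analysis is not an argument: $L^1$ limits really can fuse components, and nothing in your sketch prevents the limit $\F$ from being connected. The paper sidesteps this entirely with a simple replacement/separation trick that you are missing. Writing $\F_n=\F_n'\cup\F_n''$ with $\F_n'$ the largest component, one may \emph{without loss of generality} replace $\F_n'$ by a $P_*$-minimal cluster of volume $|\F_n'|$ and $\F_n''$ by a $P_*$-minimal cluster of volume $|\F_n''|$; this can only decrease $P_*(\F_n)$, so the near-minimality $P_*(\F_n)\leq(1+1/n)J(v_n)$ persists, and now each piece is connected with diameter at most $d$. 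Using translation invariance of $P_*$, place $\F_n'\subset B(0,d)$ and $\F_n''\subset B(3d\,{\rm e}_1,d)$. The limit then lives in two \emph{disjoint} closed balls, with $\tv{\F'}\in[1/2,1-\delta]$, so it is automatically disconnected; lower semicontinuity gives $P_*(\F)\leq J(v)$, contradicting connectedness of minimizers. This replacement step is the missing idea.
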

\begin{proof}
As noticed above, the homogeneity allows to restrict ourselves to clusters with total volume equal to $1$. Hence, we let $v\in(\R^+)^m$ be any vector with $|v|_1=\sum_{i=1}^m v_i=1$. By standard concentration and compactness arguments, we know that there exist minimal clusters in $\R^N$ with volume $v$, and that each such minimal cluster is connected. By continuity and compactness, the minimal perimeter of clusters $\F$ with $|\F|=v$ and with the largest connected component having total volume less than $1-\delta$ is strictly greater than $J(v)$, thus we can write it as $(1+\mu)\, J(v)$ for some $\mu>0$ depending on $v$ (and, of course, on $\delta$). The claim is equivalent to say that we can find a suitable constant $\mu$, only depending on $\delta$ and on the norm, but not on $v$.

Let us assume by contradiction that this is false. Then, we can find a sequence of volumes $v_n$ with $|v_n|_1=1$, and a sequence of clusters $\F_n$ with $|\F_n|=v_n$, in such a way that for every $n$ we have $\F_n=\F_n'\cup \F_n''$, being $\F_n'$ the largest connected component of $\F_n$, and so that for every $n$ one has
\begin{align}\label{muto0}
\tv{\F_n'}\leq 1-\delta\,, && P_*(\F_n)\leq \bigg( 1+ \frac 1n\bigg)\, J(v_n)\,.
\end{align}
Without loss of generality, we can assume that $\F_n'$ is a minimal cluster with volume $|\F_n'|$ and $\F_n''$ is a minimal cluster with volume $|\F_n''|$, in particular they are both connected and moreover $\tv{\F_n'}\geq 1/2$. Calling $d$ the maximal diameter of $M$ (in the sense of Definition~\ref{defmadi}), we have that
\begin{align*}
\diam(\F_n') \leq d \tv{\F_n'}^{\frac 1{N-1}} \leq d\,, && \diam(\F_n'') \leq d \tv{\F_n''}^{\frac 1{N-1}} \leq d\,.
\end{align*}
Since the ambient space is $\R^N$ with a fixed norm, translations leave volume and perimeter invariant; therefore, keeping in mind that $\F_n'$ and $\F_n''$ are different connected components of $\F_n$, by the above estimate on the diameters we can assume without loss of generality that
\begin{align}\label{staythere}
\F_n' \comp B(0,d) \,, && \F_n'' \comp B(P,d)\,,
\end{align}
where $P$ is any point in $\R^N$ with distance larger than $2d$ from the origin, for instance we can take $P=3d{\rm e}_1$. Up to pass to a subsequence, we can assume that $v_n \to v$, of course with $|v|_1=1$, and that the clusters $\F_n'$ and $\F_n''$ converge weakly* in the BV sense to some clusters $\F'$ and $\F''$. Since the convergence holds in particular in $L^1_{\rm loc}$, by~(\ref{staythere}) and~(\ref{muto0}) we know that
\begin{align*}
\F' \subseteq B(0,d) \,, && \F'' \subseteq B(P,d)\,, && |\F'\cup\F''| = v,\, && \tv{\F'} = \lim_{n\to\infty} \tv{\F_n'} \in \big[ 1/2 , 1-\delta]\,.
\end{align*}
Moreover, by the lower semicontinuity of the perimeter, again using~(\ref{muto0}) and the obvious fact that $J$ is continuous we have
\[
P_* (\F) \leq \liminf_{n\to\infty} P_*(\F_n)  \leq \liminf_{n\to\infty} \bigg( 1+ \frac 1n\bigg)\, J(v_n) = J(v)\,.
\]
In other words, $\F$ is a minimal cluster of volume $v$. Since $\F$ is not connected, this is impossible, and we have found the searched contradiction.
\end{proof}

The above lemma allows us to present the first step in the proof of Theorem~\mref{main2}, that is, we exclude that arbitrarily small clusters can have a largest connected component for which the percentage of the total volume is not going to $1$. More precisely, the result is the following.
\begin{lemma}\label{easycase}
Let $M$ be as in Theorem~\mref{main2}. For every $\delta>0$ there exists some $\eps_\delta>0$ such that for any minimal cluster $\E$ with $\tv\E<\eps_\delta$ there is a connected component $\E_1$ of $\E$ satisfying $\tv{\E_1}\geq (1-\delta) \tv\E$.
\end{lemma}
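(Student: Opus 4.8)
The plan is to argue by contradiction and contraposition, using Lemma~\ref{largesmall} to convert a ``spread out'' minimal cluster into one with perimeter strictly above the infinitesimal profile, and then using the small-diameter result of Section~\ref{secsmalldiam} together with the small oscillation of a fine atlas to transfer this quantitative gap back to the manifold, producing a cheaper competitor. First I would fix $\delta>0$, let $\mu=\mu(\delta)>0$ be the constant given by Lemma~\ref{largesmall}, choose $\sigma$ (hence $\rho$) small enough that all perimeter errors coming from the oscillation of the atlas are much smaller than $\mu$ (say $\sigma\ll\mu$), fix an atlas with oscillation bounded by $\rho$, and let $\bar\eps_1$ be the constant produced by Theorem~\ref{smalldiam} for this $\sigma$. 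I would then set $\eps_\delta\le\bar\eps_1$, with further smallness to be imposed below, and suppose towards a contradiction that $\E$ is a minimal cluster with $\tv\E<\eps_\delta$ whose largest connected component $\E_1$ satisfies $\tv{\E_1}<(1-\delta)\tv\E$.

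Next I would exploit the structure coming from Theorem~\ref{smalldiam}: every connected component of $\E$ has total volume less than $\bar\eps_1$, so each is compactly contained in some chart $V_{i}$ and, pulled back, has diameter at most $4C_4$ times the $N$-th root of its volume; by taking $\eps_\delta$ small this diameter is below the $\bar r$ of the covering, so each component can be translated (within its chart) to sit inside a small fixed ball. The idea is then to \emph{rescale the whole configuration to total volume $1$} and \emph{rearrange the components so that they do not overlap}, obtaining a single cluster $\F\subseteq\R^N$ with $|\F|$ equal to the normalized volume vector of $\E$, whose connected components are exactly the (rescaled, translated) pull-backs of the components of $\E_i$, and whose perimeter measured with the fixed norm $P_*$ is, up to a factor $1\pm O(\rho)$ (because each component sits in a single chart of small oscillation), equal to $\tv\E^{-(N-1)/N}P_M(\E)$. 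Since the largest component of $\F$ has total volume $<1-\delta$, Lemma~\ref{largesmall} gives $P_*(\F)\ge(1+\mu)J(|\F|)$.

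Finally I would build the competitor: let $\G\subseteq\R^N$ be a \emph{connected} minimal cluster for $P_*$ with $|\G|=|\F|$, rescale it back to total volume $\tv\E$, and insert it into $M$ inside a single chart (possible since its diameter is controlled by $d\,\tv\E^{1/N}<\bar r$), replacing all of $\E$. Call $\E'$ the resulting cluster; it has the same volume vector as $\E$, so by minimality $P_M(\E)\le P_M(\E')$. Converting both sides to the fixed-norm perimeter of the normalized clusters, and absorbing all the $O(\rho)$ oscillation errors into a factor like $(1+C\rho)$, one gets
\[
(1-C\rho)\,P_*(\F)\ \le\ \tv\E^{-\frac{N-1}N}P_M(\E)\ \le\ \tv\E^{-\frac{N-1}N}P_M(\E')\ \le\ (1+C\rho)\,P_*(\G)=(1+C\rho)\,J(|\F|),
\]
which combined with $P_*(\F)\ge(1+\mu)J(|\F|)$ forces $(1-C\rho)(1+\mu)\le 1+C\rho$; since $J(|\F|)>0$ (total volume is $1$) and $\rho$ was chosen with $C\rho\ll\mu$, this is a contradiction, proving the lemma. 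The main obstacle I expect is the bookkeeping in the rearrangement step: one must check that translating finitely many small components to pairwise-disjoint positions inside charts — both on the original scale in $M$ and on the rescaled model in $\R^N$ — genuinely leaves volumes unchanged and changes perimeters only by a factor $1\pm O(\rho)$, for which the key inputs are precisely the small-diameter bound (so each component fits in one chart), the invariance of $P_*$ under translations, and the oscillation bound of Definition~\ref{defosc} (so $P_M$ restricted to a chart agrees with $P^i_z$, hence with $P_*$, up to $1\pm\rho$); the number of components being finite but a priori not uniformly bounded is harmless because we only use that each individual component is small and chart-contained.
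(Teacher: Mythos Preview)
Your proposal is correct and follows essentially the same route as the paper: fix $\mu=\mu(\delta)$ from Lemma~\ref{largesmall}, choose $\sigma$ small compared to $\mu$, use Theorem~\ref{smalldiam} to pull back each connected component into a chart, translate the pull-backs apart in $\R^N$ to form a cluster $\F$ to which Lemma~\ref{largesmall} applies, and compare against a $P_*$-minimal cluster $\G$ of the same volume inserted back into a single chart; the oscillation bound turns the minimality inequality $P_M(\E)\le P_M(\widetilde\E)$ into $(1-O(\sigma))(1+\mu)\le 1+O(\sigma)$, a contradiction. The only cosmetic differences are that the paper works directly at the original volume scale (no rescaling to total volume $1$, since Lemma~\ref{largesmall} is already stated for arbitrary $\F$) and explicitly invokes Lemma~\ref{fmcc} for the finiteness of components, though as you note this finiteness is not essential to the estimate.
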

\begin{proof}
Let $\mu=\mu(\delta)$ be given by Lemma~\ref{largesmall}, and let us apply Theorem~\ref{smalldiam} with
\begin{equation}\label{sigmanow}
\sigma = 4^{-N} \wedge \frac \mu 4
\end{equation}
and a generic atlas with oscillation smaller than $\sigma$. Thus, we obtain a constant $\bar\eps_1$, which depends on $M,\, N$ and $\delta$, and an atlas as in the claim of the theorem, in particular with oscillation smaller than $\sigma$. Let now $\eps_\delta\leq \bar\eps_1$ be a constant, again only depending on $M,\,N$ and $\delta$, to be specified later, and let $\E$ be a minimal cluster with volume $v=|\E|$ so that the total volume satisfies $\tv\E = |v|_1 <\eps_\delta$. By Lemma~\ref{fmcc}, we know that $\E$ is done by finitely many connected components $\E_1,\, \E_2,\, \dots\,,\, \E_H$ (up to replace $\eps_\delta$ by a possibly smaller constant we could apply Theorem~\mref{main1} to be sure that $H\leq m$, but this would not make any difference in the proof). To obtain the proof, we assume that $\tv{\E_1}<(1-\delta)|v|_1$, being $\E_1$ the connected component with the largest total volume, and we look for a contradiction.\par

For every $1\leq \ell \leq H$ there exists an index $i=i(\ell)$ such that $\E_\ell$ is compactly contained in $V_i$; thus, we can define the cluster $\F_\ell = \varphi_i^{-1}(\E_\ell)$. Notice that each cluster $\F_\ell$ is contained in $U_i\subseteq\R^N$. Up to translations, we can assume that all the clusters $\F_\ell$ are a positive distance apart from each other, and call $\F = \cup_{\ell=1}^H \F_\ell$, which satisfies $|\F|=|\E|=v$. Since $\F_1$ is the connected component of $\F$ with the largest total volume, and $\tv{\F_1}=\tv{\E_1}<(1-\delta)|v|_1=(1-\delta)\tv\F$, by Lemma~\ref{largesmall} we have that
\[
P_*(\F) \geq (1+\mu) J(v)\,.
\]
Keeping in mind that the atlas has oscillation smaller than $\sigma$, we have then
\begin{equation}\label{cararrives}
P_M(\E) = \sum_{\ell=1}^H P_M(\E_\ell) \geq (1-\sigma) \sum_{\ell=1}^H P_*(\F_\ell) = (1-\sigma) P_*(\F) \geq (1-\sigma)(1+\mu) J(v)\,.
\end{equation}
Now, we let $\G\subseteq\R^N$ be a minimal cluster of volume $v$ in $\R^N$ with the norm $\|\cdot\|^*$. By rescaling, we have $\diam(\G)\leq d (\tv\G)^{1/N} \leq d \eps_\delta^{1/N}$, where $d$ is the maximal diameter for $M$ (see Definition~\ref{defmadi}), Up to replace $\eps_\delta$ by a smaller constant (depending only on the atlas, thus on $M$, $N$ and $\delta$), we can then assume that $\G$ is compactly contained in the chart $U_1$, up to a translation. Then, we can call $\widetilde\E=\varphi_1(\G)$, and notice that by construction $\widetilde\E\subseteq M$ is a cluster with volume $|\widetilde\E|=v=|\E|$. Again keeping in mind that the atlas has oscillation smaller than $\sigma$, we have then
\[
P_M(\widetilde\E)\leq (1+\sigma) P_*(\G) =(1+\sigma) J(v)\,.
\]
Putting this estimate together with~(\ref{cararrives}), and recalling that $\E$ is a minimal cluster, we have then
\[
P_M(\E) \leq P_M(\widetilde\E) \leq (1+\sigma) J(v)
\leq \frac{1+\sigma}{(1-\sigma)(1+\mu)} \, P_M(\E)\,,
\]
which implies
\[
1+\sigma \geq (1-\sigma)(1+\mu)\,.
\]
Since this inequality is not compatible with the definition~(\ref{sigmanow}), we have found the searched contradiction, and this concludes the proof.
\end{proof}

The above lemma says that minimal clusters with extremely small total volume must be ``almost connected'', in the sense that the percentage of the total volume which does not belong to the largest connected component must be arbitrarily small if the total volume is very small. To prove Theorem~\mref{main2} we have to exclude the existence of very small (in percentage) connected components, and this will be the hardest part of the proof. We start with a technical result, which says that if a set $E$ contained in an arbitrarily high prism is flat up to a very small error, then the same is true in a smaller prism with a fixed height, not depending on the error. More precisely, our result is the following.
\begin{lemma}\label{qutr}
There exists a purely dimensional constant $H=H(N)$ such that the following holds. Let $Q=(-1/2,1/2)^{N-1}\subseteq \R^{N-1}$, let $W>1$ and $\rho\ll 1$ be two constants, and let $E\subseteq Q\times (-W,W)$ be a set satisfying
\begin{gather}
\H^{N-1}\Big( \partial^*E \cap \big(Q\times (-W,W)\big)\Big) \leq 1+\rho \,,\label{cond1}\\
\H^{N-1}\Big( E \cap \big(Q\times \{-W\}\big)\Big) \geq 1-\rho\,,\label{cond2}\\
\H^{N-1}\Big( E \cap \big(Q\times \{W\}\big)\Big) \leq \rho\,.\label{cond3}
\end{gather}
Then, there exists an interval $(a,b)\subseteq (-W,W)$ with length at most $H$ such that conditions~(\ref{cond1})--(\ref{cond3}) are satisfied with $(a,b)$ in place of $(-W,W)$ and $Q\times \{a\}$ (resp., $Q\times \{b\}$) in place of $Q\times \{-W\}$ (resp., $Q\times \{W\}$).
\end{lemma}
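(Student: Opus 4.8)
The plan is to reduce everything to the one‑dimensional profile
\[
g(t):=\H^{N-1}\big(E\cap(Q\times\{t\})\big),\qquad t\in(-W,W),
\]
where the slices are taken in the measure‑theoretic sense. By the slicing theory of $BV$ functions, $g$ has bounded variation with $|Dg|\big((-W,W)\big)\le \H^{N-1}\big(\partial^*E\cap(Q\times(-W,W))\big)\le 1+\rho$, and, since $\H^{N-1}(Q)=1$, conditions~(\ref{cond2})--(\ref{cond3}) read $g(-W)\ge1-\rho$ and $g(W)\le\rho$ (as traces). Hence the total decrease of $g$ is at least $1-2\rho$, and combining this with $|Dg|\le 1+\rho$ forces the total \emph{increase} of $g$ to be at most $\tfrac32\rho$: $g$ is monotone decreasing up to an $O(\rho)$ error. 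Writing $\nu=(\nu',\nu_N)$ for the inner normal, a count of the crossings of vertical segments with $\partial^*E$ together with~(\ref{cond2})--(\ref{cond3}) gives $\int_{\partial^*E\cap\Omega}|\nu_N|\,d\H^{N-1}\ge 1-2\rho$ (here $\Omega=Q\times(-W,W)$); since $\sqrt{\nu_N^2+|\nu'|^2}-|\nu_N|\ge\tfrac12|\nu'|^2$ and $\H^{N-1}(\partial^*E\cap\Omega)\le 1+\rho$, this yields the \emph{quadratic} bound $\int_{\partial^*E\cap\Omega}|\nu'|^2\,d\H^{N-1}\le 6\rho$, and in particular, by the coarea formula applied to the height function restricted to $\partial^*E$, $\int_{-W}^{W}\mathrm{Per}(E_t;Q)\,dt=\int_{\partial^*E\cap\Omega}|\nu'|\,d\H^{N-1}\le\sqrt{6\rho(1+\rho)}$.

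Next I would set $a:=\sup\{t:g(t)\ge1-\rho\}$ and $b:=\inf\{t:g(t)\le\rho\}$, using the good representative of $g$ so that the sup and inf are attained with the stated inequalities. The almost‑monotonicity of $g$ gives $-W\le a\le b\le W$ and, for every $t\in(a,b)$, $g(t)\in(\rho,1-\rho)$, hence $\min\big(g(t),1-g(t)\big)>\rho$. The heart of the argument is then to show
\[
b-a\le H(N).
\]
When $N=2$ this is immediate: $Q$ is an interval and $\mathrm{Per}(E_t;Q)\ge1$ whenever $0<|E_t|<1$, so $b-a\le\int_{-W}^{W}\mathrm{Per}(E_t;Q)\,dt\le\sqrt{6\rho(1+\rho)}$. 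For $N\ge3$ one feeds in the relative isoperimetric inequality in $Q\subseteq\R^{N-1}$, $\mathrm{Per}(E_t;Q)\ge c_N\min\big(g(t),1-g(t)\big)^{\frac{N-2}{N-1}}$, which holds for every level set simultaneously; combined with the quadratic bound above through a Poincaré/Sobolev‑type argument (concretely: the ``filling height'' $G(x'):=\H^1\{t\in(a,b):(x',t)\in E\}\in[0,b-a]$ is a $BV$ function on $Q$ with $|DG|(Q)\le\int_{\partial^*E\cap(Q\times(a,b))}|\nu'|\le\sqrt{6\rho(1+\rho)}$, so $G$ — and hence the values of $g$ on $(a,b)$ — cannot spread over a long interval) one obtains a bound of the form $b-a\le C_N\,\rho^{1/(N-1)}$ (up to a logarithm when $N=3$), which is $\le H(N)$ once $\rho$ is small.

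Finally the interval $(a,b)$ produced above satisfies the three required conditions automatically: (\ref{cond1}) holds because $\partial^*E\cap(Q\times(a,b))\subseteq\partial^*E\cap(Q\times(-W,W))$; (\ref{cond2}) holds because $g(a)\ge1-\rho$ by the choice of $a$; and (\ref{cond3}) holds because $g(b)\le\rho$ by the choice of $b$. The main obstacle is precisely the length estimate $b-a\le H(N)$ with $H$ \emph{independent of $\rho$ and $W$}: the naive coarea estimate (using only $\int_{-W}^{W}\mathrm{Per}(E_t;Q)\,dt\lesssim\sqrt\rho$ and $\mathrm{Per}(E_t;Q)\gtrsim\rho^{(N-2)/(N-1)}$ on $(a,b)$) gives only $b-a\lesssim\rho^{(3-N)/(2(N-1))}$, which degenerates as $\rho\to0$ when $N\ge4$, so one genuinely has to exploit the quadratic control $\int_{\partial^*E}|\nu'|^2\lesssim\rho$ (equivalently, the Sobolev/Poincaré inequality applied to all level sets at once) to close the estimate uniformly. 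A secondary, purely technical nuisance is the careful choice of $BV$ representatives and traces near the endpoints, needed to make the sharp inequalities $g(a)\ge1-\rho$ and $g(b)\le\rho$ literally hold.
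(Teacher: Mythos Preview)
Your setup is sound and agrees with the paper's: both define the slice profile $g(t)=v(t)=\H^{N-1}(E\cap(Q\times\{t\}))$, set $a=\sup\{t:g\ge 1-\rho\}$ and $b=\inf\{t:g\le\rho\}$, and invoke the relative isoperimetric inequality on the slices. Your almost-monotonicity observation and the quadratic bound $\int_{\partial^*E}|\nu'|^2\lesssim\rho$ are correct and nice. The difficulty, as you yourself identify, is the length estimate $b-a\le H(N)$ for $N\ge 4$, and here your argument has a genuine gap.

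The filling-height function $G(x')=\H^1\{t\in(a,b):(x',t)\in E\}$ does satisfy $|DG|(Q)\lesssim\sqrt\rho$, but this alone cannot bound $b-a$: nothing prevents $G$ from being nearly constant (so $|DG|\approx 0$) while $b-a$ is large. The assertion that ``$G$ --- and hence the values of $g$ on $(a,b)$ --- cannot spread over a long interval'' is exactly the point at issue, and the Poincar\'e/Sobolev inequality for $G$ does not deliver it. Your claimed output $b-a\le C_N\rho^{1/(N-1)}$ is in fact what Sobolev embedding gives for \emph{graphical} sets $E=\{t<u(x')\}$ (via $\|\nabla u\|_2^2\lesssim\rho$ and the critical exponent $2^*$ in $\R^{N-1}$), but for general finite-perimeter sets you only control $|DG|(Q)$, not $\|\nabla G\|_2$, and the analogy breaks down.

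What the paper does instead is a dyadic layer-cake decomposition. For each $j$ it sets $s_j^-=\sup\{t:g(t)\ge 1-2^j\rho\}$ and $s_j^+=\inf\{t:g(t)\le 2^j\rho\}$, and bounds the length $l_j$ of the shell $(s_j^-,s_{j+1}^-)\cup(s_{j+1}^+,s_j^+)$ separately. The crucial point --- which your global budget $\int|\nu'|\lesssim\sqrt\rho$ misses --- is that the perimeter available in the $j$-th shell is only of order $2^j\rho$, because a projection argument shows the inner region $[s_{j+1}^-,s_{j+1}^+]$ already consumes at least $1-2^{j+2}\rho$ of the total budget $1+\rho$. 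Coupling this scale-dependent budget with the isoperimetric lower bound $\H^{N-2}(\partial^*E\cap(Q\times\{t\}))\gtrsim(2^j\rho)^{(N-2)/(N-1)}$ on that shell yields $l_j\lesssim(2^j\rho)^{1/(N-1)}$, and summing the resulting geometric series up to $2^{j_{\max}}\rho\sim 1$ gives a purely dimensional bound. The remaining ``middle'' interval, where $g\in(1/8,7/8)$, is handled directly since there the slice perimeter is bounded below by a dimensional constant. This scale-by-scale accounting is the missing idea in your proposal.
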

\begin{proof}
For every $t\in [-W,W]$, we call for brevity
\[
v(t) = \H^{N-1}\Big( E \cap \big(Q \times \{t\}\big)\Big)\,.
\]
Notice that for $\H^1$-a.e. $t\in [-W,W]$, by Vol'pert Theorem (see for instance~\cite{MR216338,MR1857292}) we have that $v(t)$ is well-defined, and moreover
\[
\partial^* \Big(E \cap \big( Q\times \{t\}\big)\Big) = \partial^* E \cap \big( Q\times \{t\}\big)
\]
up to $\H^{N-2}$-negligible subsets. As a consequence, by the relative isoperimetric inequality in the cube we have
\begin{equation}\label{byvolp}
\H^{N-2} \Big(\partial^* E \cap \big( Q\times \{t\}\big)\Big) \geq C_N \Big( v(t)\wedge \big(1-v(t)\big)\Big)^{\frac{N-2}{N-1}}\,,
\end{equation}
where $C_N$ is a constant only depending on $N$. Let us now define
\begin{align}\label{tobegen}
s_1^- = \sup \Big\{ t \in [-W,W]:\, v(t) \geq 1-\rho\Big\}\,, && s_1^+ = \inf \Big\{ t \in [-W,W]:\, v(t) \leq \rho\Big\}\,,
\end{align}
which are well defined thanks to~(\ref{cond2}) and~(\ref{cond3}). Notice that, by projection, we have
\begin{equation}\label{projarg}
\H^{N-1}\Big( \partial^* E \cap\big( Q \times [s_1^-,W) \big)\Big) \geq \big| v(W) - \limsup_{s\nearrow s_1^-} v(s)\big| \geq  1-2\rho\,,
\end{equation}
and similarly
\[
\H^{N-1}\Big( \partial^* E \cap\big( Q \times (-W,s_1^+] \big)\Big) \geq 1-2\rho\,.
\]
Since $\rho<1/5$, by~(\ref{cond1}) we deduce that $s_1^-< s_1^+$. In the same way, we can define
\begin{align*}
s_j^- = \sup \Big\{ t \in [-W,W]:\, v(t) \geq 1-2^j \rho\Big\}\,, && s_j^+ = \inf \Big\{ t \in [-W,W]:\, v(t) \leq 2^j \rho\Big\}\,,
\end{align*}
and we still have that $s_j^-< s_j^+$ as soon as $2^j \rho< 1/4$. Calling then $\jm$ the largest integer such that $2^{\jm} \rho<1/4$, by construction we have $s_1^-\leq s_2^-\leq \cdots \leq s_\jm^-\leq s_\jm^+\leq \cdots \leq s_2^+\leq s_1^+$. Notice now that, for almost every $t\in I_j:=(s_j^-, s_{j+1}^-)\cup (s_{j+1}^+,s_j^+)$, by~(\ref{byvolp}) we have
\[
\H^{N-2} \Big(\partial^* E \cap \big( Q\times \{t\}\big)\Big) \geq C_N \big( 2^j \rho\big)^{\frac{N-2}{N-1}}\,.
\]
As a consequence, calling $l_j=\H^1(I_j)$, by the Coarea Formula we deduce
\[
\H^{N-1}\Big(\partial^* E \cap \big(Q \times I_j  \big)\Big)
\geq \int_{I_j} \H^{N-2} \Big(\partial^* E \cap \big( Q\times \{t\}\big)\Big)\, dt\geq C_N l_j \big( 2^j \rho\big)^{\frac{N-2}{N-1}}\,.
\]
Moreover, arguing by projection as in~(\ref{projarg}), we get
\[
\H^{N-1}\Big( \partial^* E \cap\big( Q \times [s_{j+1}^-,s_{j+1}^+] \big)\Big) \geq 1-2^{j+2}\rho\,.
\]
Putting the last two estimates together with~(\ref{cond1}), we get
\[
1+\rho\geq 1-2^{j+2}\rho + C_N l_j \big( 2^j \rho\big)^{\frac{N-2}{N-1}}\,,
\]
which can be rewritten as
\[
l_j\leq \frac{2^{-j\,\frac{N-2}{N-1}} (2^{j+2}+1)}{C_N}\,\rho^{\frac 1{N-1}}
\leq \frac 8{C_N}\,2^{\frac j{N-1}} \,\rho^{\frac 1{N-1}}\,.
\]
As a consequence, we have
\begin{equation}\label{bigones}\begin{split}
s_1^+-s_1^- &= s_\jm^+ - s_\jm^- + \sum_{j=1}^\jm l_j
\leq s_\jm^+ - s_\jm^- + \frac{16}{C_N (2^{\frac 1{N-1}}-1)}\,(2^\jm\rho)^{\frac 1{N-1}}\\
&\leq s_\jm^+ - s_\jm^- + \frac{16}{C_N 4^{\frac 1{N-1}} (2^{\frac 1{N-1}}-1)}\,.
\end{split}\end{equation}
Moreover, by definition, for almost every $s_\jm^-<t<s_\jm^+$ we have
\[
\frac 18 \leq 2^\jm\rho < v(t)< 1-2^\jm\rho \leq \frac 78\,,
\]
which again by~(\ref{byvolp}) implies
\[
\H^{N-2} \Big(\partial^* E \cap \big( Q\times \{t\}\big)\Big) \geq \frac{C_N}{8^{\frac{N-2}{N-1}}} \,,
\]
and then
\[
2 \geq \H^{N-1}\Big( \partial^* E \cap\big( Q \times [s_\jm^-,s_\jm^+] \big)\Big)
\geq \frac{C_N}{8^{\frac{N-2}{N-1}}}\, \big(s_\jm^+-s_\jm^-\big)\,,
\]
which can be rewritten as
\[
s_\jm^+-s_\jm^-\leq \frac{2\cdot 8^{\frac{N-2}{N-1}}}{C_N}\,.
\]
Putting together this estimate and~(\ref{bigones}), we get a purely dimensional constant $H=H(N)$ such that $s_1^+-s_1^-<H$. By definition~(\ref{tobegen}), we can find $a$ and $b$ sufficiently close to $s_1^-$ and $s_1^+$ so that $b-a<H$ and that $v(a)\geq 1-\rho$ and $v(b)\leq \rho$. This gives the validity of~(\ref{cond2}) and~(\ref{cond3}) with $a$ and $b$ in place of $-W$ and $W$. Since the validity of~(\ref{cond1}) with $(a,b)$ in place of $(-W,W)$ is obvious, the thesis is concluded.
\end{proof}

The importance of the above technical result in our paper relies in the following consequence.

\begin{prop}\label{flattening}
For every $\theta>0$ there exists $\rho\leq \theta$, only depending on $\theta$ and on $N$, such that the following holds. Let $E\subseteq\R^N$ be a set, and assume that for some $R>r>0$, calling $Q_r=(-r/2,r/2)^{N-1}$ and $Q_r^R=Q_r\times (-R,R)$ one has
\begin{gather}
\H^{N-1}\Big(\partial^* E\cap Q_r^R\Big) \leq (1+\rho) r^{N-1}\,,\label{hyp1} \\
\H^N\Big(\big(E \cap Q_r^R\big)\Delta \big(Q_r\times(-R,0)\big)\Big) \leq \rho^2\, r^{N-1} R\label{hyp2}\,.
\end{gather}
Then, there exist $\bar t\in [-\rho R-\theta r,\rho R+\theta r]$ and $r/2\leq r^-\leq r$ such that, calling
\begin{equation}\label{defF}
F= \big(E \setminus Q_{r^-}^R\big) \cup \big(Q_{r^-} \times (-R,\bar t)\big)\,,
\end{equation}
one has
\begin{align}\label{fincl}
\H^N\big(E \Delta F\big)\leq \theta r^N\,, && \H^{N-1}\big( \partial^* F \cap \partial Q_{r^-}^R\big)\leq \theta\, r^{N-1}\,, && |F|=|E|\,.
\end{align}
\end{prop}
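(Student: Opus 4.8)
The plan is to first rescale by $x\mapsto x/r$, which turns the two hypotheses into~(\ref{hyp1})--(\ref{hyp2}) with $r=1$ and $R$ replaced by $W:=R/r>1$, and turns the three conclusions in~(\ref{fincl}) into the same statements with $\theta r^N,\theta r^{N-1}$ replaced by $\theta$ and with $\bar t\in[-\rho W-\theta,\rho W+\theta]$; so I assume $r=1$ and write $v(t)=\H^{N-1}\big(E\cap(Q_1\times\{t\})\big)$. Two facts will be used repeatedly: projecting $\partial^*E$ onto the last axis gives $|Dv|\big((s,s')\big)\le\H^{N-1}\big(\partial^*E\cap(Q_1\times(s,s'))\big)$ for all $s<s'$, and the relative isoperimetric inequality in the cube $Q_1$ (exactly as in the proof of Lemma~\ref{qutr}) gives, for a.e.\ $t$, $\H^{N-2}\big(\partial^*E\cap(Q_1\times\{t\})\big)\ge C_N\big(v(t)\wedge(1-v(t))\big)^{\frac{N-2}{N-1}}$.

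The heart of the proof is the claim that \emph{$E\cap Q_1^W$ is, up to a set of measure $\omega(\rho)$ with $\omega(\rho)\to 0$ as $\rho\to 0$ and $\omega$ independent of $W$, equal to a flat half-prism $Q_1\times(-W,\tau)$ with $|\tau|\le\rho W+\theta$.} To prove it I would use~(\ref{hyp2}) to pick levels $a_0\in(-W,-W+2\rho W)$ with $v(a_0)\ge 1-\rho$ and $b_0\in(W-2\rho W,W)$ with $v(b_0)\le\rho$ (both exist since $\{t\in(-W,0):1-v(t)>\rho\}$ and $\{t\in(0,W):v(t)>\rho\}$ have measure $\le\rho W$). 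If $b_0-a_0\le H(N)$ I set $(a,b)=(a_0,b_0)$; otherwise I apply Lemma~\ref{qutr} on the recentred cylinder $Q_1\times(a_0,b_0)$ to get an interval $(a,b)$ of length $\le H(N)$ with $v(a)\ge 1-\rho$, $v(b)\le\rho$ and $\H^{N-1}(\partial^*E\cap(Q_1\times(a,b)))\le 1+\rho$; choosing $a,b$ among the (a.e.) levels where $\partial^*E$ carries no $\H^{N-1}$‑mass, projection gives $\H^{N-1}(\partial^*E\cap(Q_1\times(a,b)))\ge v(a)-v(b)\ge 1-2\rho$, so $\H^{N-1}(\partial^*E\cap(Q_1\times(-W,a)))\le 3\rho$. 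Hence $|Dv|((-W,a))\le 3\rho$, so $v\ge v(a)-3\rho\ge 1-4\rho\ge 1/2$ on $(-W,a)$, and combining the slice isoperimetric inequality with coarea yields $\int_{-W}^a(1-v)^{\frac{N-2}{N-1}}\le 3\rho/C_N$; since $1-v\le 4\rho$ there and $1-v=(1-v)^{\frac{N-2}{N-1}}(1-v)^{\frac1{N-1}}\le(4\rho)^{\frac1{N-1}}(1-v)^{\frac{N-2}{N-1}}$, we get the crucial $W$‑free bound
\[
\int_{-W}^a(1-v)\le C_N\,\rho^{\frac N{N-1}}\,,\qquad\text{and symmetrically}\qquad \int_b^W v\le C_N\,\rho^{\frac N{N-1}}\,.
\]
On the bounded slab $Q_1\times(a,b)$ a compactness argument (using the just‑proved $L^1$ near‑fullness on $(a-1,a)$ and near‑emptiness on $(b,b+1)$, which pass to the limit) reduces to the rigidity case of the relative isoperimetric inequality in a cylinder — a finite‑perimeter set containing $Q_1\times(a-1,a)$, disjoint from $Q_1\times(b,b+1)$ and with relative perimeter $\le|Q_1|$ in $Q_1\times(a-1,b+1)$ is a flat half‑prism — giving, for $\rho$ small depending on $\theta,N$, a level $t_0$ with $\H^N\big((E\cap(Q_1\times(a,b)))\,\Delta\,(Q_1\times(a,t_0))\big)\le\theta'$ for a small multiple $\theta'$ of $\theta$. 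Adding the three pieces gives $\H^N\big((E\cap Q_1^W)\,\Delta\,(Q_1\times(-W,t_0))\big)\le\omega:=2C_N\rho^{\frac N{N-1}}+\theta'$, and comparing with~(\ref{hyp2}) forces $|t_0|\le\omega+\rho^2W\le\rho W+\theta$. Take $\tau=t_0$.

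Given the claim I would build $F$ as follows. For $r'\in[1/2,1]$ let $\bar t(r')$ be the unique level with $(r')^{N-1}\big(\bar t(r')+W\big)=|E\cap Q_{r'}^W|$, so that $F_{r'}:=(E\setminus Q_{r'}^W)\cup\big(Q_{r'}\times(-W,\bar t(r'))\big)$ automatically has $|F_{r'}|=|E|$. Restricting~(\ref{hyp2}) to $Q_{r'}\subseteq Q_1$ gives $\big|\,|E\cap Q_{r'}^W|-(r')^{N-1}W\,\big|\le\rho^2W$, hence $|\bar t(r')|\le 2^{N-1}\rho^2W\le\rho W+\theta$ for every $r'$; restricting the claim gives $\H^N\big((E\cap Q_{r'}^W)\,\Delta\,(Q_{r'}\times(-W,\tau))\big)\le\omega$, and comparing integrals $(r')^{N-1}|\bar t(r')-\tau|\le\omega$, so $\H^N(E\,\Delta\,F_{r'})\le 2\omega\le\theta$ for every $r'\in[1/2,1]$ once $\theta'$ and $\rho$ are chosen suitably. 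For the lateral‑boundary estimate, for a.e.\ $r'$ the trace of $E$ on $S_{r'}:=\partial Q_{r'}\times(-W,W)$ is defined and $\partial^*F_{r'}\cap S_{r'}$ coincides $\H^{N-1}$‑a.e.\ with $(E\cap S_{r'})\,\Delta\,(\partial Q_{r'}\times(-W,\bar t(r')))$; by the coarea formula for $y\mapsto\|y\|_\infty$, $\int_{1/2}^1\H^{N-1}\big((E\cap S_{r'})\,\Delta\,(\partial Q_{r'}\times(-W,\tau))\big)\,dr'\le C_N\,\H^N\big((E\,\Delta\,(Q_1\times(-W,\tau)))\cap Q_1^W\big)\le C_N\omega$, and since replacing $\tau$ by $\bar t(r')$ changes each slice by $\le C_N\omega$, a set of $r'$ of positive measure satisfies $\H^{N-1}(\partial^*F_{r'}\cap S_{r'})\le C_N\omega\le\theta$. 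Picking such an $r^-$ which is also trace‑regular, and reading $\partial Q_{r^-}^R$ as its lateral boundary (the two caps carry no reduced boundary of $F$ when $E$ is, as in the application, full below $-R$ and empty above $R$), all of~(\ref{fincl}) holds; undoing the scaling concludes.

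The main obstacle is precisely the $W=R/r$ independence of $\omega$ in the boxed claim: using~(\ref{hyp2}) alone yields only an $L^1$‑error of order $\rho^2W$, which is worthless when $R\gg r$. One must exploit the near‑minimality of the perimeter twice — once through Lemma~\ref{qutr} to confine the transition of $E$ from ``full'' to ``empty'' to a slab of \emph{bounded} height, and once, away from that slab, to upgrade the crude $1-v\le 4\rho$ (resp.\ $v\le 4\rho$) to the \emph{integrable} bound $\int(1-v)\le C_N\rho^{N/(N-1)}$ by the sublinearity of the slice isoperimetric inequality. The compactness/rigidity step inside the bounded slab is routine but needs care to keep the ``full at the bottom, empty at the top'' behaviour in the limit — which is why one phrases it through $L^1$ near‑fullness/near‑emptiness on the adjacent unit slabs rather than through the traces of $v$ at $a$ and $b$.
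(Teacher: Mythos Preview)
Your proposal is correct and follows essentially the same strategy as the paper: rescale to $r=1$, use Lemma~\ref{qutr} to confine the transition of $E$ to a slab of bounded height, control the $L^1$-defect outside that slab $W$-independently via the slice isoperimetric inequality plus coarea, run a compactness/rigidity argument inside the bounded slab to get closeness to a flat half-prism, then pick $r^-$ by a Fubini average and fix $\bar t$ to match volumes. The differences are cosmetic: the paper chooses the initial levels near $0$ (so that $(a,b)\subset(-\rho W,\rho W)$ is automatic) rather than near $\pm W$, it packages the compactness as a contradiction along $\rho_n=1/n$ with an explicitly glued full cube $Q\times(-1,0)$ to anchor the limit, and where you spell out the bound $\int_{-W}^a(1-v)\le C\rho^{N/(N-1)}$ the paper simply invokes ``the relative isoperimetric inequality'' in the long sub-cylinder to reach the same $W$-free conclusion.
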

\begin{proof}
By rescaling, we can reduce ourselves to consider the case $r=1$, which makes the notation simpler. For each $n\in\N$, let us then define $\rho_n=1/n$, and let $\{E_n\}$ be any set such that~(\ref{hyp1}) and~(\ref{hyp2}) hold true with constant $\rho_n$ in place of $\rho$, $Q=Q_1$ in place of $Q_r$, and some $R_n$ in place of $R$. To show the thesis, it is enough to prove the validity of~(\ref{fincl}) for $n$ large enough, up to a subsequence.

Let $n$ be fixed. Since by~(\ref{hyp2}) and by Fubini Theorem
\[
\rho_n^2 R_n \geq \H^N\Big(\big(E_n \cap Q^{R_n}\big)\Delta \big(Q\times(-R_n,0)\big)\Big)
\geq \int_0^{\rho_n R_n} \H^{N-1} \Big(E_n\cap  \big(Q\times \{t\}\big)\Big) \, dt\,,
\]
there exists some $0<R_n^+<\rho_n R_n $ such that
\[
\H^{N-1}\Big( E_n \cap \big(Q\times \{R_n^+\}\big)\Big) \leq \rho_n\,,
\]
and analogously there exists some $-\rho_n R_n < R_n^-<0$ such that
\[
\H^{N-1}\Big( E_n \cap \big(Q\times \{R_n^-\}\big)\Big) \geq 1-\rho_n\,.
\]
Therefore, we can apply Lemma~\ref{qutr} with the set $E_n$ and the interval $(R_n^-,R_n^+)$, and we get an interval $(a_n,b_n)$ of length at most $H$ such that conditions~(\ref{cond1})--(\ref{cond3}) are satisfied for the set $E_n$ and the interval $(a_n,b_n)$, with $\rho=\rho_n$. We now define the set $\widetilde E_n$ as
\[
\widetilde E_n = \Big\{ (x',x_N-a_n),\, (x',x_N)\in E_n,\, x'\in Q,\, x_N\in (a_n,b_n)\Big\}\cup \big(Q\times (-1,0)\big)\,.
\]
In words, $\widetilde E_n$ is obtained by taking the part of $E_n\cap \big(Q\times (-R_n,R_n)\big)$ relative to the interval $(a_n,b_n)$, translating it vertically so that the height $\{a_n\}$ corresponds to $0$, and then adding the cube $Q\times (-1,0)$. We will consider the set $\widetilde E_n$ inside the box $D=Q\times (-1,H+1)$. By construction, also keeping in mind that the last coordinate of any point of $\widetilde E_n$ is at most $b_n-a_n\leq H$, we have
\[\begin{split}
P\big( \widetilde E_n; D\big) = \H^{N-1}\Big(\partial^* E_n\cap \big(Q\times (a_n,b_n)\big)\Big) &+ \H^{N-1}\Big(E_n \cap \big(Q\times \{b_n\}\big)\Big)\\
& + 1- \H^{N-1}\Big(E_n \cap \big(Q\times \{a_n\}\big)\Big)\,,
\end{split}\]
where the first part is the perimeter of $\widetilde E_n$ inside $Q\times (0,b_n-a_n)$, and the second and third part are the perimeter at the level $b_n-a_n$ and $0$ respectively. Keeping in mind that for the set $E_n$ the estimate~(\ref{hyp1}) holds, as well as~(\ref{cond1})--(\ref{cond3}) with $a_n$ and $b_n$, the above inequality gives
\begin{equation}\label{smallper}
P\big( \widetilde E_n; D\big) \leq 1 + 3\rho_n\,.
\end{equation}
The sets $\widetilde E_n$ are then a sequence of sets of bounded perimeter in the bounded box $D$. There exists then a subsequence such that $\Chi{\widetilde E_n}$ converges in the weak BV sense to $\Chi{E}$ for some set $E\subseteq D$. Since the convergence is in particular strong in $L^1$, we deduce that
\begin{align}\label{alnst}
E\cap \big(Q\times (H,H+1)\big)=\emptyset\,, && Q\times (-1,0)\subseteq E\,.
\end{align}
By lower semicontinuity of the perimeter and by~(\ref{smallper}) we obtain that $P(E;D) \leq 1$, which together with~(\ref{alnst}) implies that $E$ is actually of the form
\[
E = Q \times (-1,t)
\]
for some $t\in [0,H]$. As a consequence, for $n$ large enough we have
\[
\frac \theta {5\cdot 2^N}>\H^N\Big(\widetilde E_n \Delta \big(Q\times (-1,t)\big)\Big)\geq \H^N\Big(\big(\widetilde E_n \Delta \big(Q\times (-1,t)\big)\big) \cap \big(Q\times (0,t\wedge (b_n-a_n)\big)\Big)\,.
\]
Keeping in mind the definition of $\widetilde E_n$, and setting $\hat t_n = a_n + t\wedge (b_n-a_n)\in (a_n,b_n)$, we deduce
\begin{equation}\label{internal}
\H^N\bigg(\Big( E_n \Delta \big(Q \times (a_n,\hat t_n) \big)\Big) \cap\big(Q \times (a_n,b_n) \big)\bigg) < \frac \theta {5\cdot 2^N}\,.
\end{equation}
Since by construction we have 
\begin{align*}
\H^{N-1}\Big( E_n \cap \big(Q\times \{a_n\}\big)\Big) \geq 1-\rho_n\,, && \H^{N-1}\Big( E_n \cap \big(Q\times \{b_n\}\big)\Big) \leq \rho_n\,,
\end{align*}
we deduce that
\[
\H^{N-1}\Big(\partial^* E_n \cap \big(Q \times (a_n,b_n)\big) \Big) \geq 1-2\rho_n\,,
\]
which by~(\ref{hyp1}) implies
\begin{equation}\label{star}
\H^{N-1}\bigg(\partial^* E_n \cap \Big(Q \times\big( (-R_n,a_n)\cup (b_n,R_n)\big)\Big) \bigg) \leq 3\rho_n\,.
\end{equation}
In particular, $P(E_n; Q\times (-R_n,a_n))\leq 3\rho_n$, and by the relative isoperimetric inequality this implies that the volume of $E_n\cap (Q\times (-R_n,a_n))$ is either smaller than $\rho_n$ or larger than $a_n+R_n-\rho_n$ (keep in mind that $\rho_n^{\frac N{N-1}}\gg \rho_n$ for $n$ large enough). By~(\ref{hyp2}) and the fact that $\rho_n\ll 1$, we exclude the first possibility; in other words, we have obtained
\[
\H^N\Big( \big(Q \times (-R_n,a_n)\big)\setminus E_n\Big) < \rho_n\,.
\]
In the very same way, $P(E_n; Q\times (b_n,R_n))\leq 3\rho_n$, and by the relative isoperimetric inequality we deduce that
\[
\H^N\Big( \big(Q \times (b_n,R_n)\big)\cap E_n\Big) < \rho_n\,.
\]
Putting together last two estimates and~(\ref{internal}) we obtain, for $n$ large enough,
\begin{equation}\label{psvac}
\H^N\bigg(\Big( E_n \Delta \big(Q \times (-R_n,\hat t_n) \big)\Big) \cap Q^{R_n}\bigg) < \frac \theta {2^{N+2}}\,.
\end{equation}
Now, for every $s<1$ we can call
\[
\Gamma_s = \Big(E_n \Delta \big(Q \times (-R_n,\hat t_n) \big)\Big) \cap \Big(\partial Q_s \times \big(-R_n,R_n\big)\Big)\,.
\]
By Fubini Theorem, the estimate~(\ref{psvac}) gives
\[
\frac \theta {2^{N+2}} > \int_{\frac 12}^1 \H^{N-1}\big(\Gamma_s\big)\,ds\,,
\]
and then there exists some $1/2<r_n^-<1$ such that
\begin{equation}\label{defrn-}
\H^{N-1}\big(\Gamma_{r_n^-}\big) < \frac \theta {2^{N+1}}\,.
\end{equation}
We define now the temptative set $\widehat F_n$ as
\[
\widehat F_n= \Big(E_n \setminus Q_{r_n^-}^{R_n}\Big) \cup \big(Q_{r_n^-} \times (-R_n,\hat t_n)\big)\,.
\]
According to~(\ref{defF}), the actual set $F_n$ will be defined in the very same way, just with some $\bar t$ in place of $\hat t$. First of all, we notice that
\begin{equation}\label{with2}
-\rho_n R_n\leq \hat t \leq\rho_n R_n\,,
\end{equation}
which is clear by construction since $\hat t_n\in [a_n,b_n]\subseteq [-R_n^-,R_n^+]\subseteq (-\rho_n R_n,\rho_n R_n)$. Moreover, we claim that
\begin{align}\label{fincltempt}
\H^N\big(E_n \Delta \widehat F_n\big)\leq \frac \theta {2^{N+2}} \,, && \H^{N-1}\big( \partial^* \widehat F_n \cap \partial Q_{r_n^-}^{R_n}\big)\leq \frac \theta{2^N}\,.
\end{align}
The left estimate is directly given by~(\ref{psvac}). Concerning the right one, notice that
\begin{equation}\label{subdiv}
\partial\big(Q_{r_n^-}^{R_n}\big)=
\big(\partial Q_{r_n^-} \times (-R_n,R_n)\big)\cup \big(Q_{r_n^-} \times \{-R_n\}\big)\cup \big(Q_{r_n^-} \times \{R_n\}\big)\,.
\end{equation}
By~(\ref{defrn-}), we have
\[
\H^{N-1}\Big(\partial^* \widehat F_n \cap \big(\partial Q_{r_n^-} \times (-R_n,R_n)\big)\Big) = \H^{N-1} \big(\Gamma_{r_n^-}\big)< \frac \theta {2^{N+1}}\,.
\]
Since $P(E_n; Q\times (-R_n,a_n))\leq 3\rho_n$ by~(\ref{star}), keeping in mind that by construction~(\ref{cond2}) is valid for $E_n$ with $\rho=\rho_n$ and $W=a_n$, by projection we deduce that
\[\begin{split}
\H^{N-1}\Big(\partial^* \widehat F_n &\cap \big(Q_{r_n^-} \times \{-R_n\}\big) \Big)
= \H^{N-1}\Big(\big(Q_{r_n^-}\times \{-R_n\}\big) \setminus E_n\Big)\\
&\leq \H^{N-1}\Big( \partial^* E_n \cap \big(Q_{r_n^-}\times (-R_n,a_n)\big)\Big)+\H^{N-1}\Big(\big(Q_{r_n^-}\times \{a_n\}\big)\setminus E_n\Big)\\
&\leq 3 \rho_n+\H^{N-1}\Big(\big(Q\times \{a_n\}\big)\setminus E_n\Big)\leq 4\rho_n\,,
\end{split}\]
and in the very same way
\[
\H^{N-1}\Big(\partial^* \widehat F_n \cap \big(Q_{r_n^-} \times \{R_n\}\big) \Big)= \H^{N-1}\Big(\big(Q_{r_n^-}\times \{R_n\}\big) \cap E_n\Big)\leq 4\rho_n\,.
\]
Putting together the last three estimates, by~(\ref{subdiv}) we obtain
\[
\H^{N-1}\Big( \partial^* \widehat F_n \cap \partial Q_{r_n^-}^{R_n}\Big)\leq \frac \theta {2^{N+1}} + 8\rho_n\,,
\]
and this implies that also the right estimate in~(\ref{fincltempt}) is true as soon as $n$ is large enough. Summarizing, the set $\widehat F_n$ already satisfies the first two estimates in~(\ref{fincl}), and we only have to modify it to get also the third. We can then finally define, according to~(\ref{defF}),
\begin{align*}
F_n= \Big(E_n \setminus Q_{r_n^-}^{R_n}\Big) \cup \big(Q_{r_n^-} \times (-R_n,\bar t_n)\big)\,, && \hbox{where} && \bar t_n = \hat t_n + \frac{\H^N(E_n)-\H^N(\widehat F_n) }{(r_n^-)^{N-1}}\,.
\end{align*}
First of all we underline that, thanks to the first estimate in~(\ref{fincltempt}), we have
\begin{equation}\label{hbcl}
\big| \bar t_n - \hat t_n\big| \leq \frac \theta{2^{N+2} (r_n^-)^{N-1}} \leq \frac \theta 8\,.
\end{equation}
As a consequence, thanks to~(\ref{with2}) we have that $\bar t \in [-\rho_n R_n-\theta,\rho_n R_n+\theta]$ as required, ad we have to check the validity of~(\ref{fincl}). The third estimate in~(\ref{fincl}) is satisfied by the definition of $\bar t$, so we have to check the first two. Concerning the first one, it holds since by construction and by~(\ref{fincltempt}) we have
\[\begin{split}
\H^N(E_n\Delta F_n) &\leq \H^N(E_n \Delta \widehat F_n) + \H^N(\widehat F_n\Delta F_n)
=\H^N(E_n \Delta \widehat F_n) + \Big|\H^N(\widehat F_n)-\H^N(E_n)\Big|\\
&\leq 2\H^N(E_n \Delta \widehat F_n)\leq \frac \theta{2^{N+1}}\leq \theta\,.
\end{split}\]
And finally, the second one is satisfied since by construction, by~(\ref{fincltempt}) and by~(\ref{hbcl}) we have
\[\begin{split}
\H^{N-1}\big(\partial^* F_n\cap \partial Q_{r_n^-}^{R_n}\big) &\leq 
\H^{N-1}\big( \partial^* \widehat F_n \cap \partial Q_{r_n^-}^{R_n}\big) + 2(N-1)(r_n^-)^{N-2} |\bar t_n- \hat t_n|\\
&\leq \frac \theta{2^N} + \frac{2(N-1)\theta}{2^{N+2} r_n^-}
\leq \frac\theta{2^N} + \frac{2(N-1)\theta}{2^{N+1}}\leq \theta\,.
\end{split}\]
The proof is then concluded.
\end{proof}

We are now almost ready to present the proof of Theorem~\mref{main2}. The last ingredient we need is an auxilary problem. More precisely, for any vector $\nu\in\S^{N-1}$, we consider clusters contained in $\R_{+,\nu}^N=\big\{ x\in\R^N,\, x\cdot \nu>0\big\}$, and we consider the relative perimeter $P_*(\cdot;\R^N_{+,\nu})$. This is defined as follows. For every set $F\subseteq\R^N_+$, we define
\[
P_*(F;\R^N_{+,\nu}) = \int_{\partial^* F\cap \R^N_{+,\nu}} \|\nu_F(x)\|^* \, d\H^{N-1}(x)\,,
\]
and as usual for every cluster $\F\subseteq\R^N_{+,\nu}$ we set
\[
P_*(\F;\R^N_{+,\nu}) = \frac 12\, \sum\nolimits_{j=0}^m P_*(F_j;\R^N_{+,\nu})\,.
\]
Notice that the part of the  boundary of a set $F$ or of a cluster $\F$ which lies on the hyperplane $\{x\cdot\nu=0\}$ has no cost. As a consequence, it is standard to observe that there are clusters $\F\subseteq\R^N_{+,\nu}$ such that $P(\F;\R^N_{+,\nu}) < J(|\F|)$. By compactness, there exists a constant $\xi>0$, only depening on the norm $\|\cdot\|^*$, such that for every $v\in(\R^+)^N$ with $|v|_1=1$ and for every $\nu\in\S^{N-1}$, there exists a cluster $\F^\nu_v\subseteq \R^N_{+,\nu}$ such that
\begin{align}\label{xiauxpro}
|\F^\nu_v|=v\,, && P_*(\F^\nu_v;\R^N_{+,\nu}) \leq (1-\xi) J(v)\,.
\end{align}
Notice that, as an obvious consequence, any such cluster must satisfy
\[
\H^{N-1}\Big( \partial^*\F^\nu_v \cap \big\{x\cdot \nu=0\big\}\Big) > \frac{\xi J(v)}{C_2^+}\,,
\]
since $P_*(\F^\nu_v)\geq J(v)$. Again by compactness, we can call
\begin{equation}\label{defd+}
d^+ = \max\Big\{ \diam(\F^\nu_v),\, |v|_1=1,\, \nu\in\S^{N=1}\Big\}\,.
\end{equation}
We are finally ready to present the proof of Theorem~\mref{main2}. The basic idea is that if a very small minimal cluster is not connected, then by Lemma~\ref{easycase} the biggest connected component must contain almost all the minimal cluster. Then, by making use of Proposition~\ref{flattening} we can make the boundary flat around some external point of the biggest component, and then substitute all the other connected components with a minimal cluster ``attached'' to the flat part as in the auxiliary problem just discussed.

\begin{proof}[Proof of Theorem~\mref{main2}]
We have to consider the case when the manifold $M$ is Riemannian (that is, the fixed norm is the Euclidean one). However, we will use this assumption almost only in the last Step~VI, so we write most of the proof for a generic fixed norm.\par

\step{I}{Definition of the clusters $\F_j$ and $\F$.}
Let us assume that the  claim is false. Then, we can take a sequence of minimal $m$-clusters $\{\E_j\}_{j\in\N}$ which are not connected and whose total volumes $\omega_j=\tv{\E_j}_M$ converge to $0$. We denote the volume of each of the clusters as $|\E_j|= \omega_j v_j$, so that each $v_j\in (\R^+)^m$ satisfies $|v_j|_1=1$. Moreover, we call $\E^+_j$ the largest connected component of $\E_j$, and we denote its volume as $|\E_j^+|=\omega_j v_j^-$; notice that $v_j^-\leq v_j$ (that is, for every $1\leq i \leq m$ we have $(v_j^-)_i\leq (v_j)_i$). There are now two possibilities: either $v_j-v_j^-$ converges to $0$, or it does not. In the second case, there exists some $\delta>0$ such that $\tv{\E_j^+}<(1-\delta)\tv{\E_j}$ for infinitely many indices $j\in\N$, but then Lemma~\ref{easycase} implies that $\tv{\E_j}\geq \eps_\delta$ for all these indices, and this is impossible since $\tv{\E_j}=\omega_j\to 0$. As a consequence, we can assume without loss of generality that $v_j-v_j^-\to 0$; in other words, the percentage of the total volume of the cluster $\E_j$ which corresponds to points not belonging to $\E_j^+$ goes to $0$ when $j\to\infty$.

By compactness, up to a subsequence we can assume the existence of a point $\bar p\in M$ and of a sequence $p_j\to \bar p$ such that each $p_j$ is contained in the cluster $\E_j^+$. Now, let $\sigma<4^{-N}$ be a small constant, whose exact value will be specified at the end, precisely in~(\ref{defsigma}). Let us fix an atlas satisfying~(\ref{choiceatlas}) and with oscillation smaller than $\sigma$; without loss of generality, we can assume the existence of a chart $\varphi_i : U_i\to V_ i$ such that $\bar p \in V_i$ and, calling $\hat x=\varphi_i^{-1}(\bar p)$,
\begin{equation}\label{choicepoint}
\| \cdot \|^*_{i, \hat x} = \|\cdot\|^* \qquad \hbox{or, equivalently,} \qquad h_i(\hat x, \nu) = \|\nu\|^* \quad \forall \, \nu \in \R^N\,.
\end{equation}
This choice is of course possible since $M$ is a fixed norm manifold, with norm $\|\cdot\|^*$.

Applying Theorem~\ref{smalldiam}, we get a constant $\bar \eps_1$; then, for every $j$ large enough we have that $\omega_j < \bar\eps_1$, and then each connected component of the cluster $\E_j$ is contained in a single chart. In particular, up to pass to a subsequence we have that all the clusters $\E_j^+$ are compactly contained in a single chart $V_\ell$, and the diameter of $\varphi_\ell^{-1}(\E_j^+)$ is at most $4C_4\omega_j^{1/N}$, so it goes to $0$. As a consequence, the clusters $\E_j^+$ are ``converging'' to the point $\bar p$; more precisely, every open neighborhood of $\bar p$ contains all the clusters $\E_j^+$ for $j$ large enough. Therefore, all the clusters $\E_j^+$ with $j$ large are compactly contained in $V_i$; this does not necessarily imply that $\ell=i$, there might be two different charts containing the point $\bar p$, but this means that we are allowed to use the chart $\varphi_i$. Now, we can select a sequence $\{x_j\}\subseteq\R^N$ so that each cluster
\begin{equation}\label{defone}
\F_j= x_j + \frac 1{\omega_j^{1/N}}\, \varphi_i^{-1}\big(\E_j^+\big)\,,
\end{equation}
contains the origin. Notice that the distance between $\hat x$ and $\varphi_i^{-1}(\E_j^+)$ goes to $0$.\par

By construction, the volume of the cluster $\F_j$ is $v_j^-$; moreover, each cluster $\F_j$ is contained in the ball centered at the origin and with radius $4 C_4$. As a consequence, up to extract a further subsequence, we can assume that the clusters $\F_j$ have a weak limit $\F$; this means that, writing $\F_j = \big(F_{j,1},\, F_{j,2},\, \dots\,,\, F_{j,m}\big)$, for every $1\leq q\leq m$ the characteristic function $\Chi{F_{j,q}}$ is converging to $\Chi{F_q}$ in the weak BV sense for $j\to \infty$. Notice that, since the clusters $\F_j$ are all contained in a single ball, the convergence is in particular strong in $L^1$; therefore, $|\F|=v$, where $v\in (\R^+)^m$ is the common limit of both $\{v_j\}$ and $\{v_j^-\}$, in particular $|v|_1=1$.\par

\step{II}{The validity of~(\ref{Fopt}).}
We claim now that
\begin{equation}\label{Fopt}
P_*(\F_j)\to P_*(\F)\,.
\end{equation}
Indeed, since by lower semicontinuity of the perimeter we know that $P_*(\F)\leq \liminf P_*(\F_j)$, if~(\ref{Fopt}) were false then there would be some positive constant $c$ such that, up to pass to a subsequence, $P_*(\F_j)>P_*(\F)(1+3c)$ for every $j\gg 1$. By continuity of the perimeter, for every $j\gg 1$ we could find some cluster $\widetilde \F_j$, very close to $\F$ but satisfying $|\widetilde \F_j|= v_j$, so that
\begin{equation}\label{absurd}
P_*(\F_j)>P_*(\widetilde \F_j)(1+2c)\,.
\end{equation}
Define now
\begin{equation}\label{deftwo}
\widetilde \E_j = \varphi_i \Big(\hat x + \omega_j^{1/N} \widetilde \F_j\Big)\,.
\end{equation}
Since $\hat x\in U_i$ and $\omega_j\to 0$, for every $j$ large enough the set $\hat x + \omega_j^{1/N} \widetilde \F_j$ is compactly contained in $U_i$, so the above definition makes sense. Keeping in mind~(\ref{choicepoint}), the definitions~(\ref{defone}) and~(\ref{deftwo}) ensure that when $j\to +\infty$ we have
\begin{align*}
\frac{P_M(\E_j^+)}{\omega_j^\frac{N-1}N\,P_*(\F_j)} \to 1\,, &&
\frac{P_M(\widetilde\E_j)}{\omega_j^\frac{N-1}N\,P_*(\widetilde \F_j)} \to 1 \,.
\end{align*}
However, these estimates together with~(\ref{absurd}) imply that for $j$ large enough
\[
P_M(\E_j) \geq P_M(\E_j^+) \geq (1+c) \,P_M(\widetilde\E_j)\,.
\]
Since $\widetilde\E_j$ is a cluster in $M$ with volume $|\widetilde\E_j|=\omega_j |\widetilde\F_j| = \omega_j v_j = |\E_j|$, the above estimate is a contradiction with the fact that $\E_j$ is a minimal cluster. Therefore, (\ref{Fopt}) is proved.

\step{III}{The point $\bar x$ and the inequalities~(\ref{new1})--(\ref{new3}).}
Let now $\bar x$ be a point in $\partial^*(\cup_{q=1}^m F_q)$; then, $\bar x$ belongs to the boundary of exactly one of the sets $F_q$ with $1\leq q\leq m$, say $q=\bar q$. Let $\nu\in\S^{N-1}$ be the normal vector to $F_{\bar q}$ at $\bar x$, and for every $r>0$ let us call $Q_r$ the $(N-1)$-dimensional cube centered at $\bar x$, with side $r$, orthogonal to $\nu$, and with sides parallel to an arbitrary basis of the hyperplane $\{\omega\in\R^N,\, \omega\cdot \nu =0\}$. In addition, for every $R>0$ we write
\begin{align*}
Q_r^{R,\pm} = \big\{ z \pm t \nu,\, z\in Q_r,\, 0\leq t<R\big\}\,, && Q_r^R = \big\{ z + t \nu,\, z\in Q_r,\, -R<t<R\big\} \,.
\end{align*}
Finally, let $\rho>0$ be a small number, to be specified in Step~V. By the blow-up properties of the boundary, if $R$ is small enough then we have
\begin{gather}
\H^N\big(F_{\bar q} \cap Q_R^{R.-}\big) \geq \bigg(\frac 12-\frac{\rho^2}9\bigg)\, R^N\,,\label{original1}\\
\H^N\Big(\bigcup\nolimits_{q=1}^m F_q \cap Q_R^{R,+}\Big) \leq \frac{\rho^2} 9\, R^N\,,\label{original2}\\
\H^{N-1}\Big(\partial^*\F \cap Q_R^R\Big)\leq \bigg(1+\frac\rho 5\bigg) R^{N-1}\,.\label{original3}
\end{gather}
We claim now that, choosing a suitable small $R$ such that~(\ref{original1})--(\ref{original3}) are in force, for $j$ large enough one has
\begin{gather}
\H^N\big(F_{j,\bar q} \cap Q_R^{R.-}\big) \geq \bigg(\frac 12-\frac {\rho^2} 8\bigg)\, R^N\,,\label{new1}\\
\H^N\Big(\bigcup\nolimits_{q=1}^m F_{j,q} \cap Q_R^{R,+}\Big) \leq \frac {\rho^2}8\, R^N\,,\label{new2}\\
\H^{N-1}\Big(\partial^*\F_j \cap Q_R^R\Big)\leq \bigg(1+\frac \rho 4\bigg) R^{N-1}\,.\label{new3}
\end{gather}
In fact, since the sets $F_{j,q}$ converge in the $L^1$ sense to $F_j$, the validity of~(\ref{new1}) and~(\ref{new2}) for $j$ large is an obvious consequence of~(\ref{original1}) and~(\ref{original2}). Instead, the lower semicontinuity of the perimeter implies that
\[
\H^{N-1}\Big(\partial^*\F \cap Q_R^R\Big) \leq \liminf \H^{N-1}\Big(\partial^*\F_j \cap Q_R^R\Big)\,,
\]
which does not allow to infer~(\ref{new3}) from~(\ref{original3}). However, we can choose a small $R$ such that not only~(\ref{original1})--(\ref{original3}) are true, but also
\begin{align}\label{nobound}
\H^{N-1}\Big(\partial^*\F \cap \partial\big(Q_R^R\big)\Big)=0\,, && \H^{N-1}\Big(\partial^*\F_j \cap \partial\big(Q_R^R\big)\Big)=0\quad \forall\, j\in\N\,.
\end{align}
This is not a problem since the above inequalities can fail for at most countably many $R>0$. Keeping in mind that all the clusters $\F_j$ are compactly contained in the ball $B_{5C_4}$ centered in the origin and with radius $5C_4$, the above mentioned lower semicontinuity of the perimeter implies that
\[
\H^{N-1}\Big(\partial^*\F \cap \Big( B_{5C_4}\setminus \overline{Q_R^R}\Big)\Big) \leq \liminf \H^{N-1}\Big(\partial^*\F_j \cap \Big( B_{5C_4}\setminus \overline{Q_R^R}\Big)\Big)\,.
\]
Keeping in mind~(\ref{Fopt}) from Step~II and~(\ref{nobound}), and using the fact that $M$ is a Riemannian manifold (so the fixed norm $\|\cdot\|^*$ coincides with the Euclidean norm $|\cdot|$) we deduce that
\[
\H^{N-1}\Big(\partial^*\F \cap Q_R^R\Big) \geq \limsup \H^{N-1}\Big(\partial^*\F_j \cap Q_R^R\Big)\,,
\]
and this allows to obtain~(\ref{new3}) from~(\ref{original3}). Summarizing, the estimates~(\ref{new1})--(\ref{new3}) are true with a fixed, small $R$, and for all $j$ large enough.

\step{IV}{Definition of $r$ and proof of~(\ref{othersmall}).}
Since $v_j-v_j^-\to 0$, as observed in Step~I, if $j$ is large enough then not only~(\ref{new1})--(\ref{new3}) are true, but also
\[
d^+ \big|v_j - v_j^-\big|^{1/N}< \frac R6\,,
\]
where $d^+$ is the constant defined in~(\ref{defd+}). We call then $2\leq n\in\N$ the number such that
\begin{equation}\label{setRr}
\frac R{3(n+1)} \leq d^+ \big |v_j - v_j^-\big|_1^{1/N} <  \frac R {3n}\,,
\end{equation}
where $|v|_1=\sum_{q=1}^m |v_q|$, and we set $r=R/n$; notice that $r$ depends on $j$, and actually $r\to 0$ when $j\to\infty$, but we do not write explicitely the dependance of $r$ on $j$ to keep the notation lighter. The $(N-1)$-dimensional cube $Q_R$ can be subdivided in $n^{N-1}$ essentially disjoint cubes $Q_{r,i}$ with side $r$, with $1\leq i \leq n^{N-1}$. Since $Q_R^R$ is essentially the union $\bigcup_{i=1}^{n^{N-1}} Q_{r,i}^R$, there is some $1\leq \bar i \leq n^{N-1}$ such that, writing for brevity $Q_r$ in place of $Q_{r,\bar i}$, one has
\begin{gather}
\H^N\big(F_{j,\bar q} \cap Q_r^{R.-}\big) \geq \bigg(\frac 12-\frac{\rho^2}2\bigg)\, r^{N-1}R\,,\label{final1}\\
\H^N\Big(\bigcup\nolimits_{q=1}^m F_{j,q} \cap Q_r^{R,+}\Big) \leq \frac{\rho^2} 2\, r^{N-1}R\,,\label{final2}\\
\H^{N-1}\Big(\partial^*\F_j \cap Q_r^R\Big)\leq (1+\rho) r^{N-1}\,.\label{final3}
\end{gather}
In fact, by~(\ref{new1}) we have that~(\ref{final1}) can fail for at most $25\%$ of the cubes $Q_{r,i}$, and the same is true for~(\ref{final2}) thanks to~(\ref{new2}) and for~(\ref{final3}) by~(\ref{new3}). Notice that~(\ref{final1}) and~(\ref{final2}) ensure that, writing for brevity
\[
\Fot= \bigcup\nolimits_{q\neq \bar q} F_{j,q}\,,
\]
one has
\[
\H^N\Big(\Fot  \cap Q_r^R\Big) \leq \rho^2 r^{N-1} R\,.
\]
But in fact, we claim that a much stronger inequality is true, namely,
\begin{equation}\label{othersmall}
\H^N\Big(\Fot  \cap Q_r^R\Big) \leq C_N' \rho r^N
\end{equation}
for some purely geometric constant $C_N'$, only depending on $N$. To show this estimate, for every $z\in Q_r$ we call $S_z=\{z+t\nu,\, -R<t<R\}$, and we call
\begin{gather*}
T_1 = \big\{ z \in Q_r,\, \partial^* F_{j,\bar q}\cap S_z=\emptyset \big\}\,,\\
T_2 = \big\{ z \in Q_r\setminus T_1,\, \#(\partial^* \F_j\cap S_z)\geq 2 \big\}\,,\\
T_3 = \big\{ z \in Q_r\setminus (T_1\cup T_2),\, \cup_{q\neq \bar q}\,\partial^* F_{j,q}\cap S_z\neq \emptyset \big\}\,.
\end{gather*}
If $z\in T_1$, then $S_z$ either does not intersect $F_{j,\bar q}$, or is entirely contained in it. Therefore, by~(\ref{final1}) and~(\ref{final2})
\[
\rho^2 r^{N-1} R \geq \H^N\big(F_{j,\bar q}\Delta Q_r^R\big)\geq R \H^{N-1}(T_1)\,,
\]
and this implies
\[
\H^{N-1}(T_1)\leq\rho^2 r^{N-1}\,.
\]
Moreover, by~(\ref{final3})
\[
(1+\rho)r^{N-1}\geq \H^{N-1}\Big(\partial^*\F_j \cap Q_r^R\Big) \geq \H^{N-1}(Q_r\setminus T_1)+\H^{N-1}(T_2)
\geq (1-\rho^2) r^{N-1} +\H^{N-1}(T_2)\,,
\]
and this implies
\[
\H^{N-1}(T_2)\leq (\rho+\rho^2) r^{N-1}\,.
\]
Finally, if $z\in T_3$ then $S_z$ contains only a single point of $\partial\F_j$, and this point is common boundary between $F_{j,\bar q}$ and exactly one $F_{j,q}$ with $q\neq \bar q$. As a consequence, the whole $S_z$ is contained in $\cup_{q=1}^m F_{j,q}$, and then by~(\ref{final2}) we have
\[
\frac{\rho^2}2 \, r^{N-1} R \geq \H^N\Big(\bigcup\nolimits_{q=1}^m F_{j,q} \cap Q_r^{R,+}\Big)\geq R \H^{N-1}(T_3)\,,
\]
which implies
\[
\H^{N-1}(T_3) \leq \frac{\rho^2}2\, r^{N-1}\,.
\]
The bounds on the measure of $T_1,\, T_2$ and $T_3$, and the fact that $\rho$ is small, ensure that
\begin{equation}\label{Tsmall}
\H^{N-1}(T) =\H^{N-1}(T_1\cup T_2\cup T_3) \leq \bigg(\rho +\frac 52\, \rho^2\bigg) r^{N-1} \leq 2\rho r^{N-1}\,,
\end{equation}
where we have called $T=T_1\cup T_2\cup T_3$. Notice now that if $z\notin T$ then $S_z$ contains exactly a point of $\partial^*\F_j$, and this point belongs to $\partial^* F_{j,\bar q}$ and to no other $\partial^* F_{j,q}$ with $q\neq \bar q$. Therefore,
\begin{equation}\label{torepeat}
\H^{N-1}\Big( \partial^*\F_j \cap \big((Q_r\setminus T)\times (-R,R)\big)\Big)\geq \H^{N-1}(Q_r\setminus T) \geq \big(1 - 2\rho\big) r^{N-1}\,.
\end{equation}
By definition, and keeping in mind~(\ref{final3}), we have then
\begin{equation}\label{useV}\begin{split}
\H^{N-1}\bigg( &\partial^* \Fot\cap Q_r^R\bigg)
=\H^{N-1}\bigg( \partial^* \Fot\cap  \big( T\times(-R,R)\big)\bigg)\\
&\leq \H^{N-1}\Big(\partial^* \F_j \cap \big(T \times (-R,R)\big)\Big)\\
&=\H^{N-1}\Big(\partial^* \F_j \cap Q_r^R\Big) - \H^{N-1}\Big(\partial^* \F_j \cap \big((Q_r\setminus T)\times (-R,R)\big)\Big)
\leq 3\rho r^{N-1}\,.
\end{split}\end{equation}
Since $\rho$ is very small, by the relative isoperimetric inequality the above bound implies
\begin{equation}\label{useV'}
\H^{N-1}\Big( \partial^* \big(\Fot  \cap Q_r^R\big)\Big) \leq C_N \rho r^{N-1}
\end{equation}
for some purely dimensional constant $C_N$ (which is elementary to calculate, but whose exact value plays no role here). And then, by the standard isoperimetric inequality we deduce
\[
\H^N\Big(\Fot  \cap Q_r^R\Big) \leq \frac{\big(C_N \rho r^{N-1}\big)^{\frac N{N-1}}}{N^{\frac N{N-1}} \omega_N^{\frac 1{N-1}}}
=C_N' \rho^{\frac N{N-1}} r^N\leq C_N' \rho r^N
\]
with a suitable, purely dimensional constant $C_N'$. This ensures the validity of~(\ref{othersmall}), and thus concludes this step.

\step{V}{Definition of the cluster $\F_j'$ and validity of~(\ref{smallness}).}
Now, we can specify the value of the constant $\rho$. Namely, we let $\theta\leq \sigma$ be a small constant, to be specified in the next step, and we choose $\rho$ corresponding to $\theta$ by means of Proposition~\ref{flattening}, possibly reduced so that
\begin{align}\label{choicerho}
C_N' \rho \leq \frac 1{(6d^+)^N}\,, && 2^{N-1} (C_N')^{\frac{N-1}N}  N\omega_N^{1/N} m^{1/N} \rho^{\frac{N-1}N}\leq \sigma\,,
\end{align}
where $d^+$ is the constant given by~(\ref{defd+}). We aim now to apply Proposition~\ref{flattening} to the set $F_{j,\bar q}$; notice that this is possible because~(\ref{hyp1}) is valid by~(\ref{final3}), since $\partial^* F_{j,\bar q}\subseteq \partial^* \F_j$, and~(\ref{hyp2}) is valid by~(\ref{final1}) and~(\ref{final2}). We obtain then $\bar t\in [-\rho R-\theta r, \rho R+\theta r]$ and $r/2\leq r^-\leq r$ such that, calling
\[
F'_{j,\bar q}= \big(F_{j,\bar q} \setminus Q_{r^-}^R\big) \cup \big\{ z + t \nu,\, z\in Q_{r^-},\, -R<t<\bar t\big\}\,,
\]
one has
\begin{align}\label{from4.4}
\H^N\big(F_{j,\bar q} \Delta F_{j,\bar q}' \big)\leq \theta r^N\,, && \H^{N-1}\Big( \partial^* F_{j,\bar q}' \cap \partial Q_{r^-}^R\Big)\leq \theta\, r^{N-1}\,, && \H^N(
F_{j,\bar q}) = \H^N(F_{j,\bar q}')\,.
\end{align}
We can now define a modified cluster $\F_j'$; namely, while $F_{j,\bar q}'$ has been just defined, for every $q\in \{1,\, 2,\, \dots\,,\, m\}\setminus\{\bar q\}$ we set $F_{j,q}'=F_{j,q}\setminus Q_{r^-}^R$. We also call $v_j'=|\F_j'|$. Notice that, since for every $q\neq \bar q$ we have
\[
|v_{j,q}^- - v_{j,q}'| = \big| |F_{j,q}| - |F_{j,q}'|\big| = \big| F_{j,q} \cap Q_{r^-}^R\big|\,,
\]
by~(\ref{othersmall}) we get
\begin{align}\label{smallness}
\sum_{q\neq \bar q} |v_{j,q}^- - v_{j,q}'|
= \H^N\Big(\Fot \cap Q_{r^-}^R\Big)
\leq C_N' \rho r^N\leq 2^N C_N' \rho (r^-)^N\,, && \hbox{while} && v_{j,\bar q}'=v_{j,\bar q}^-\,.
\end{align}
Notice that by construction, keeping in mind that $\bar t+r\leq \rho R+(\theta+1) r<R$ since $\theta$ and $\rho$ are small and $R=nr\geq 2r$, we have
\begin{equation}\label{emptyint}
\F_j' \cap \big(Q_{r^-}\times (\bar t,\bar t + r)\big)
\subseteq \F_j' \cap \big(Q_{r^-}\times (\bar t,R)\big)=\emptyset\,.
\end{equation}
We conclude this step by showing that
\begin{gather}
\H^{N-1}\Big(\partial^* \F_j'\cap Q_{r^-}^R\Big)= (r^-)^{N-1}\,,\label{useVI}\\
\H^{N-1}\Big(\partial^* F_{j,\bar q}\cap Q_{r^-}^R\Big)\geq \big(1-2^N\rho\big) (r^-)^{N-1}\,,\label{useVI'}\\
\H^{N-1}\Big( \partial^* \big(\Fot  \cap Q_{r^-}^R\big)\Big) \leq 2^{N-1} C_N \rho (r^-)^{N-1}\,.\label{useVI''}
\end{gather}
The property~(\ref{useVI}) is true by construction. The property~(\ref{useVI'}) can be obtained arguing as in~(\ref{torepeat}); indeed, keeping in mind~(\ref{Tsmall}) and since $r\leq 2r^-$, we have
\[\begin{split}
\H^{N-1}\Big(\partial^* F_{j,\bar q}\cap Q_{r^-}^R\Big) &\geq
\H^{N-1}\Big( \partial^* F_{j,\bar q} \cap \big((Q_{r^-}\setminus T)\times (-R,R)\big)\Big)\geq \H^{N-1}(Q_{r^-}\setminus T)\\
&\geq (r^-)^{N-1}-\H^{N-1}(T)
\geq (r^-)^{N-1}-2\rho r^{N-1}\geq \big(1 - 2^N\rho\big) (r^-)^{N-1}\,.
\end{split}\]
To obtain~(\ref{useVI''}), we first write
\[\begin{split}
\H^{N-1}\Big(\partial^* \F_j\cap \big(Q_r^R\setminus Q_{r^-}^R\big)\Big) &\geq \H^{N-1}\big((Q_r\setminus Q_{r^-})\setminus T\big)\\
&=r^{N-1}-(r^-)^{N-1}-\H^{N-1}\big(T\cap(Q_r\setminus Q_{r^-})\big)\,.
\end{split}\]
Hence, by~(\ref{final3}) we get
\[\begin{split}
\H^{N-1}\Big(\partial^*\F_j\cap Q_{r^-}^R\Big) &= \H^{N-1}\Big(\partial^*\F_j\cap Q_r^R\Big) - \H^{N-1}\Big(\partial^*\F_j\cap \big(Q_r^R\setminus Q_{r^-}^R\big)\Big)\\
&\leq \rho r^{N-1}+(r^-)^{N-1}+\H^{N-1}\big(T\cap(Q_r\setminus Q_{r^-})\big)\,.
\end{split}\]
Arguing again as in~(\ref{torepeat}), we have
\[
\H^{N-1}\Big(\partial^* \F_j \cap \big((Q_{r^-}\setminus T)\times (-R,R)\big)\Big)\geq \H^{N-1}(Q_{r^-}\setminus T) = (r^-)^{N-1} - \H^{N-1}(T \cap Q_{r^-})\,.
\]
Arguing as in~(\ref{useV}) and again keeping in mind~(\ref{Tsmall}) and the fact that $r\leq 2r^-$, the last two estimates give then
\[\begin{split}
\H^{N-1}\bigg( &\partial^* \Fot\cap Q_{r^-}^R\bigg)
\leq \H^{N-1}\Big(\partial^* \F_j \cap Q_{r^-}^R\Big) - \H^{N-1}\Big(\partial^* \F_j \cap \big((Q_{r^-}\setminus T)\times (-R,R)\big)\Big)\\
&\leq \rho r^{N-1}+\H^{N-1}(T)\leq 3\rho r^{N-1} \leq 3\cdot 2^{N-1} \rho (r^-)^{N-1}\,.
\end{split}\]
And finally, exactly as in~(\ref{useV'}), by the relative isoperimetric inequality the last bound implies~(\ref{useVI''}).

\step{VI}{Definition of $\E_j'$ and $\E_j''$ and conclusion.}
We are finally in position to conclude. Keeping in mind that $\F_j$ was defined by $\E_j^+$ by~(\ref{defone}), we would like to set
\begin{equation}\label{defej'}
\E_j'=\varphi_i\Big(\omega_j^{1/N} \big(\F_j'- x_j\big) \Big)\,.
\end{equation}
Let us check that this definition makes sense. First of all, we observe that by construction $\F_j\Delta \F_j'$ is contained in $Q_R^R$, which is a cube with side $R$ which surely intersects $\F_j$, for instance by~(\ref{new1}). Therefore, calling for a moment $\G_j=\omega_j^{1/N} \big(\F_j'- x_j\big)$, we have that $\varphi_i^{-1}(\E_j^+)\Delta \G_j$ is contained in a cube of side $\omega_j^{1/N} R$ which intersects $\varphi_i^{-1}(\E_j^+)$. As shown in Step~I, the sets $\varphi_i^{-1}(\E_j^+)$ have diameters which go to $0$ and they converge to a point $\hat x\in U_i$. Since $U_i$ is open and $\omega_j\to 0$, this implies that $\G_j$ is compactly contained in $U_i$ for every $j$ large enough. Therefore, the cluster $\E_j'=\varphi_i(\G_j)$ is well-defined as soon as $j$ is large enough, and then the definition~(\ref{defej'}) makes sense. Moreover, by the construction of Step~V we know that for every $q\neq \bar q$ the set $F_{j,q}'$ is contained in $F_{j,q}$, thus minding~(\ref{defone}) and~(\ref{defej'}) we have also that $E_{j,q}'$ is contained in $E_{j,q}^+$; in addition, by~(\ref{smallness})
\[
\sum_{q\neq \bar q} \big| E^+_{j,q}\setminus E_{j,q}'  \big|
= \omega_j \H^N\Big(\bigcup\nolimits_{q\neq \bar q} F_{j,q}  \cap Q_{r^-}^R\Big)
\leq C_N' \omega_j \rho r^N\,.
\]
We now want to estimate $P(\E_j')-P(\E_j)$. From now on, we need to use that the manifold is Riemannian, and not just of fixed type. As a consequence, we know that $1-\sigma\leq h_i(x,\nu)\leq 1+\sigma$ for every $x\in U_i$ and $\nu\in\S^{N-1}$. By comparing~(\ref{defej'}) and~(\ref{defone}), and keeping in mind the construction of $\F_j'$, we have then
\[
\frac{P(\E_j')-P(\E_j^+)}{\omega_j^{\frac{N-1}N}} \leq (1+\sigma) \H^{N-1}\Big(\big(\partial^* F'_{j,\bar q}\cap \overline{Q_{r^-}^R}\big)\cup \partial^* \big(\Fot  \cap Q_{r^-}^R\big)\Big)-(1-\sigma)\H^{N-1}\big(\partial^* F_{j,\bar q}\cap \overline{Q_{r^-}^R}\big)\,.
\]
Putting together~(\ref{useVI''}), (\ref{useVI}), and the second inequality in~(\ref{from4.4}), we have
\[\begin{split}
\H^{N-1}\Big(\big(\partial^* &F'_{j,\bar q}\cap \overline{Q_{r^-}^R}\big)\cup \partial^* \big(\Fot  \cap Q_{r^-}^R\big)\Big)\\
&\leq \H^{N-1}\Big(\partial^* F'_{j,\bar q}\cap Q_{r^-}^R\Big) +\H^{N-1}\Big(\partial^* F'_{j,\bar q}\cap \partial Q_{r^-}^R\Big) + 2^{N-1} C_N \rho (r^-)^{N-1}\\
&\leq (r^-)^{N-1}+ \theta r^{N-1} + 2^{N-1} C_N \rho (r^-)^{N-1}
\leq \Big( 1 + 2^{N-1}\big(\theta + C_N \rho\big)\Big) \, (r^-)^{N-1}\,,
\end{split}\]
while~(\ref{useVI'}) gives
\[
\H^{N-1}\Big(\partial^* F_{j,\bar q}\cap \overline{Q_{r^-}^R}\Big)\geq \big(1-2^N\rho\big) (r^-)^{N-1}\,,
\]
and then the above inequality implies
\begin{equation}\label{iniz}\begin{split}
\frac{P(\E_j')-P(\E_j^+)}{\omega_j^{\frac{N-1}N}} &\leq 
\Big((1+\sigma) \big( 1 + 2^{N-1}\big(\theta + C_N \rho\big)\big) 
-(1-\sigma) \big(1-2^N\rho\big) \Big)(r^-)^{N-1}\\
&\leq \Big(2\sigma + 2^N\big(\theta+(C_N+1)\rho\big)\Big) (r^-)^{N-1}
\leq 2^N(C_N+3)\sigma (r^-)^{N-1}\,,
\end{split}\end{equation}
where the last inequality is true since by construction $\rho\leq \theta\leq \sigma$. We want to define a competitor $\E_j''$ for $\E_j$. Keeping in mind~(\ref{defej'}), we will simply set
\begin{equation}\label{defej''}
\E_j''=\varphi_i\Big(\omega_j^{1/N} \big(\F_j''- x_j\big) \Big)\,,
\end{equation}
where $\F_j''=\F_j' \cup \widetilde \F$ and $\widetilde \F$ will be a suitable cluster with empty intersection with $\F_j'$. Since the volume of $\E_j''$ must equal that of $\E_j$, and since $|\E_j|=\omega_j v_j$ and $|\E_j'|=\omega_j v_j'$, we must have $|\widetilde \F|= v_j-v_j'$. We define now the cluster $\widetilde\F$ as a minimizer of the relative perimeter $P(\cdot;\R^N_{+,\nu})$ among all the clusters of volume $v_j-v_j'$, where $\nu$ is the normal vector to $F_{\bar q}$ at $\bar x$. By~(\ref{xiauxpro}), we know that
\begin{equation}\label{aate}
P(\widetilde\F;\R^N_{+,\nu}) \leq (1-\xi) J\big(v_j-v_j'\big)\,.
\end{equation}
Let us then call $\Gamma=\partial^* \widetilde\F \cap \R^N_{+,\nu}$, so that $P(\widetilde\F;\R^N_{+,\nu})=\H^{N-1}(\Gamma)$. Thanks to~(\ref{defd+}), keeping in mind that $r=R/n$ where $n\geq 2$ has been defined by~(\ref{setRr}), and by~(\ref{smallness}), we have then
\[
\diam(\widetilde \F) \leq d^+ |v_j-v_j'|_1^{1/N}
\leq d^+ |v_j-v_j^-|_1^{1/N} + d^+ |v_j^--v_j'|_1^{1/N}
\leq \frac r 3 + 2 d^+ (C_N' \rho)^{1/N}  r^- \leq r^-\,,
\]
where the last inequality is true thanks to~(\ref{choicerho}), left. This diameter estimate has a crucial consequence. Indeed, this implies that the ``flat part'' of $\partial\widetilde\F$ is contained in $Q_{r^-}\times \{\bar t\}$. Let us be more precise. We can make a translation in such a way that $\partial^*\widetilde\F\setminus\Gamma \subseteq Q_{r^-}\times \{\bar t\}$, and by the diamester estimate we also have that $\widetilde\F$ is contained in the cube $Q_{r^-}\times [\bar t,\bar t+ r^-]$. Since by~(\ref{emptyint}) this cube has no intersection with $\F_j'$, we have that $\widetilde\F\cap\F_j'=\emptyset$ as required; thus, definition~(\ref{defej''}) makes sense. Now, notice carefully that, by construction, the ``flat part'' $\partial^*\widetilde\F\setminus\Gamma$ is contained in $\partial^*\F_j'$. As a consequence, the difference $P(\E_j'')-P(\E_j')$ is only given by the ``new boundary'' coming from $\Gamma$. More precisely, using again that the oscillation of the atlas is less than $\sigma$ and~(\ref{aate}), we have
\[
\frac{P(\E_j'')-P(\E_j')}{\omega_j^{\frac{N-1}N}} \leq (1+\sigma) \H^{N-1}(\Gamma)
\leq (1+\sigma) (1-\xi) J\big(v_j-v_j'\big)\,.
\]
Now, since the functional $J$ is subadditive, again thanks to~(\ref{smallness}) we have
\[\begin{split}
J(v_j-v_j') &\leq J(v_j - v_j^-) + J(v_j^--v_j')
\leq J(v_j - v_j^-) + N\omega_N^{1/N} m^{1/N} |v_j^--v_j'|_1^{\frac{N-1}N}\\
&\leq J(v_j - v_j^-) + \sigma (r^-)^{N-1}\,,
\end{split}\]
where the last inequality is true thanks to~(\ref{choicerho}), right. Putting the last two estimates together with~(\ref{iniz}), we have then
\begin{equation}\begin{split}\label{wirkend}
\frac{P(\E_j'')-P(\E_j^+)}{\omega_j^{\frac{N-1}N}} &\leq 2^N(C_N+3)\sigma (r^-)^{N-1}+(1+\sigma) (1-\xi) \big(J(v_j - v_j^-) + \sigma (r^-)^{N-1}\big)\\
&\leq 2^N(C_N+4)\sigma (r^-)^{N-1}+(1+\sigma-\xi) J(v_j - v_j^-)\,.
\end{split}\end{equation}
The last observation to be done is that $\E_j$ is the union of finitely many connected components; the largest is $\E_j^+$, and all the others, together, have volume $\omega_j(v_j-v_j^-)$. Therefore,
\[
P(\E_j)= P(\E_j^+) + P(\E_j\setminus \E_j^+) \geq P(\E_j^+) + (1-\sigma)\omega_j^{\frac{N-1}N} \, J(v_j-v_j^-)\,.
\]
This estimate and~(\ref{wirkend}) imply that
\begin{equation}\label{wirkond}
\frac{P(\E_j'')-P(\E_j)}{\omega_j^{\frac{N-1}N}} \leq 2^N(C_N+4)\sigma (r^-)^{N-1}-(\xi-2\sigma) J(v_j - v_j^-)\,.
\end{equation}
At last, we notice the obvious estimate
\[
J(v) \geq N\omega_N^{1/N} |v|_1^{\frac{N-1}N}\,,
\]
which in particular, again recalling that $r=R/n$ with $n\geq 2$ identified by~(\ref{setRr}), gives
\[
J(v_j-v_j^-) \geq N\omega_N^{1/N} |v_j-v_j^-|_1^{\frac{N-1}N}\geq N\omega_N^{1/N} \frac {(r^-)^{N-1}}{(5d^+)^{N-1}}\,.
\]
We can then finally specify the value of $\sigma$. Indeed, we can take $\sigma<4^{-N}$ as any constant satisfying
\begin{equation}\label{defsigma}
2^N(C_N+4)\sigma \leq (\xi-2\sigma)\, \frac {N\omega_N^{1/N}}{(5d^+)^{N-1}} \,.
\end{equation}
Notice that such constant $\sigma$ is defined only depending on the dimension $N$. Since this inequality, inserted in~(\ref{wirkond}), gives $P(\E_j'')< P(\E_j)$, we have finally obtained the required contradiction, and the thesis is concluded.
\end{proof}

\section{Asymptotic Equivalence of the Isoperimetric Profiles and final remarks\label{secfin}}

In this last section we give a proof of Theorem~\ref{AsymptoticTheorem} and present some final remarks. In fact, Theorem~\ref{AsymptoticTheorem} is a simple consequence of Theorem~\ref{smalldiam}, without making use of Theorems~\mref{main1} or~\mref{main2}.

\begin{proof}[Proof of Theorem~\ref{AsymptoticTheorem}]
We directly consider the general case of a Finsler manifold and prove property~(\ref{eq12bis}), since then the Riemannian case and property~(\ref{eq12}) are just a particular case, keeping in mind that for a Riemannian manifold one has $C_2^-=C_2^+=1$.

Let us start by calling $d_{\R^N}$ the maximal diameter for $\R^N$ with Euclidean metric, according to Definition~\ref{defmadi}. Then, we fix a small number $\sigma>0$, and we take any atlas satisfying~(\ref{choiceatlas}) and with oscillation smaller than $\sigma$; moreover, we call $\hat r$ the largest radius of the charts $U_i$ (which are finitely many balls in $\R^N$). Let us apply Theorem~\ref{smalldiam}, obtaining a number $\bar\eps_1$; up to replace it with a smaller constant, we can assume that $\bar\eps_1^{1/N} d_{\R^N} < \hat r$.\par

Let now $\F\subseteq\R^N$ be any cluster with total volume $\|\F\|<\bar\eps_1$, and which is optimal in $\R^N$ with the Euclidean metric, thus $P_{\rm Eucl}(\F)=J_{\R^N}(v)$ where $v=|\F|$ and by $P_{\rm Eucl}$ we denote the standard Euclidean perimeter of sets of clusters in $\R^N$. By definition, the diameter of $\F$ is less than $d_{\R^N} |v|^{1/N}< \hat r$; thus, up to a translation, we can assume that $\F$ is compactly contained in some $U_i$. Let us then call $\E = \varphi_i(\F)$, which is a cluster in $M$ with volume $|\E|_M=v$. For any $1\leq j\leq m$, keeping in mind the definition~(\ref{defC_2^+}) of $C_2^+$ and Definition~\ref{defosc} of the oscillation, we have
\begin{equation}\label{pocofa}\begin{split}
P_M(E_j) &= \int_{\partial^* F_j} h_i\big(x,\nu_{F_j}(x)\big)\,d\H^{N-1}(x)
=\int_{\partial^* F_j} \| \nu_{F_j}(x)\|^*_{i,x}\,d\H^{N-1}(x)\\
&\leq (1+\sigma) \int_{\partial^* F_j} \| \nu_{F_j}(x)\|^*_{\varphi_i(x)}\,d\H^{N-1}(x)
 \leq (1+\sigma) C_2^+ P_{\rm Eucl}(F_j) \,.
\end{split}\end{equation}
Since of course the same estimate is valid with $\cup_{j=1}^m E_j$, recalling that the perimeter of the clusters is given by~(\ref{perclu}) we deduce
\[
J_M(v) \leq P_M(\E) \leq (1+\sigma) C_2^+ P_{\rm Eucl}(\F) = (1+\sigma) C_2^+ J_{\R^N} (v)\,.
\]
Conversely, let $\E$ be an optimal cluster in the manifold $M$ having total volume $\|\E\|\leq \bar\eps_1$, and call $v=|\E|$. Lemma~\ref{fmcc} ensures that $\E$ is done by finitely many connected components, say $\E=\bigcup_{\ell=1}^H \E_\ell$ for some $H\in\N$. Moreover, by Theorem~\ref{smalldiam} we know that each connected component is compactly contained in a chart, so for every $1\leq \ell \leq H$ there exists $i(\ell)$ such that $\E_\ell \comp V_{i(\ell)}$. Calling then $\F_\ell = \varphi_{i(\ell)}^{-1}(\E_\ell)$, arguing exactly as in~(\ref{pocofa}) using this time the definition~(\ref{defC_2^-}) of $C_2^-$, we obtain
\[
P_M(\E_\ell) \geq \frac {1-\sigma}{C_2^-}\, P_{\rm Eucl}(\F_\ell)\,,
\]
which adding over $\ell$ gives
\[
J_M(v) = P_M(\E) = \sum_{\ell=1}^H P_M(\E_\ell) 
\geq \frac {1-\sigma}{C_2^-}\, \sum_{\ell=1}^H  P_{\rm Eucl}(\F_\ell)
= \frac {1-\sigma}{C_2^-}\, P_{\rm Eucl}(\F) 
\geq \frac {1-\sigma}{C_2^-}\, J_{\R^N}(v) \,.
\]
Summarizing, for every $\sigma>0$ we have found $\bar\eps_1$ such that, for every $v\in(\R^+)^m$ satisfying $|v|\leq \bar\eps_1$, one has
\[
\frac {1-\sigma}{C_2^-}\leq \frac{J_M(v)}{ J_{\R^N}(v) } \leq (1+\sigma) C_2^+ \,,
\]
and this concludes the proof of~(\ref{eq12bis}).
\end{proof}

\subsection{Final remarks\label{afn}}

We conclude now with a few important remarks. The first one deals with an immediate extension of Theorem~\ref{AsymptoticTheorem}.

\begin{remark}
In Theorem~\ref{AsymptoticTheorem} we have considered the case of a Riemannian manifold and the case of a Finsler manifold. In order to keep the claim short, we have not explicitely considered the ``intermediate'' case of a fixed-norm manifold (see Definition~\ref{fnm}). However, it is clear that the very same proof gives also that, if $M$ is a fixed-norm corresponding to the norm $\|\cdot\|^*$, then
\[
\frac{J_M(v)}{J^{\|\cdot\|^*}_{\R^N}(v)} \to 1\qquad \hbox{as}\ |v|\to 0\,,
\]
where by $J_{\R^N}^{\|\cdot\|^*}$ we denote the multi-isoperimetric profile of $\R^N$ endowed with the norm $\|\cdot\|^*$.
\end{remark}

The second remark deals with another immediate extension, this time of Theorem~\mref{main2}.

\begin{remark}\label{easy}
We can consider the natural notion of an ``almost Riemannian'' manifold as follows. We say that the manifold $M$ is \emph{almost Riemannian (with parameter $\delta$)} if there is a constant $\delta>0$ such that for any $p\in M$ and any $\nu\in\R^N$ one has that
\[
1-\delta\leq \frac{\|\nu\|^*_p}{|\nu|} \leq 1+\delta\,.
\]
Of course this property is interesting only for a small value of the parameter $\delta$, and a manifold is Riemannian if and only if it is almost Riemannian with parameter $\delta$ for every positive number $\delta$. A quick look at the proof of Theorem~\mref{main2} ensures that the claim is true not only for a Riemannian manifold, but also for a manifold which is almost Riemannian with sufficiently small parameter. In fact, the fact that $M$ was a Riemannian manifold was only used to be sure that, inside a chart $U_i$ of an atlas of oscillation $\sigma$, one has $1-\sigma\leq h_i(x,\nu)\leq 1+\sigma$ for every $x\in U_i$ and $\nu\in\S^{N-1}$. However, the same inequality is clearly true also if the manifold is almost Riemannian with a parameter $\delta<\sigma/3$ and provided that we use an atlas with oscillation $\sigma/3$; then the very same proof works also in this more general case.
\end{remark}

The next remark ensures that a full extension of Theorem~\mref{main2} is false.

\begin{remark}
It could seem reasonable to guess that Theorem~\mref{main2} remains true for any Finsler manifold, but this is actually false. In fact, thanks to a counterexample contained in~\cite{CarazzatoPratelli2025}, there exists a compact Finsler manifold inside which there are non connected minimal clusters with arbitrarily small total volume. However, in that example $m$ is quite large and the connected components are just two, so this leaves space for further investigation. In particular, it is not clear to us whether the connectedness of small minimal clusters in generic compact Finsler manifolds is true in the case $m=2$.
\end{remark}

The last remark deals again with a possible extension of Theorem~\mref{main2}.

\begin{remark}
We believe that Theorem~\mref{main2} remains true for the more general case of a fixed-norm manifold (and then, arguing as in Remark~\ref{easy}, for any ``almost fixed-norm manifold''). In fact, in most of the construction of Section~\ref{secma2} we were able to use a generic fixed-norm manifold, and not necessarily a Riemannian one. The assumption that $M$ is actually Riemannian has been used only in the proof of Theorem~\mref{main2}, in few (but important) points.
\end{remark}

\section*{Acknowledgemnts}
The authors acknowledge the hospitality of the Universidade Federal do ABC and of the University of Pisa. The work of SN was partially supported by FAPESP Aux\~alio Jovem Pesquisador \#2021/05256-0, CNPq  Bolsa de Produtividade em Pesquisa 1D \#12327/2021-8, 23/08246-0, Geometric Variational Problems in Smooth and Nonsmooth Metric Spaces \#441922/2023-6. The work of AP was partially supported by the PRIN 2022 project \#2022E9CF89.

\bibliographystyle{abbrv}
\bibliography{bibliography}

\end{document}